\numberwithin{equation}{section}
\theoremstyle{plain}
\newtheorem{thm}{Theorem}[section]
\newtheorem{prop}[thm]{Proposition}
\newtheorem{lem}[thm]{Lemma}
\newtheorem{rem}[thm]{Remark}
\def\moverlay{\mathpalette\mov@rlay}
\def\mov@rlay#1#2{\leavevmode\vtop{%
   \baselineskip\z@skip \lineskiplimit-\maxdimen
   \ialign{\hfil$\m@th#1##$\hfil\cr#2\crcr}}}
\newcommand{\charfusion}[3][\mathord]{
    #1{\ifx#1\mathop\vphantom{#2}\fi
        \mathpalette\mov@rlay{#2\cr#3}
      }
    \ifx#1\mathop\expandafter\displaylimits\fi}
\title{Distinguishing curve types and designer metrics}
\author{Ara Basmajian}
\address[Ara Basmajian]{The Graduate Center, CUNY \\ 365 Fifth Ave., N.Y., N.Y., 10016 and Hunter College, CUNY \\ 695 Park Ave., N.Y., N.Y., 10065, USA}
\email{abasmajian@gc.cuny.edu}
 \thanks{A.B. Partially supported by a PSC-CUNY Grant and Simons Collaboration Grant (359956, A.B.)}
\author{Sayantika Mondal}
\address[Sayantika Mondal]{The Graduate Center, CUNY \\ 365 Fifth Ave., N.Y., N.Y., 10016}
\email{}
\thanks{S.M. Partially supported by a Provost’s Pre-Dissertation Science Research Fellowship}
\keywords{curve type, intersection number, geodesic length, optimal metric}
\subjclass[2020]{Primary 32G15, 37D40,  57K20; Secondary 30F60, 37E35, 53C22}
\begin{document}
\begin{abstract}
Let $\gamma$ be a filling curve on a topological surface $\Sigma$  of genus $g \geq 2$. The inf invariant of $\gamma$, denoted $m_{\gamma}$,  is the infimum of the length function on the space of marked hyperbolic structures on $\Sigma$. This infimum is realized at a unique hyperbolic structure,  $X_{\gamma}$, which we call the optimal metric associated to $\gamma$. In this paper, we investigate properties of the inf invariant and its associated optimal metric. Starting from a filling curve and a separating curve, we construct a two integer parameter family of curves for which we derive coarse length bounds and qualitative properties of their associated optimal metrics. In particular, we show that there are infinitely many pairs of filling curves, each pair having distinct $\text{inf}$ invariants but the same self-intersection number.

The inf invariants give rise to a natural spectrum, we call the 
inf spectrum, associated to  the moduli space of the surface. We provide coarse bounds for this spectrum.

\end{abstract}

\maketitle

%%%%%%%Introduction%%%%%%%%%%%%%%%%

\section{Introduction}
\label{sec: intoduction}

There are many topological invariants  one can associate with homotopy classes of closed curves. These include algebraic and geometric self-intersection numbers, intersection numbers  with curves in a class of curves (for example, simple ones),  the Goldman bracket, the number and type of complementary components, mapping class group stabilizers, and many others. How these invariants interact and determine the curve type (mapping class group orbit) is an active area of research.
In this paper, we begin an investigation of  the {\it \text{inf}  invariant}  (shortest length metric) associated to a filling curve,  its relationship with the  geometric self-intersection number,  and its relation to the optimal metric that is tailored to produce the minimum length.  While clearly the geometric self-intersection number is a type  invariant,   one of the questions we address is if the inf invariant can distinguish between curves that have the same self-intersection number.  Starting from a filling curve and a separating curve, we construct a two integer parameter family of curves for which we derive coarse length bounds and qualitative properties of their associated optimal metrics. In particular, we show that there are infinitely many pairs of filling curves, each pair having distinct $\text{inf}$ invariants but the same self-intersection number.  These invariants associated with each filling curve of the surface lead naturally to a spectrum, we call the {\it inf spectrum}, associated to the moduli space of the surface. We derive coarse bounds for this spectrum.

Let $\Sigma$ be a closed topological surface of genus $g \geq 2$, $T(\Sigma)$ its associated  Teichm\"uller space of marked hyperbolic structures of finite area,  and
 $\mathcal{M}(\Sigma)$  the moduli space of complete finite area hyperbolic structures on $\Sigma$. For   $\gamma$ a closed curve on 
$\Sigma$,  let   
$\ell_{\gamma}: T(\Sigma) \rightarrow \mathbb{R}_{\geq 0}$ be the length function of $\gamma$.   We define the $\text{inf}$ invariant of $\gamma$ to be,
$$
m_{\gamma}:=
\text{inf} \{\ell_{\gamma}(X): X \in T(\Sigma) \}.
$$
 This infimum is achieved at a unique marked hyperbolic structure denoted $X_{\gamma}$. We say that $X_{\gamma}$
is the {\it optimal metric}  associated to $\gamma$.

\vskip5pt
\noindent{\bf Theorem A.} (Theorems \ref{thm: sepoptimalmetric} and \ref{thm: separating curve construction})
{\it  Given a  closed surface of genus $g \geq 2$,  there exists an  infinite set of positive integers
 $\mathcal{K}$ so that for each 
 $k \in \mathcal{K}$  there are  a  pair of  curves 
 $(\alpha_{k}, \beta_{k})$ for which 
\begin{itemize}
\item $\alpha_{k}$ and $\beta_{k}$ are each  filling
\item  $\alpha_{k}$ and $\beta_{k}$  each have   self-intersection number $k$
\item  $m_{\alpha_k}\lesssim \log k < \sqrt{k}\lesssim  m_{\beta_k}$
\item the optimal metrics $\{X_{\alpha_{k}}: {k \in \mathcal{K}}\}$ go to the boundary stratum of 
$\mathcal{M}(\Sigma)$ corresponding to pinching   a  simple curve 
\item the optimal metrics $\{X_{\beta_{k}}: {k \in \mathcal{K}} \}$ stay in a compact part of $\mathcal{M}(\Sigma)$. \end{itemize}}

The construction we use to prove  Theorem A holds even if the surface has punctures as long as the Euler characteristic is less than -1, that is,  provided $\Sigma$ has an essential non-peripheral separating curve.  This will be investigated in a forthcoming paper by the second author.

Let $\gamma_{0}$ be a minimal filling curve (that is,
has self-intersection number $2g-1$), and let $\eta$ be a simple separating curve that intersects $\gamma_{0}$ at 2 points (see Lemmas 
\ref{lem: minimalfillingcurve} and \ref{lem:minimalandseparating} for more details). 
The curve $\eta$ separates $\Sigma$ into the two surfaces $\Sigma_{1}$ and $\Sigma_{2}$, and  
if $X \in \mathcal{T}(\Sigma)$ then we denote  the hyperbolic structure $X$ restricted to $\Sigma_{i}$ by 
$X_{i}$ for $i=1,2$.
In order to prove Theorem A,
we use $\gamma_{0}$ and $\eta$ to  construct a two  parameter curve family 
$\{\gamma_{m,n} :m,n \in \mathbb{N}\}$. 
We compute the self-intersection number of $\gamma_{m,n}$ and derive coarse length bounds. Namely

\vskip5pt
\noindent{\bf Theorem B.} (Proposition \ref{prop: separating} and Lemma \ref{lem:etagoestozero}) {\it 
Fix a point $p \in \Sigma$. let $\gamma_{m,n} \in \pi_1 (\Sigma,p)$
be the curve $\eta^{m}\ast  \gamma_{0}^{n}$.
\begin{enumerate}
\item 
  Fix $\epsilon >0$.
For all $X \in  \mathcal{T}_{\epsilon}(\Sigma)$, we have
\begin{equation*}
c_{1}n+c_{2}m + c_{3} 
\leq 
\ell_{\gamma_{m,n}}(X) 
\leq 
c_{4}n + c_{5} m +c_{6}
\end{equation*}
\item
For $n$ bounded above, $\ell_{\eta}(X_{\gamma_{m,n}}) \rightarrow 0$, as $m \rightarrow \infty$.
\item For $m$ bounded above, 
$\liminf_{n \rightarrow \infty} \ell_{\eta}(X_{\gamma_{m,n}})>0$
\end{enumerate}
where  $c_{i}=c_{i}(X)$ for $i=1,2,...,6$ is an explicit  positive constant that only depends on $X$.}

\vskip5pt

In  Theorem B
 
$$\mathcal{T}_{\epsilon} (\Sigma):=
\bigg\{X \in \mathcal{T}(\Sigma): \text{min} \big\{\textup{sys} (X_{1}),
\textup{sys}(X_{2})\big\}\geq \epsilon, 
\ell_{\eta} (X) \leq {1} \bigg\} $$
The notation $\textup{sys}(X_i)$ denotes the length of the shortest non-peripheral geodesic in $X_i$. 

Items (2) and (3) of Theorem B  are illustrative of  how the curve type influences its associated  optimal metric.
Namely, in item (2) of Theorem B the curve $\gamma_{m,n}$ is winding more and more around $\eta$ as $m$ gets large, forcing the length of $\eta$ with respect to the  optimal metric to be short. In item (3) of Theorem B the curve $\gamma_{m,n}$  intersects $\eta$ more and more as $n$ gets large, forcing the  length of $\eta$ with respect to the optimal metric 
to be bounded below.

We remark that the inf invariant, being a mapping class group invariant, is an invariant of the moduli space of the surface. On the other hand, there exist  distinct filling curve types for which their inf  invariants are the same.  For example, the well-known length equivalent curves \cite{leininger2003equivalent} can be realized as filling curves and thus are examples. In Arettines \cite{arettines2015geometry} a minimal intersecting filling curve is  explicitly found on a surface of genus $g$. Moreover,  the metric associated to this curve is explicitly realized  and the $\text{inf}$ invariant is given by $m_{\gamma}= (4g - 2)\text{arccosh}(2 \cos [\frac{\pi}{4g-2}] + 1)$. However in general,
given  a filling curve  $\gamma$  it is a difficult problem  to precisely determine  the 
$\text{inf}$   invariant  and the optimal metric  associated to $\gamma$. For this reason, we focus on coarse length bounds and qualitative properties of the optimal metric.

For $\gamma$ a filling curve, denote its orbit under the mapping class group  by $[\gamma]$, and 
its inf invariant by   $m_{[\gamma]}$. Associated to each topological surface,  we
 define the {\it \text{inf}  spectrum of } $\Sigma$  to be
the ordered multiset
$$
 \mathcal{I}Spec (\Sigma) := \{m_{[\gamma]}: \gamma \text{ filling on } \Sigma \}
 $$

 \vskip5pt 

\noindent{\bf Theorem C}. (Theorem \ref{thm: inf spec bounds})
  {\it  For any $L>0$, the set

$$\{ [\gamma]:\gamma \text{ filling on }\Sigma, m_{\gamma}\leq L \}  \text{ is finite.}$$
 More concisely,   there exists $L_{0}>0$ so that 

$$
e^{bL}
\leq
|\{ [\gamma]:\gamma \text{ filling on }\Sigma,m_{\gamma}\leq L \}|
\leq e^{ce^{L}} 
$$
for all $L \geq L_{0}$. Here $b, c >0$ are  explicit constants that only depend on the Euler characteristic $\chi(\Sigma)$.}

\vskip5pt
 
The finiteness in Theorem  C follows  from the fact that a closed geodesic  of length less than  $\frac{1}{2} \log{\frac{k}{2}}$ has  at most  $k$ self-intersections   
(\cite{basmajian2013universal}) and 
the  number of  mapping class orbits of curves with less than $k$ self-intersections is finite  
(\cite{cahn2018mapping}, \cite{sapir2016orbitsnonsimpleclosedcurves}).  To achieve the coarse bounds in Theorem C we use  results of Aougab and Souto  (\cite{souto2018counting}).

The fact that we consider only filling curves for the inf invariant is explained in Remark \ref{rem: not filling}. More quantitative versions of Theorems A and B  can be found in 
 Theorems \ref{thm: sepoptimalmetric}, \ref{thm: separating curve construction}, and Proposition \ref{prop: separating}. Some of the work in this paper goes beyond what is needed to prove the stated theorems in the introduction. For example, the coarse lengths on closed curves and arcs may be applied in various other contexts. We include these as they may be of independent interest. 

\subsection{Related results:}  
One can study  the relationship  between self-intersection  and length in various contexts including for non-filling curves.  One context is to fix the hyperbolic structure $X$ and ask for  the shortest closed geodesic with at least $k$ self-intersections. Next one can expand to considering all hyperbolic structures on a fixed topological surface, and lastly one can look for the shortest closed geodesic with $k$  self-intersections on any hyperbolic surface. For the relevant results see  
\cite{basmajianstable}, \cite{basmajian2013universal}, 
\cite{basmajian2017geometric}, 
\cite{cahn2018mapping},
\cite{ChasPhillipsoncepunctured}, \cite{Chasphillipstwice}, \cite{erlandsson2020short}, \cite{Gillmankeenwordsequences}, \cite{Malesteinputnam}, \cite{sapir2016boundnonsimp}, \cite{sertan} and \cite{torkaman2024intersectionpointsclosedgeodesics}.

In this paper, we fix a filling curve type and ask for the metric that yields the shortest length.  Papers involving metrics tailored to  curves and infimal length include \cite{aougabmetricsuited}, \cite{souto2018counting}, and 
\cite{Gasterinfima}.
Papers involving filling curves, their constructions, and their lengths include
\cite{aougab2022combinatoriallyrandomcurvessurfaces}, 
\cite{arettines2015geometry},
\cite{girondo2023minima}, 
 \cite{parlier2021topological},
 \cite{sapir2023lengthcomparisontheoremgeodesic},
 \cite{Voshortclosed},
\cite{wanglengthminoneholetorus}, 
and \cite{zhang2008hyperbolic}.

\subsection{Section plan and notation:}
\label{subsec: plan}

 The notation $f(k) \lesssim g(k)$ is used to mean that
there exist constants $c_1, c_2 >0$ such that
$f(k) \leq c_{1} g(k) + c_2$, for $k$ large.  Similarly,   $f(k) \gtrsim g(k)$ 
denotes $f(k) \geq  c_{1} g(k) -c_2$, for $k$ large. The constants in either one of these inequalities depend only on the topology of the surface and a fixed chosen filling curve. We use $\Sigma$ to denote a topological surface, and
a surface with a hyperbolic metric is usually denoted by a capital latin letter such as $X$ or $Y$. To avoid cluttered notation, we occasionally use a small letter to denote both a curve and its length. It  should be clear from the context.   In Table \ref{table:1}  we list commonly used definitions with the first section they appear in and their notation. 

Section 2 is devoted to elementary facts  on the topology and geometry of filling curves on hyperbolic surfaces. 
In Section 3 the inf invariant is introduced as well as results on orthogeodesics and necessary geometric lemmas that will be needed later to achieve bounds on various curve types.  In Section 4 we construct a two parameter curve family, compute self-intersection numbers,  and supply  coarse length bounds for the curves in the family. In Section 5 properties of the optimal metrics associated to the two parameter family of curves are derived  and in Section 6 we supply coarse bounds on   the infimal length of  curves in the family with the same self-intersection number. Finally, in Section 7 we give bounds for the growth of the inf spectrum and conclude with final remarks.

\begin{center} \begin{table}
	\begin{tabular}{|c|c|c|}
		\hline \hline 
		{\bf Definition} & {\bf Section} &{\bf Notation} \\
		\hline
		\hline 
		\hline
        2-parameter curve family, 
        $\eta^{m}\ast \gamma_{0}^{n}$& \ref{sec: intoduction} &$\gamma_{m,n}$\\
		\hline
        Systole length  of $X$ & \ref{sec: intoduction}& $\text{sys}(X)$\\
		\hline
		Asymptotic upper bound& \ref{subsec: plan} &$f(k) \lesssim g(k)$ \\
		\hline
		Asymptotic lower  bound& \ref{subsec: plan} &$f(k) \gtrsim g(k)$ \\
        \hline
		Euler Characteristic of $\Sigma$ & \ref{sec: Basics} &$\chi := \chi(\Sigma)$ \\
        \hline
        
		Standard  collar function& \ref{sec: Basics} &$r(\frac{x}{2})$ \\
		\hline
		Standard collar neighborhood& \ref{sec: Basics}  & $\mathcal{N}(\cdot)$ \\
		\hline
		Teichm\"uller Space&\ref{sec: Basics}  & $\mathcal{T}(\Sigma)$  \\
		\hline 
		
		Length function of $\gamma$ on Teichm\"{u}ller space & \ref{sec: Basics}  & $\ell_{\gamma}$  \\
		\hline
		Length function of $\gamma$ on moduli space & \ref{sec: Basics}  & $L_{\gamma}$  \\
		\hline
		Marked hyperbolic structure & \ref{sec: Basics} &$(X,f)$   \\
		\hline
		Mapping class group&\ref{sec: Basics}& $MCG(\Sigma)$\\
		\hline
		Intersection number& \ref{sec: Basics} & $i(\cdot ,\cdot)$\\
		\hline
        $\epsilon$-Teichm\"uller space& \ref{sec: Min  invariants} & $T_{\epsilon}(\Sigma)$  \\
	        \hline
		Inf  invariant& \ref{sec: Min  invariants} & $m_{\gamma}$ \\
		\hline
		Moduli space& \ref{sec: Min  invariants}& $\mathcal{M}:=\mathcal{M}(\Sigma)$  \\
		\hline
		Optimal metric for $\gamma$& \ref{sec: curve family} & $X_{\gamma}$ \\
        \hline
        \hline
	\end{tabular} \caption{Notations} \label{table:1}
\end{table}\end{center}

\section{Basics} \label{sec: Basics}

Let $\Sigma=\Sigma_{g,n}$ be an orientable topological surface 
of genus $g$ and $n$ points removed. We denote the Euler characteristic of $\Sigma$ by 
$\chi :=\chi (\Sigma)$. We assume that 
$\Sigma$ supports a hyperbolic structure with non-abelian  
fundamental group. We define the mapping class group as
 $\text{MCG}(\Sigma):=\text{Diffeo}^{+} (\Sigma)/\sim$, where
 $f \sim g$ if $f \circ g^{-1}$  is isotopic to the identity.

 \vskip10pt
 \noindent{\bf Topology:}
\vskip10pt
 
A closed curve  in $\Sigma$ is {\it peripheral}  if it bounds a punctured disc. The closed curve is {\it essential} if it is not homotopic to a point or is not peripheral. The {\it self-intersection number}  of a closed curve is the minimal number of intersections with itself  when in general position. If the underlying surface has a hyperbolic metric then the self-intersection number 
$i(\alpha, \alpha)$ is realized by the geodesic in the homotopy class of the curve. For a compact surface $\Sigma$ with boundary,  an {\it arc} is a 
 continuous mapping $\delta : [0,1] \rightarrow \Sigma$
where $\delta (0)$ and $\delta (1)$ are in $\partial \Sigma$.
An arc is {\it essential} if it is not homotopic 
(rel $\partial \Sigma$) to an arc on $\partial \Sigma$. 
For the purposes of this article, a {\it multi-curve}  ({\it multi-arc}) is a finite union  of curves (arcs), and the multi-curve (arc) is essential if its components are essential. We extend the definition of hyperbolic length to multi-curves and multi-arcs in the natural way.  A closed curve on 
$\Sigma$  is 
 {\it filling} if its complementary components are discs or punctured discs.  Equivalently every essential simple closed non-peripheral closed curve intersects it.  Similarly, an arc from the boundary of a compact surface to itself is said to be {\it filling} if every non-peripheral simple closed curve intersects it. Two curves 
 $\gamma_1$ and $\gamma_2$ are of the same {\it topological type}  if there exists an element  $g \in \text{MCG}$ so that $g(\gamma_1)$ is homotopic
 to $\gamma_2$. Unless we state otherwise our curves and arcs are  oriented.

 \begin{lem}
 Assume  $2-2g-n < 0$. Then the number of  mapping class group orbits of  (unoriented) separating simple curves on 
   a surface of genus $g$ with $n$ points removed is 
   \begin{itemize}
 \item $\lfloor \frac{g}{2} \rfloor$, if $g \geq 2$ and $n=0$. 
 \item $\lfloor \frac{n}{2}\rfloor -1$, if $g=0$ and $n\geq 3$.
 \item  $(n-1) +\lfloor \frac{g}{2}\rfloor (n+1)$,  if $g\neq 0$ and $n \neq 0$
 \end{itemize}
 
 \begin{proof} 
 
  The separating curves bound two subsurfaces each having one boundary component and genus at least one. The problem reduces to counting the number of ways of counting unordered admissible  partitions of $(g,n)$.   An admissible  partition of the pair $(g,n)$ is defined  to be  
 $(g_{1}, n_{1})$  and $(g_{2}, n_{2})$, where 
 $g=g_{1}+g_{2}$, $n=n_{1}+n_{2}$, and $(g_{i},n_{i})$ is not equal to 
 $(0,0)$ or $(0,1)$. 
 \end{proof}
 \end{lem}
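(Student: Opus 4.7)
The plan is to invoke the change of coordinates principle from the theory of mapping class groups: two essential, non-peripheral simple closed curves on $\Sigma_{g,n}$ lie in the same $\text{MCG}(\Sigma)$-orbit if and only if there is a self-homeomorphism of $\Sigma_{g,n}$ carrying one to the other, and by the classification of compact surfaces with boundary this happens precisely when there is a matching of the complementary pieces that respects genus, punctures, and the boundary components coming from the curve. For a \emph{separating} simple closed curve, the complement has exactly two connected components, each a compact orientable surface with a single boundary component (inherited from the curve). So the first step is to show that associating to a separating curve the unordered pair of topological types of its complementary subsurfaces gives a well-defined bijection between MCG-orbits and the set of admissible unordered partitions described below.

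A compact orientable subsurface with one boundary component is classified up to homeomorphism by its genus $g_i$ and number of punctures $n_i$. Since the two pieces are glued along $\eta$ to recover $\Sigma_{g,n}$, any such decomposition yields $g_1+g_2=g$ and $n_1+n_2=n$. The essentiality and non-peripherality of $\eta$ translate exactly into the two forbidden values for each side: $(g_i,n_i)=(0,0)$ would make one side a disk (so $\eta$ is null-homotopic), and $(g_i,n_i)=(0,1)$ would make one side a once-punctured disk (so $\eta$ is peripheral). The realization direction is also straightforward: given any admissible partition, take concrete model surfaces with the prescribed $(g_i,n_i)$ and a single boundary, and glue; change of coordinates guarantees uniqueness of the resulting MCG-orbit.

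With this reduction in hand, the lemma becomes a pure counting problem for unordered admissible partitions of $(g,n)$. Case (1) ($n=0$, $g\geq 2$) forces $n_1=n_2=0$, leaving unordered pairs $\{g_1,g_2\}$ with $g_1+g_2=g$ and $g_i\geq 1$, which gives $\lfloor g/2\rfloor$. Case (2) ($g=0$, $n\geq 3$) forces $g_1=g_2=0$ and $n_i\geq 2$, and the number of unordered pairs $\{n_1,n_2\}$ with $n_1+n_2=n$ and $n_i\geq 2$ is $\lfloor n/2\rfloor-1$. For case (3) ($g\neq 0$, $n\neq 0$), I would count ordered admissible pairs by taking the total $(g+1)(n+1)$ and removing the forbidden ones (those containing a $(0,0)$ or $(0,1)$ entry on either side), then pass to unordered pairs by the usual orbit-counting argument, treating the diagonal fixed point $(g/2,n/2)$ of the swap involution separately when $g$ and $n$ are both even.

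The main obstacle is the bookkeeping in case (3): the two forbidden types can appear on either side, and one must carefully avoid double-counting the corresponding ordered exclusions while simultaneously tracking which admissible partitions are palindromic under side-swap, as these contribute $1$ rather than $\tfrac12$ to the unordered total. The change of coordinates step itself is standard and can be cited, so the heart of the argument is the elementary but delicate combinatorics that assembles the claimed closed form.
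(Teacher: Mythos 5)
Your reduction is the same as the paper's: by the change of coordinates principle, mapping class group orbits of essential non-peripheral separating curves correspond bijectively to unordered admissible partitions of $(g,n)$, and your counts in cases (1) and (2) are correct. The problem is case (3), which you leave as a sketch; if you actually carry out the inclusion--exclusion and the orbit count you describe, you do not arrive at the displayed formula. For $g\geq 1$ and $n\geq 1$ the forbidden ordered pairs are $(g_1,n_1)\in\{(0,0),(0,1),(g,n),(g,n-1)\}$, four distinct pairs with no overlaps, so the ordered admissible count is $(g+1)(n+1)-4$; the side-swap involution has a unique fixed point $(g/2,n/2)$ exactly when $g$ and $n$ are both even, and that fixed point is admissible. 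Hence the number of unordered admissible partitions is $\bigl((g+1)(n+1)-4+\epsilon\bigr)/2$, where $\epsilon=1$ if $g$ and $n$ are both even and $\epsilon=0$ otherwise. This agrees with $(n-1)+\lfloor g/2\rfloor(n+1)$ when $g$ is odd, but not when $g$ is even: the stated formula counts each genus-balanced split $g_1=g_2=g/2$ with both orderings of the puncture distribution. Concretely, for $g=2$, $n=1$ the only admissible unordered partition is $\{(1,0),(1,1)\}$, i.e.\ one orbit, while the formula gives $2$; for $g=2$, $n=2$ there are three orbits, namely $\{(0,2),(2,0)\}$, $\{(1,0),(1,2)\}$ and $\{(1,1),(1,1)\}$, against the formula's $4$.

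So the step you postpone as ``delicate bookkeeping'' is exactly where your plan cannot be completed as a proof of the statement as written: finishing your computation gives, for even $g$, the value $\tfrac{g}{2}(n+1)+\lfloor n/2\rfloor-1$ rather than $(n-1)+\tfrac{g}{2}(n+1)$, so you must either record the parity-dependent answer or restrict the third bullet to odd $g$. Two smaller remarks: your translation of essentiality and non-peripherality into the exclusions $(0,0)$ and $(0,1)$ is the right one (the assertion that each complementary side has genus at least one is inaccurate once punctures are present), and your appeal to change of coordinates should use the full mapping class group, which is allowed to permute punctures; otherwise orbits would be refined by which punctures lie on which side and the count would change.
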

 \vskip10pt
 
 \noindent{\bf Filling Curves:}
 
 \begin{lem}[Minimal filling curve] \label{lem: minimalfillingcurve}
  If $\gamma$ is a filling curve on $\Sigma_{g}$ then 
  $i(\gamma,\gamma) \geq 2g-1 $. Moreover, there exists a curve that realizes this lower bound. Such a curve is called a minimal filling curve. 
 \end{lem}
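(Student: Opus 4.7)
The plan is to treat $\gamma$ as defining a CW decomposition of $\Sigma_{g}$ and then extract the inequality from the Euler characteristic. Put $\gamma$ in minimal self-intersection position, so that its image $|\gamma|$ is a $4$-valent graph on $\Sigma_{g}$ with $V=i(\gamma,\gamma)=:k$ vertices. Since each vertex is the endpoint of $4$ half-edges, a half-edge count gives $E=2k$. Because $\gamma$ is filling, the components of $\Sigma_{g}\setminus|\gamma|$ are open disks; call the number of these faces $F$.

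Apply Euler's formula to this CW structure:
\begin{equation*}
V-E+F \;=\; \chi(\Sigma_{g}) \;=\; 2-2g,
\end{equation*}
which rearranges to $F = k - 2g + 2$. Since $\gamma$ is a nonempty curve on a closed surface, there is at least one complementary disk, so $F\geq 1$, and hence $k\geq 2g-1$. This is the claimed lower bound.

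For the moreover part, I would exhibit a filling curve $\gamma_{0}$ achieving $i(\gamma_{0},\gamma_{0})=2g-1$. Equality in the computation above forces $F=1$, i.e.\ $\Sigma_{g}\setminus|\gamma_{0}|$ is a single open disk, and the boundary walk of this face uses each of the $2k=4g-2$ edges twice, so the face is an $(8g-4)$-gon with a prescribed side identification. The construction is then a standard combinatorial one: take an $(8g-4)$-gon and specify a side-pairing coming from a single Eulerian-type closed walk through a $4$-valent graph with $2g-1$ vertices, so that the quotient surface is orientable and closed of genus $g$; this is exactly what is carried out explicitly in Arettines~\cite{arettines2015geometry}, which one can cite directly.

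The only genuine subtlety is the realization half: one must check that the proposed side-pairing of the $(8g-4)$-gon actually produces an orientable closed surface of the right genus and that the resulting curve is a single closed loop (rather than a multi-curve) filling $\Sigma_{g}$. Both checks are straightforward Euler-characteristic bookkeeping combined with a connectedness argument on the dual $4$-valent graph, and, as noted, the existence is already established in the literature referenced in the introduction, so this step reduces to citing \cite{arettines2015geometry}.
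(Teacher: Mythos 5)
Your proposal is essentially correct, but it takes a different route from the paper: the paper offers no argument at all for this lemma, simply citing Arettines' thesis \cite{arettines2015geometry} for both the lower bound and the existence of a curve realizing it (and its later Lemma \ref{lem:minimalandseparating} independently exhibits a minimal filling curve explicitly as the boundary of an identified polygon). Your Euler-characteristic argument is the standard self-contained proof of the bound: with $V=k$ vertices, $E=2k$ edges from $4$-valence, and $F\geq 1$ disk faces by fillingness, $V-E+F=2-2g$ gives $k\geq 2g-1$. The one small point you should make explicit is the degenerate case $k=0$: a simple closed curve carries no natural CW structure with $0$-cells, so the count does not literally apply; but a simple closed curve never fills a closed surface of genus $g\geq 2$ (its complement has negative Euler characteristic, hence is not a union of disks), so one may assume $k\geq 1$ and the argument goes through. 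For the existence half you, like the paper, ultimately lean on the cited polygon construction; your equality analysis ($F=1$, a single $(8g-4)$-gon face) correctly identifies what such a construction must produce, and the verification that the side-pairing yields a closed orientable genus-$g$ surface with a single filling loop is exactly the content carried out in \cite{arettines2015geometry} and, in pictorial form, in the paper's Lemma \ref{lem:minimalandseparating}. In short: your approach buys an actual proof of the inequality where the paper only cites, at the cost of one easily patched degenerate case.
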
 
 
 The lower bound and the existence of the curve   was shown in Arettines thesis\cite{arettines2015geometry}.

 \begin{rem} A result of Hass-Scott \cite{hass1985intersections} says that an immersed curve has minimal self-intersection number if it does not bound a monogon nor a bigon. 
 \end{rem}

 \begin{lem}\label{lem:minimalandseparating}
Given a surface $\Sigma_{g}$, there exists a minimal filling curve and a separating curve such that they intersect each other twice.
 \end{lem}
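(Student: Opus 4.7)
My plan is to construct $\gamma_0$ explicitly as a concatenation of two filling arcs glued across $\eta$. Fix a separating simple closed curve $\eta \subset \Sigma_g$ dividing $\Sigma_g$ into $T_1 := \Sigma_{1,1}$ and $T_2 := \Sigma_{g-1,1}$, with two marked points $p, q \in \eta$. An Euler characteristic count completely analogous to the one behind Lemma \ref{lem: minimalfillingcurve} shows that any filling arc in $\Sigma_{h,1}$ with endpoints on the boundary has at least $2h-1$ self-intersections, and a minimal such arc has a single disc complement.

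Pick a minimal filling arc $\alpha \subset T_1$ from $p$ to $q$ with $i(\alpha, \alpha) = 1$, and a filling arc $\beta \subset T_2$ from $q$ to $p$ with $i(\beta, \beta) = 2g - 2$ (one above the minimum $2g - 3$, built from the minimal filling arc of $T_2$ by a single local modification adding one crossing). Since $\alpha$ and $\beta$ lie in disjoint subsurfaces, the concatenation $\gamma_0 := \alpha \cup \beta$ meets $\eta$ at exactly $\{p, q\}$ and
\[
i(\gamma_0, \gamma_0) \;=\; i(\alpha, \alpha) + i(\beta, \beta) \;=\; 1 + (2g - 2) \;=\; 2g - 1,
\]
matching the lower bound of Lemma \ref{lem: minimalfillingcurve}. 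It therefore suffices to show that $\gamma_0$ fills $\Sigma_g$, since then it is automatically minimal filling.

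To verify filling I examine $\Sigma_g \setminus \gamma_0$, which is built by gluing the unique disc of $T_1 \setminus \alpha$ and the two discs of $T_2 \setminus \beta$ along the two arcs of $\eta \setminus \{p, q\}$. A short topological analysis shows that the resulting complement is a single disc (so $\gamma_0$ fills) precisely when the two $\eta$-arcs lie on the boundaries of different disc components of $T_2 \setminus \beta$; otherwise one obtains a disc together with an annulus. The main obstacle is thus to choose the extra crossing of $\beta$ so as to separate the two $\eta$-arcs onto different disc components. This is a local combinatorial adjustment: using the cyclic order of the boundary of the single polygonal disc of $T_2 \setminus \beta_0$ (where $\beta_0$ is the minimal filling arc), one locates the two $\eta$-arcs and then places the new crossing along a segment of $\beta_0$ whose boundary-trace lies strictly between them, producing the desired split. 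A secondary point is that the concatenation at $p, q$ must avoid creating bigons that would drop $i(\gamma_0, \gamma_0)$ below $2g-1$; this is handled by requiring $\alpha$ and $\beta$ to meet $\eta$ transversely from distinct tangent directions at $p$ and $q$, so that the Hass--Scott criterion (Remark \ref{lem: minimalfillingcurve} and its surrounding discussion) applies.
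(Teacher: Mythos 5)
Your strategy---cut along $\eta$, build filling arcs in $\Sigma_{1,1}$ and $\Sigma_{g-1,1}$ with $1$ and $2g-2$ crossings respectively, glue at $p,q$, and verify filling by checking that the three complementary discs assemble into a single disc---is genuinely different from the paper's proof, which exhibits an explicit polygon (a $12$-gon for $g=2$, extended inductively by four edges per genus) whose boundary descends to the minimal filling curve and on which the separating curve is drawn directly. Your reduction of ``$\gamma_0$ fills'' to ``the two $\eta$-arcs lie on different disc components of $T_2\setminus\beta$'' is correct, and it is worth noting that once that is established you do not need Hass--Scott at all: if the drawn representative has complement a single disc then it is filling, so Lemma \ref{lem: minimalfillingcurve} gives $i(\gamma_0,\gamma_0)\geq 2g-1$, while the drawing exhibits at most $2g-1$ crossings, forcing equality; your ``distinct tangent directions'' condition, by contrast, does not by itself exclude bigons.

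The gap is that the two constructive inputs carrying all the content of the lemma are asserted rather than proved. First, the existence of a minimal filling arc in $\Sigma_{h,1}$ (with exactly $2h-1$ crossings, a single complementary disc, and endpoints at the prescribed points $p,q$) does not follow ``completely analogously'' from Lemma \ref{lem: minimalfillingcurve}: the Euler characteristic count gives only the lower bound $k\geq 2h-1$, and the cited existence statement is for closed curves on closed surfaces; producing such an arc is essentially the same polygon construction the paper carries out, so invoking it by analogy is circular in difficulty. Second, and more seriously, the ``local combinatorial adjustment'' adding one crossing to $\beta_0$ so that the two $\eta$-arcs land on boundaries of different complementary discs is exactly the step that needs an argument. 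The obvious moves do not do what you want: adding a kink creates a monogon face whose boundary lies entirely on $\beta$, so both $\eta$-arcs remain on the same disc, while a finger-push of one strand across another creates two new crossings, not one. So you must actually exhibit a re-routing of $\beta_0$ that adds a single essential crossing whose new face has an $\eta$-arc on its boundary, and show this can be done for every $g$; as written, ``place the new crossing along a segment whose boundary-trace lies strictly between them'' is a description of the desired outcome, not a construction. Until these two points are supplied, the proof is incomplete.
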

 \begin{proof}
     We provide an explicit construction for $\Sigma_{g}$ that allows us to see a minimal filling curve as a boundary of a disc/polygon and on this polygon identify a separating curve with intersection number 2 with the boundary.

     We begin with the case of a $g=2$. Consider the 12-gon on the left in Figure \ref{fig: polygon genus 2} with the identifications. As shown on the right in Figure \ref{fig: polygon genus 2} this is a genus two surface. 
     Now, consider the boundary of this polygon, this gives us a curve on the surface with intersection number 3. Since the complement of this curve is a disc by construction it is a filling curve. This is a minimal filling curve as it has intersection number 3 (\cite{arettines2015geometry}).

     Next consider the red edges shown in Figure \ref{fig:minimal sep intersection 2} on this polygon. Identifying these red edges yields a simple closed separating curve on the surface whose intersection number is 2 with the boundary curve (minimal filling curve).

     For a genus g surface this construction can be extended as shown in Figure \ref{fig:genus 3 polygon} and Figure \ref{fig:genus g polygon}, where we introduce four new edges and 2 new vertices (intersection points for the boundary curve) to our existing polygon to create a surface with one extra genus. Note that the central octagon gives us a once punctured torus, and as we move away from the octagon each pair of consecutive hexagons forms a twice punctured torus. At each step, we introduce two new self-intersections for the boundary curve, thus the self intersection number of the boundary curve is 2g-1 by induction. So this after identification of the boundary is a minimal filling curve on the surface.
     Note that the red edges after identification still form a separating curve, regardless of the genus of the surface and has intersection number 2 with the identified boundary curve (minimal filling).

 \end{proof}
 
%%%%%%%%%%FIGURE;%%%%%%%%%%%%
 \begin{figure}[!tbp]
  \centering
  \subfloat[Genus 2 polygon]{\includegraphics[scale=0.23]{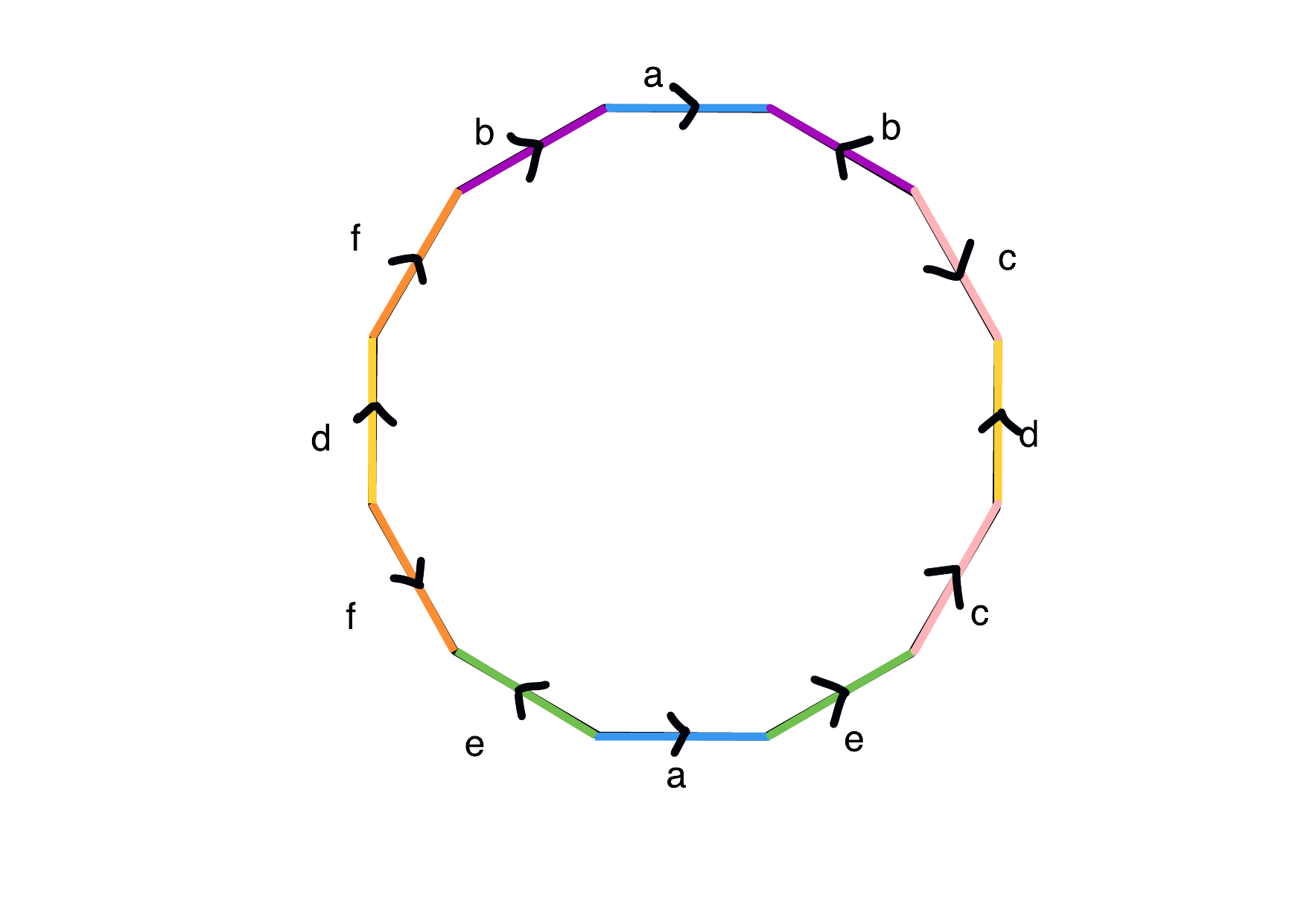}\label{fig:genus 2 polygon}}
  \hfill
  \subfloat[Genus 2 folded]{\includegraphics[scale=0.23]{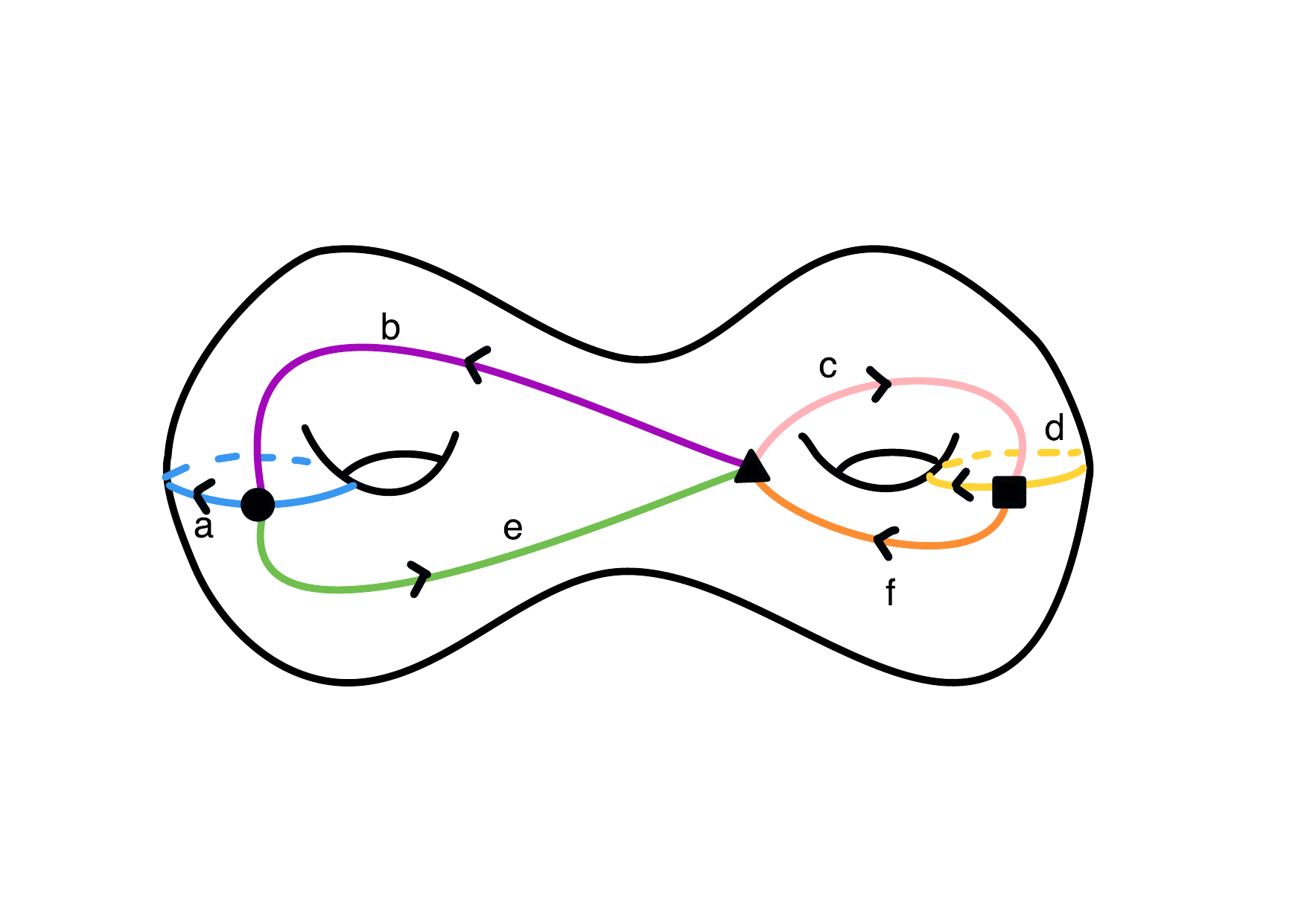}\label{fig:genus 2 folded}}

\caption{Polygonal representation of a genus 2 surface.}
\label{fig: polygon genus 2}
\end{figure}
%%%%%%%%%%FIGURE;%%%%%%%%%%%%

%%%%%%%%%%FIGURE;%%%%%%%%%%%%
\begin{figure}

\includegraphics[scale=0.3]{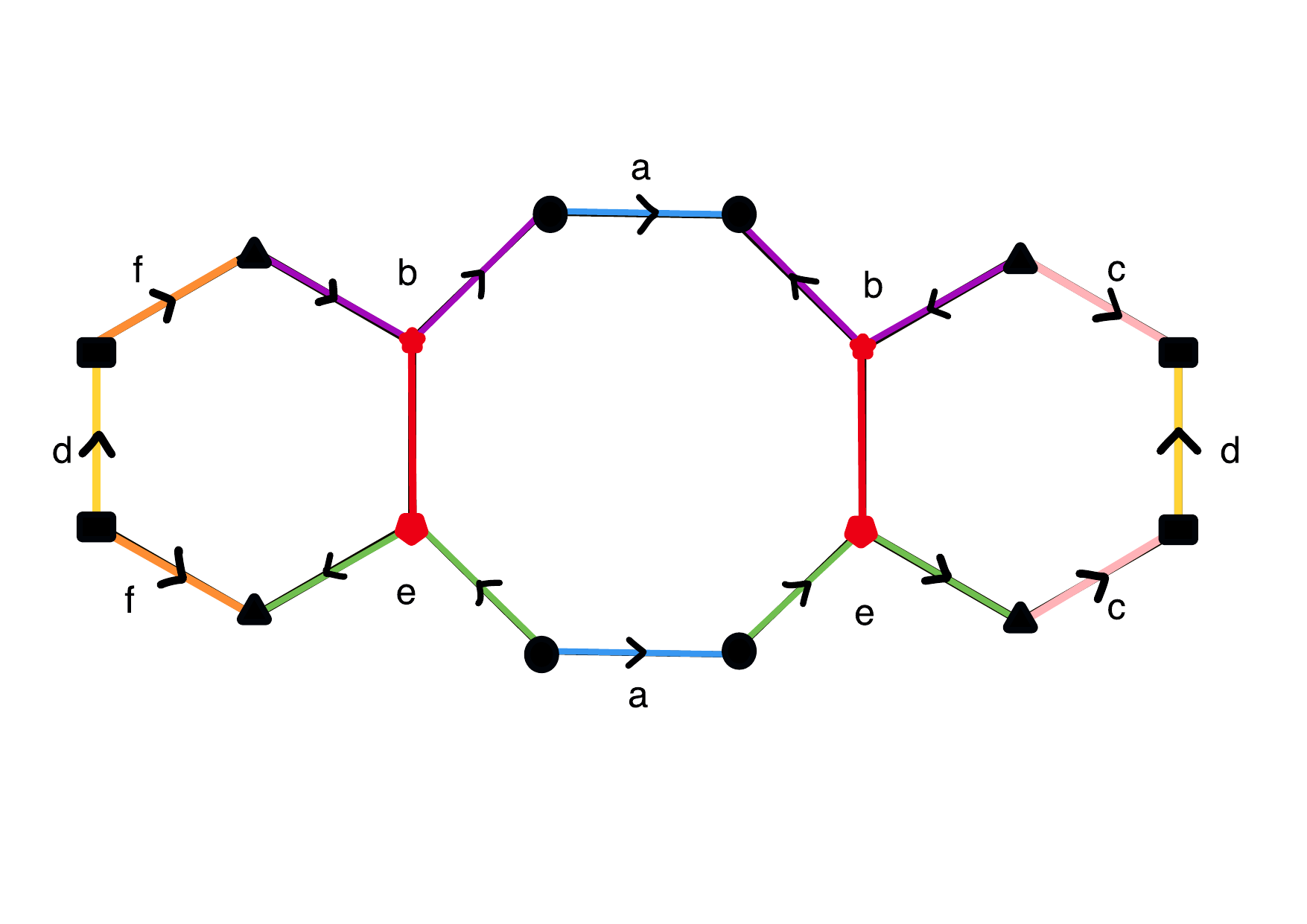}
%\pu\mathcal{T}(0,36){}
%\pu\mathcal{T}(9.5,11){$2$}
%\pu\mathcal{T}(42.5,6.5){$S_D$}
%\pu\mathcal{T}(33,20){$\gamma$}
\vspace{-30pt}
\caption{Separating curve on a genus 2 surface with intersection number 2 with minimal filling curve.}
\label{fig:minimal sep intersection 2}
\end{figure}
%%%%%%FIGURE%%%%%%%%%%%%%
%%%%%%%%%%FIGURE;%%%%%%%%%%%%
\begin{figure}
\includegraphics[scale=0.4]{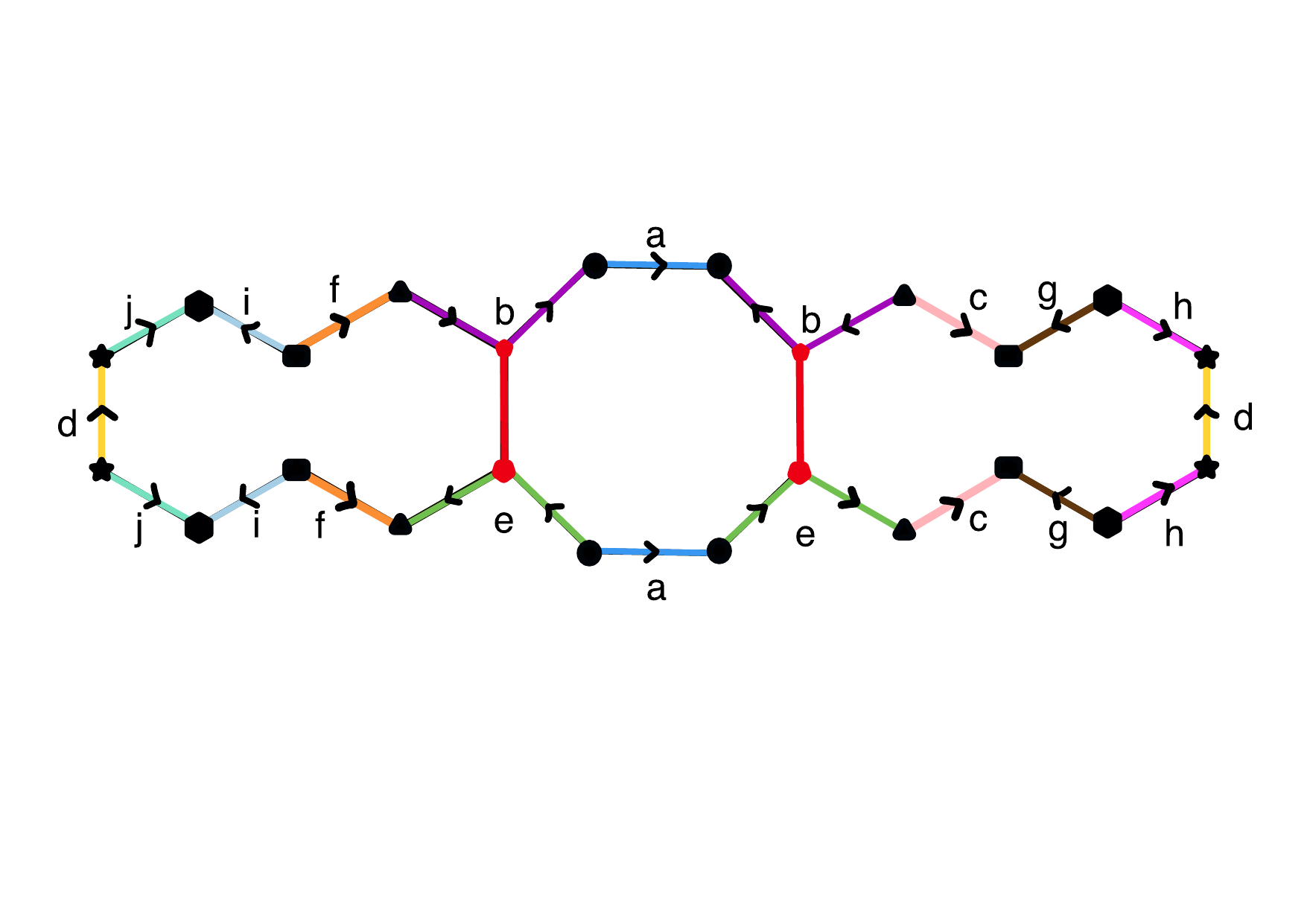}
\vspace{-70pt}
%\pu\mathcal{T}(0,36){}
%\pu\mathcal{T}(9.5,11){$2$}
%\pu\mathcal{T}(42.5,6.5){$S_D$}
%\pu\mathcal{T}(33,20){$\gamma$}
\caption{Polygonal representation for a genus 3 surface.}
\label{fig:genus 3 polygon}
\end{figure}
%%%%%%FIGURE%%%%%%%%%%%%%

%%%%%%%%%%FIGURE;%%%%%%%%%%%%
\begin{figure}
%\advance\leftskip-2cm
\includegraphics[scale=0.45]{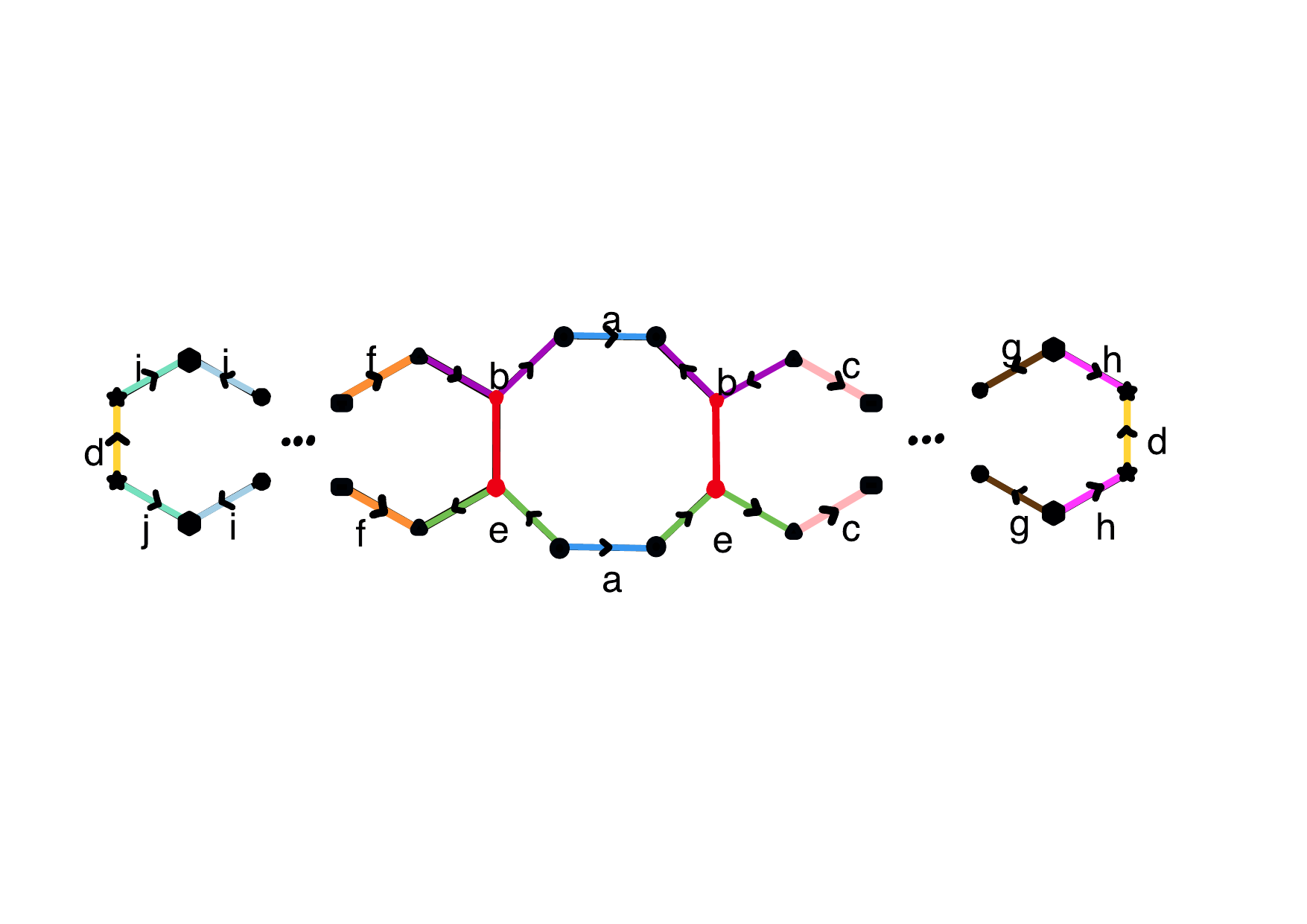}
%\pu\mathcal{T}(0,36){}
%\pu\mathcal{T}(9.5,11){$2$}
%\pu\mathcal{T}(42.5,6.5){$S_D$}
%\pu\mathcal{T}(33,20){$\gamma$}
\vspace{-80pt}
\caption{Polygonal representation for a genus g surface.}
\label{fig:genus g polygon}
\end{figure}
%%%%%%FIGURE%%%%%%%%%%%%%

\vskip10pt
\noindent{\bf Geometry:}
\vskip10pt

A {\it marked hyperbolic structure} on $\Sigma$ is
an orientation preserving diffeomorphism   
$f:\Sigma \rightarrow X$, denoted $(X,f)$, where $X$ is a hyperbolic structure of finite area (no funnels). Two marked hyperbolic structures $(X,f)$ 
and $(Y,g)$ are equivalent if  
$g \circ f^{-1}: X \rightarrow Y$ is homotopic to an isometry. The Teichm\"uller space of $\Sigma$, denoted $\mathcal{T}(\Sigma)$,  is the space  of all marked finite area complete hyperbolic structures endowed with the Teichm\"uller metric.  When there  is no chance of confusion or we are not interested in the marking, we denote the marked hyperbolic surface $(X,f)$ as just $X$.

 Fix $\gamma$ an isotopy class of essential curve, and define
 the length function of $\gamma$ to be $\ell_{\gamma} : \mathcal{T}(\Sigma) \rightarrow \mathbb{R}_{+}$, where
 $\ell_{\gamma} (X,f)$ is the length of the $X$-geodesic homotopic to $f(\gamma)$. The mapping class group acts effectively on  $\mathcal{T}(\Sigma)$ by,
  $g \cdot (X,f) \mapsto   (X,f \circ g^{-1}).$
 Moreover, $\ell_{\gamma} (g^{-1}\cdot \mathcal{T}(X,f))=\ell_{g(\gamma)}(X,f)$.  The quotient of the Teichm\"uller space by the mapping class group is the {\it moduli space} of  finite area  hyperbolic metrics on 
 $\Sigma$, denoted $\mathcal{M}(\Sigma)$. The mapping class group acts as a  group of isometries on Teichm\"uller space both with respect to the Teichm\"uller metric and the Weil-Petersson metric (see  \cite{masur2009geometry},
 \cite{wolpert1987geodesic} for definitions and properties of these metrics). 

 We define the {\it systole} of a hyperbolic structure $X$ (on a possibly disconnected surface) to be the shortest non-peripheral closed geodesic (there may be more than one) on $X$. It's length is denoted
 $\text{sys}(X)$.

 \begin{lem}
Suppose $X$ is a finite area hyperbolic surface with possibly geodesic boundary.
Then there is a constant $C$ that only depends on the topology of $X$ so that 
$\text{sys}(X) \leq C.$    
 \end{lem}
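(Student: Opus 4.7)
The plan is to invoke Bers' pants decomposition theorem for hyperbolic surfaces of finite topological type. For $X$ of topology $(g,n,m)$ (genus $g$, $n$ cusps, $m$ geodesic boundary components), this theorem produces a pants decomposition $\{\gamma_{1},\dots,\gamma_{3g-3+n+m}\}$ by interior essential simple closed geodesics, with each $\ell_{X}(\gamma_{i})\le B$ for some constant $B$ depending only on the topology of $X$. Since every pants curve is non-peripheral, $\text{sys}(X)\le \ell_{X}(\gamma_{1})\le B=:C$, as required.

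For a self-contained proof, I would follow Buser's approach: combine a Margulis-type thick/thin decomposition with the Gauss--Bonnet identity $\text{Area}(X)=2\pi|\chi(X)|$ to greedily produce short pants curves. More precisely, a shortest essential simple closed geodesic on $X$ has length bounded by a topological constant via an embedded collar/area estimate (the collar around such a $\gamma$ has area $2\ell_{X}(\gamma)/\sinh(\ell_{X}(\gamma)/2)$, which must be at most $2\pi|\chi(X)|$), and cutting along it and recursing on the resulting pieces yields the full decomposition.

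The main obstacle is the edge case in which $X$ is itself a pair of pants ($3g-3+n+m=0$): there are no interior pants curves, and in fact a Fricke-trace computation shows that the shortest non-peripheral (necessarily non-simple) closed geodesic can grow unboundedly with the geodesic boundary lengths. This case does not arise in the present paper, where the lemma is applied to components of a closed surface of genus $g\ge 2$ cut along an essential separating curve; such components have positive genus and hence always admit a non-trivial pants decomposition, so the bound from Bers' theorem applies directly.
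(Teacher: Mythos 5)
Your main step quotes Bers' theorem in a form that does not exist for surfaces with geodesic boundary: when the boundary lengths are unbounded there is no pants decomposition whose curves are bounded by a purely topological constant, and the failure is not confined to the pair of pants. Concretely, take a one-holed torus with Fricke traces $x=y=z=t$ satisfying $3t^{2}-t^{3}=2-2\cosh(L/2)$, i.e.\ a hyperbolic structure whose boundary geodesic has length $L$. Then $t\sim e^{L/6}$, the three shortest interior simple closed geodesics have length $2\operatorname{arccosh}(t/2)\sim L/3$, and no interior simple closed geodesic is shorter (the Vieta moves on $(x,y,z)$ only increase traces). So the first curve of any pants decomposition is forced to have length growing linearly in $L$, even though the piece has genus one; your reduction ``the cut pieces have positive genus, so Bers applies with a topological constant'' fails exactly in the paper's own setting, since a genus-two surface cut along $\eta$ yields two one-holed tori. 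The bordered version of Bers' theorem bounds pants curves by a constant depending on the topology \emph{and} an upper bound for the boundary lengths, and that dependence is necessary; what saves the paper's applications is the hypothesis $\ell_{\eta}\le 1$ built into $\mathcal{T}_{\epsilon}$, which your argument never invokes. Your self-contained fallback has a second, independent failure: the collar about a simple closed geodesic of length $\ell$ has area $2\ell/\sinh(\ell/2)$, which is at most $4$ for every $\ell$ and tends to $0$ as $\ell\to\infty$, so comparing it with $\mathrm{Area}(X)=2\pi|\chi(X)|$ is vacuous and yields no upper bound on $\ell$. The area mechanism that actually bounds a systole is an embedded disc of radius $\ell/2$ centered at a point of the systolic geodesic, whose area is $2\pi\bigl(\cosh(\ell/2)-1\bigr)$.

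That embedded-disc comparison is what the paper does, with no appeal to Bers' theorem: in the closed case the half-systole disc is embedded and Gauss--Bonnet bounds its radius; in the bordered case the paper doubles $X$ along its geodesic boundary, uses the reflection symmetry to argue that the half-systole disc embeds in the double, and compares with the (topological) area of the double. So your route is genuinely different from the paper's, but as written it has the two gaps above. Note also that the Fricke-trace phenomenon you observed for the pair of pants is exactly the phenomenon in the one-holed torus example; any correct treatment has to confront it, either by exploiting the boundary-length bound available in the paper's applications or via the passage to the double, where the delicate point is precisely the embeddedness of that half-systole disc.
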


 \begin{proof}
Let $\gamma$ be a geodesic on $X$  that realizes the systole length say $\ell$.

If $X$ has no boundary, then 
the convex core $C(X)=X$, and an area argument guarantees that $\ell$ is bounded by a constant that only depends on the area of $X$ (topology).  

Now if $X$  has geodesic boundary
then form the  surface  $\bar{X}$ by doubling along the geodesic boundary of $X$. Note that there is a natural reflection isometry $R: \bar{X} \rightarrow \bar{X}$.
Let $p$ be a point on $\gamma$, and consider the disc $D$  of radius 
$\frac{\ell}{2}$ centered at $p$.
We claim that $D$ is embedded in $\bar{X}$. This is clear if
$D \subset C(X)$. Otherwise, let 
$D_{1}=D \cap X$, and $D_{2}=R(X)\cap D$, and note that $D=D_{1} \cup D_{2}$.  By possibly switching the roles of $D_{1}$ and $D_{2}$ we may assume 
$\text{area}(D_{1}) \leq 
\text{area}(D_{2})$. Hence  $D_{2} \subset R(D_{1})$ and thus  $D$ is embedded in $\bar{X}$ and the conclusion follows. 
 \end{proof}

 \begin{rem} \label{rem: not filling}
 If $\gamma \subset \Sigma$ is not filling, then there exists an
 essential  simple closed curve on $\Sigma$ disjoint from $\gamma$. Cutting along it and noting that Thurston's strip map Theorem 
 ( see \cite{ParlierlengthRiemann} or \cite{thurston1998minimal}) guarantees that 
 $\ell_{\gamma}(X)$ can be decreased by a deformation that decreases the length of the boundary. This implies that the length function
 $\ell_{\gamma}$ cannot achieve its minimum in $\mathcal{T}(\Sigma)$
 but rather on the boundary $\partial \mathcal{T}(\Sigma)$. For this reason we assume  that $X$ has finite area (no funnels), and our curves are filling.
 \end{rem}
 
 We define the following function which will arise in our later coarse bounds for lengths. 
 $$
 r(x)=\text{arcsinh}\left(\frac{1}{\sinh x} \right).
 $$
 
 It is a consequence of the collar lemma (\cite{buser2010geometry})
 that a simple closed geodesic of length $\ell$ on a hyperbolic surface has a {\it natural (standard) collar neighborhood} of width 
 $r\left(\frac{\ell}{2}\right)$, and two such disjoint geodesics have disjoint collars. That is the distance from a point in the collar to the geodesic  is less than 
  $r\left(\frac{\ell}{2}\right)$.

%%%%%%%%%Inf Invariants%%%%%%%%%%%%%%%%%

\section{Inf invariants} \label{sec: Min  invariants}

Let $\Sigma$ be a surface of genus $g\geq 2$, and fix $\alpha$ a filling curve on $\Sigma$. We define two invariants of a curve or arc.  Both invariants can be defined in the more general setting of geodesic currents which we choose not to do as our interest in this paper is with constructing curves having certain geometric and topological properties. See  \cite{bonahon1988geometry}, \cite{hensel2023projection} for more on currents. In particular, in \cite{hensel2023projection}  the infimum length invariant for currents and their associated metrics arises.

Define the function:
$L_{\gamma} : \mathcal{M}(\Sigma) \rightarrow  \mathbb{R}_{\geq 0}$ to be, 
$X \mapsto \text{inf }\{ \ell_{\gamma} (g^{-1} \cdot (X,\phi)): 
g \in \text{MCG}\}$. Clearly, the choice of the marked structure  
$(X, \phi)$ does not effect the infimum. 
Define the {\it inf  invariant} of $\gamma$ to be: 
$m_{\gamma}:=
\text{inf }\{ \ell_{\gamma} (X, \phi): (X,\phi) \in \mathcal{T}(\Sigma)\}$.
Note that 
$m_{\gamma}=\text{inf }\{L_{\gamma}(X): X \in \mathcal{M}(\Sigma)\}$.

\begin{lem}[] Let  $\gamma$ be a filling curve on $\Sigma$. Then
 \begin{enumerate}
 \item  For $g \in \text{MCG}$, $L_{g(\gamma)}(X)=L_{\gamma}(X).$
 \item The infimum in the definition of  $L_{\gamma}(X)$ is achieved, and 
 $L_{\gamma}$ is a continuous function on 
  $\mathcal{M}$ with respect to its natural topology. 
\item $L_{\gamma}(X)  \rightarrow \infty$, as $X\rightarrow \partial\mathcal{M}$.
 \item  For $g \in \text{MCG}$, $m_{g(\gamma)}=m_{\gamma}.$
\item The infimum in the definition of  $m_{\gamma}$ is achieved at a unique marked hyperbolic structure. Hence, $L_{\gamma}$ achieves its infimum at a unique $X$ in $\mathcal{M}$.
\end{enumerate}
\end{lem}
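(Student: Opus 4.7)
\smallskip
\noindent\textbf{Proof plan.}
The plan is to handle the five items roughly in order, beginning with the purely formal identities (1) and (4), then dispatching the analytic content of (2) and (3) by standard length-spectrum and collar arguments, and finally combining these with a convexity input to obtain (5).

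Items (1) and (4) should reduce to a reindexing exercise using the identity $\ell_{\gamma}(g^{-1}\cdot(X,f)) = \ell_{g(\gamma)}(X,f)$ recorded in the Basics section. For (1), I would unroll $L_{g(\gamma)}(X) = \inf_{h\in\text{MCG}}\ell_{g(\gamma)}(h^{-1}\cdot(X,\phi))$, apply the identity to each term, and change the index of the infimum from $h$ to $h' = hg$. Item (4) is then immediate from (1) together with the formula $m_\gamma = \inf\{L_\gamma(X): X\in\mathcal{M}\}$ noted earlier in the section.

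For (2), fix a lift $(X,\phi)$ of $X \in \mathcal{M}$ so that $L_\gamma(X) = \inf_{g} \ell_{g(\gamma)}(X,\phi)$. On the fixed hyperbolic surface $X$ the length spectrum is discrete, so only finitely many elements of the orbit $\{g(\gamma): g\in \text{MCG}\}$ can have length below any fixed bound, and hence the infimum is realized. Continuity splits into two halves: upper semicontinuity is automatic because $L_\gamma$ is an infimum of continuous length functions, while for lower semicontinuity along $X_n \to X$ I would choose minimizers $g_n(\gamma)$, note that their lengths stay bounded on the nearby surface $X$, and use discreteness to pass to a subsequence on which $g_n(\gamma)$ is constant, at which point continuity of a single length function closes the argument. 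For (3), I would use the collar lemma: every element of the $\text{MCG}$-orbit of a filling curve is again filling, so its geodesic representative must cross the standard collar of every essential simple closed geodesic on $X$. Applied to the systolic geodesic this gives $L_\gamma(X) \geq 2 r(\text{sys}(X)/2)$, which tends to infinity as $\text{sys}(X) \to 0$; combined with Mumford compactness this yields $L_\gamma \to \infty$ at $\partial\mathcal{M}$.

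For (5), existence is now formal: by (2) and (3), $L_\gamma$ is continuous and proper on $\mathcal{M}$, so it attains its infimum at some $X_0 \in \mathcal{M}$, which lifts, via the minimizing mapping class in the definition of $L_\gamma(X_0)$, to a marked structure $(X_0,\phi_0) \in \mathcal{T}$ realizing $\ell_\gamma = m_\gamma$. The main obstacle is uniqueness, which needs a genuine convexity input rather than formal manipulation. My approach is to invoke strict convexity of $\ell_\gamma$ along a class of paths in $\mathcal{T}$ that joins any pair of points: since $\gamma$ is filling, one can use Kerckhoff's strict convexity along earthquake paths (viewing $\gamma$ as a filling geodesic current) or Wolpert's strict convexity along Weil--Petersson geodesics. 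Two distinct minimizers of $\ell_\gamma$ would then lie on such a path, along which $\ell_\gamma$ would be simultaneously strictly convex and constant equal to $m_\gamma$, a contradiction. Uniqueness in $\mathcal{M}$ follows since the minimizing marked structure is unique up to the $\text{MCG}$ action.
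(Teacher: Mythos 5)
Your proposal is correct and follows essentially the same route as the paper: formal reindexing for (1) and (4), discreteness of the length spectrum for attainment and continuity in (2), Mumford compactness (with the collar-lemma estimate the paper leaves implicit) for (3), and properness plus Weil--Petersson convexity for existence and uniqueness in (5). The only cosmetic difference is in uniqueness, where the paper uses convexity of $\ell_{\gamma}$ along the Weil--Petersson geodesic together with the observation that a filling curve's length cannot be constant along it, while you invoke strict convexity directly; either variant closes the argument.
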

\begin{proof} 
Item (1) follows from the definition. To prove item (2), first note that by elementary properties of the length function $\ell_{\gamma}$, 
$$L_{\gamma}(X)=\text{inf }\{ \ell_{\gamma} (g^{-1} \cdot (X,\phi)):g \in \text{MCG}\}
=\text{inf }\{ \ell_{g(\gamma)} ((X,\phi)): g \in \text{MCG}\}.
$$
Hence, $L_{\gamma}(X)$ is the same as  the infimum over 
the mapping class group orbit of $\gamma$ on $X$. Since the number of curves of $X$-length less than a real number is finite, (see 
\cite{buser2010geometry}) we can conclude that the infimum is achieved.  The continuity of 
$L_{\gamma}$ follows from the fact that the length function $\ell_{\gamma}$ is a continuous function on Teichm\"uller space
and  that  if two metrics are close in 
moduli space then there is a mapping class group orbit point of one near the other. Item (3) is a consequence of the 
Mumford compactness theorem.

To prove  item (4),  if  $g \in \text{MCG}(\Sigma)$ then 

\begin{align*}
\label{}
  m_{g(\gamma)}&=\text{inf }\{ \ell_{g(\gamma)} (X, f): (X,f)  \in \mathcal{T}(\Sigma)\} \\
    &=\text{inf }\{ \ell_{\gamma} (X, f \circ g): (X,f)  \in \mathcal{T}(\Sigma)\}\\
      &=\text{inf }\{ \ell_{\gamma}  ( (X, f \circ g)): g^{-1}(X,f)  \in \mathcal{T}(\Sigma)\}\\
&=\text{inf }\{ \ell_{\gamma}  ( (X, f \circ g)): (X,f \circ g)  \in \mathcal{T}(\Sigma)\}\\
&=m_{\gamma}
\end{align*}

To prove item (5),  first  note   that $m_{\gamma}$ and $L_{\gamma}$ are related by the formula,
$m_{\gamma}=\text{inf}\{L_{\gamma}(X): X \in \mathcal{M}\}$. Now 
suppose $L_{\gamma}(X_{n}) \rightarrow m_{\gamma}.$ Then the sequence of metrics $\{X_n\}$ in $\mathcal{M}$ must stay in a compact subspace by the Mumford compactness theorem. 
Hence, there exists a subsequence that converges to a metric 
$X \in \mathcal{M}$. We continue to use the same notation for the subsequence. Now the continuity of $L_{\gamma}$ guarantees that 
$L_{\gamma}(X_{n}) \rightarrow L_{\gamma}(X)$, and therefore 
$m_{\gamma}=L_{\gamma}(X)$. For uniqueness,  note that between any two marked hyperbolic structures there is a unique Weil-Peterson geodesic and the length function $\ell_{\gamma}$ is convex along it 
(see \cite{wolpert1987geodesic}).  Now if there were two marked structures for which $\ell_{\gamma}$ achieves the minimum it would have to be constant along the W-P geodesic. 
But  since 
$\gamma$ is filling $\ell_{\gamma}$ can not be constant along this geodesic. Thus the optimal metric for $\gamma$ must be unique. 
\end{proof}

The existence and uniqueness of the optimal metric  also follows from \cite{bonahon1988geometry}, \cite{sapir2016bounds} in the generalized setting of currents.

\vskip10pt

\noindent{\bf Filling arcs:}
\vskip10pt

Let $\Sigma_{1}$ be a finite type surface with one boundary component which we call $\eta$. For  $\epsilon >0$,  we consider the subspace of Teichm\"uller space with bounded geometry away from the boundary of 
$\Sigma_{1}$, and with boundary  length upper bounded. 
 More precisely this subspace $\mathcal{T}_{\epsilon}(\Sigma_{1})$ is 

  \begin{equation}
{\mathcal{T}_{\epsilon} (\Sigma_{1})}=
\bigg\{Y \in \mathcal{T}(\Sigma_{1}): 
\text{sys}(Y) \geq \epsilon, \ell_{\eta}(Y) \leq {1}\bigg\}
\end{equation}

If $\Sigma$ is a topological surface with no boundary, and  $\eta$ is a separating curve whose complementary subsurfaces are denoted $\Sigma_1$ and $\Sigma_2$. Then, we say that 
$(X,f) \in {T_{\epsilon}(\Sigma)}$ if $(X,f) \in 
{T_{\epsilon}(\Sigma_{1})}$ and    ${T_{\epsilon}(\Sigma_{2})}$.

%%%%%%FIGURE%%%%%%%%%%%%%%%%

\begin{figure}
\includegraphics[scale=0.35]{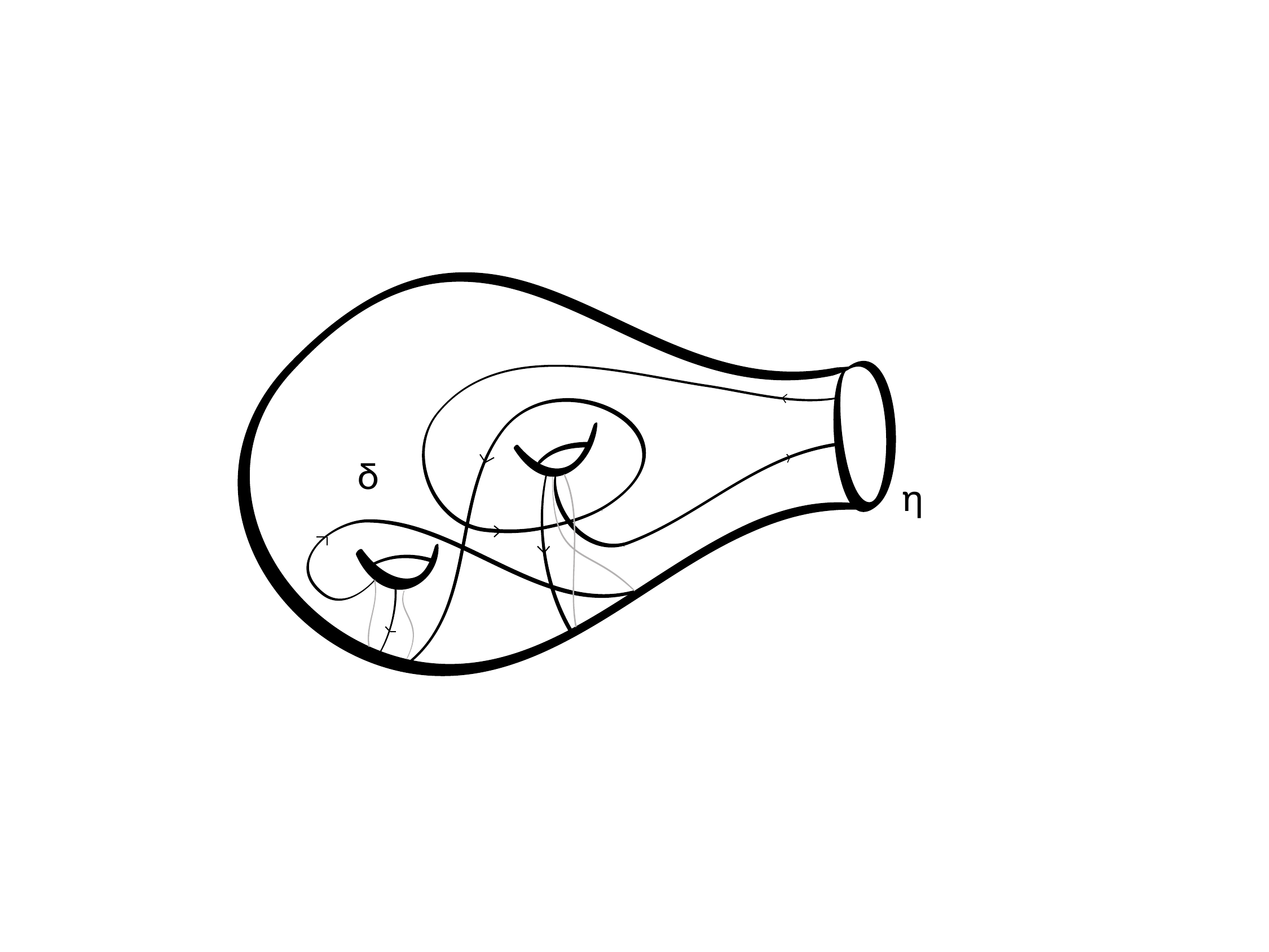}
%\pu\mathcal{T}(0,36){}
%\pu\mathcal{T}(9.5,11){$2$}
%\pu\mathcal{T}(42.5,6.5){$S_D$}
%\pu\mathcal{T}(33,20){$\gamma$}
\vspace{-80pt}
\caption{An orthogeodesic  $\delta$ from $\partial Y$ to itself}
\label{fig:delta}
\end{figure}
%%%%%%%%%%%FIGURE%%%%%%%%%%%

Suppose $Y$ is a compact hyperbolic surface with one geodesic boundary component and let $\delta$ be an orthogeodesic from $\partial Y$ to itself. Hence $\delta$  starts orthogonal to $\eta$, passes through the collar about $\eta$,  then traverses $Y$, and then returns to $\eta$ with its final segment passing through the collar about $\eta$. Denote by 
$\delta^{\prime}$  the orthogeodesic  that  traces the same path as $\delta$ except the initial and terminal segments in the collar about $\eta$ are deleted (see Figures \ref{fig:delta}  and \ref{fig:deltaprime}).

%%%%%%%%%%FIGURE;%%%%%%%%%%%%

\begin{figure}
\includegraphics[scale=.35]{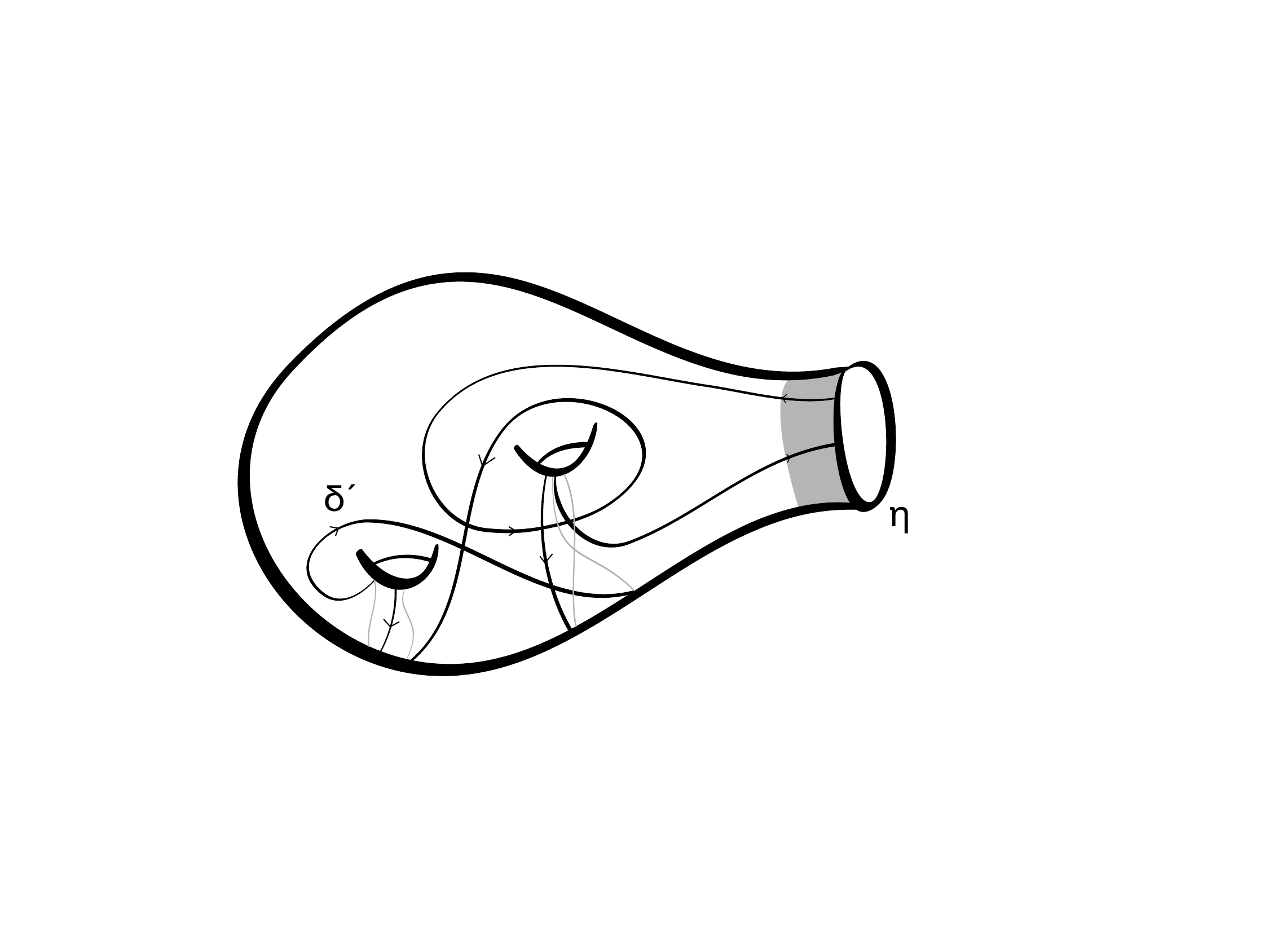}
%\pu\mathcal{T}(0,36){}
%\pu\mathcal{T}(9.5,11){$2$}
%\pu\mathcal{T}(42.5,6.5){$S_D$}
%\pu\mathcal{T}(33,20){$\gamma$}
\vspace{-80pt}
\caption{The orthogeodesic $\delta^{\prime}$ from $\partial \text{(Collar)}$ to itself}
\label{fig:deltaprime}
\end{figure}
%%%%%%FIGURE%%%%%%%%%%%%%

\begin{lem}\label{lem: arc bounded} 
Fix  $\epsilon >0$ and let   $\delta$ be  a multi-arc  from 
$\partial\Sigma_{1}$ to $\partial\Sigma_{1}$. 
 There exist constants, $d,D>0$ depending only on $\epsilon$  and the (relative free) homotopy class of $\delta$  so that,
\begin{equation}
d \leq \ell_{\delta^{\prime}} (Y) \leq D, \text{  for all }Y \in T_{\epsilon}(\Sigma_{1}).
\end{equation}
\end{lem}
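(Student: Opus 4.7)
The plan is to exploit the decomposition $\ell_{\delta}(Y) = \ell_{\delta'}(Y) + 2\,r(\ell_\eta(Y)/2)$ by passing to the complement of the standard collar of $\eta$ and applying a compactness/bounded-geometry argument there. Set $Y^{*} := Y \setminus \mathrm{int}\,\mathcal{N}(\eta)$, so that $Y^{*}$ is a compact hyperbolic subsurface whose (only) boundary is the hypercycle at distance $r(\ell_\eta/2)$ from $\eta$, and $\delta'$ is an orthogeodesic in $Y^{*}$ with both endpoints on $\partial Y^{*}$, in a fixed relative free homotopy class determined by $[\delta]$.

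The core geometric claim is that $Y^{*}$ varies in a precompact family of bordered hyperbolic surfaces as $Y$ ranges over $\mathcal{T}_{\epsilon}(\Sigma_{1})$. Three ingredients suffice: (i) $\mathrm{area}(Y^{*}) \le \mathrm{area}(Y) = 2\pi|\chi(\Sigma_{1})|$ by Gauss--Bonnet; (ii) the length of $\partial Y^{*}$ equals $\ell_\eta\cdot\cosh\,r(\ell_\eta/2) = \sqrt{\ell_\eta^{2}+\ell_\eta^{2}/\sinh^{2}(\ell_\eta/2)}$, which is bounded above by a universal constant once $\ell_\eta\le 1$ (and tends to $2$ as $\ell_\eta\to 0$); and (iii) any essential simple closed geodesic in $Y^{*}$ is essential and not freely homotopic to $\eta$ in $Y$, and hence has length at least $\mathrm{sys}(Y)\ge\epsilon$. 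From (i)--(iii), a standard $\epsilon$-ball packing argument in the thick part shows that the intrinsic diameter of $Y^{*}$ is bounded above by some $D_{0}=D_{0}(\epsilon,\chi(\Sigma_{1}))$.

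For the \emph{upper} bound, fix once and for all a topological reference bordered surface homeomorphic to $Y^{*}$ and a reference arc in the free homotopy class of $\delta'$. Using the diameter bound together with the bounded boundary length of $Y^{*}$, one builds a piecewise geodesic representative of $[\delta']$ in $Y^{*}$ whose length is at most $D_0 + K\,\ell(\partial Y^{*})$, where $K$ depends only on the combinatorics of the homotopy class (encoding how many times the representative needs to travel along the boundary). Since $\delta'$ minimizes length in its class, this yields $\ell_{\delta'}(Y)\le D$ uniformly on $\mathcal{T}_{\epsilon}(\Sigma_{1})$. For the \emph{lower} bound, observe that $\delta'$ is an essential arc in $Y^{*}$: if $\delta'$ were boundary-parallel in $Y^{*}$, then $\delta$ would be homotopic into the collar and hence into $\eta$, contradicting essentiality of $\delta$. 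By the precompactness of the family $\{Y^{*}\}$ (including its nodal limits as $\ell_\eta\to 0$) and continuity of the length function for a fixed essential arc class, the infimum of $\ell_{\delta'}$ is attained and is strictly positive, giving $\ell_{\delta'}(Y)\ge d>0$.

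The main obstacle I anticipate is bookkeeping at the boundary of the parameter space, namely the degeneration $\ell_\eta\to 0$: $\mathcal{T}_{\epsilon}(\Sigma_{1})$ itself is not compact (both because of this pinching and because of unbounded Dehn twists about $\eta$, though the latter act trivially on relative free homotopy classes of arcs with endpoints free to slide along $\eta$), so the compactness must be set up on the level of $Y^{*}$ and not on $\mathcal{T}_{\epsilon}(\Sigma_{1})$ directly. One must verify that the hypercyclic boundary of $Y^{*}$ converges to a well-defined geodesic pair-of-punctures boundary in the nodal limit, and that $\delta'$ extends continuously across this limit, so that positivity of the length is preserved.
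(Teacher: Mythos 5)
Your overall route is essentially the paper's: both arguments reduce to the identity $\ell_{\delta}=\ell_{\delta'}+2r(\ell_{\eta}/2)$, work on the complement of the standard collar, and treat the only metric degeneration $\ell_{\eta}\to 0$ by passing to the cusped limit surface; your lower-bound discussion (essentiality of $\delta'$ in $Y^{*}$ plus compactness up to the nodal limit) is in substance what the paper does and is fine. The genuine gap is in your upper bound. A diameter bound on $Y^{*}$ does not produce a representative of the prescribed class of length at most $D_{0}+K\,\ell(\partial Y^{*})$: a geodesic of length $\le D_{0}$ joining two prescribed points lies in \emph{some} homotopy class, not the one you need, and a topologically complicated class forces any representative to cross $Y^{*}$ a number of times governed by its combinatorics, so at best you get a bound of the shape $N\cdot D_{0}+K\,\ell(\partial Y^{*})$ where the number $N$ of controlled segments must itself be bounded uniformly in $Y$ --- and that uniformity is precisely the point at issue, not a consequence of the diameter bound.

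More seriously, the compactness you invoke is compactness of the \emph{unmarked} bordered surfaces $Y^{*}$, while ``the fixed relative free homotopy class determined by $[\delta]$'' only makes sense on the marked surface. Your parenthetical dismisses the marking ambiguity by noting that twists about $\eta$ act trivially on such arc classes (true), but the relevant noncompactness comes from twists about \emph{interior} curves: if $\alpha\subset\Sigma_{1}$ is a non-peripheral simple closed curve with $i(\alpha,\delta)\neq 0$ (such $\alpha$ exist; in the paper's application $\delta$ is the restriction of a filling curve), then $T_{\alpha}^{n}\cdot Y$ lies in $\mathcal{T}_{\epsilon}(\Sigma_{1})$ for every $n$, since $\text{sys}$ and $\ell_{\eta}$ are unmarked invariants, while $\ell_{\delta'}(T_{\alpha}^{n}\cdot Y)=\ell_{(T_{\alpha}^{-n}(\delta))'}(Y)$ tends to infinity on the fixed underlying surface. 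So you cannot simultaneously forget the marking (to get a precompact family and a diameter bound) and keep the arc class fixed (to bound $\ell_{\delta'}$); some normalization of the marking, or a restriction to the comparison metrics actually used later in the paper, is needed, and your argument does not supply it. The paper's own proof is admittedly terse on exactly this point --- it asserts continuity of $\ell_{\delta'}$ and that the only escape to infinity in $\mathcal{T}_{\epsilon}(\Sigma_{1})$ is $\ell_{\eta}\to 0$ --- but it does not rest on an unmarked diameter bound to manufacture a representative of a fixed class, which is the step that fails as written in your version.
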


\begin{rem}
The upper bound $\ell_{\partial \Sigma_{1}}(Y) \leq 1$ in 
$T_{\epsilon}(\Sigma_{1})$
 is necessary to guarantee an upper bound on $\ell_{\delta^{\prime}}$. This is because  $\delta^{\prime}$ may wrap around $\partial \Sigma_{1}$ at some intermediate portion of its path. 
\end{rem}

\begin{proof} It's enough to prove the lemma for a connected  arc since 
everything in consideration extends linearly to multi-arcs. 
The function, $\ell_{\delta}$ is continuous on $\mathcal{T}(\Sigma_{1})$, in particular on 
$\mathcal{T}_{\epsilon}(\Sigma_{1})$ \cite{basmajian2025orthosystolesorthokissingnumbers}.  The arc $\delta$ initially starts orthogonally to $\eta$ and ends  orthogonal to $\eta$. Along the way it may enter the collar about $\eta$ a number of times. Then since 
$\ell_{\delta^{\prime}}=\ell_{\delta}-2r({\frac{\ell_{\eta}}{2}}),$ we may conclude that  $\ell_{\delta^{\prime}}$ is a continuous function
on  $T_{\epsilon}(\Sigma_{1})$.
 The only way a sequence in  
$T_{\epsilon}(\Sigma_{1})$ can go off to infinity is if 
$\ell_{\eta} \rightarrow 0$, but in that case the length of the geodesic arc $\delta^{\prime}$ approaches the length of the geodesic arc $\delta^{\prime}$ on the cusped surface. The limiting hyperbolic structure is one where 
$\eta$ becomes a cusp.
 Thus $\ell_{\delta^{\prime}}$ is bounded from above and below.
\end{proof}

Let $\eta$ be a separating curve on $\Sigma$, and let 
$\Sigma_1$ be one of the components of $\Sigma-\{\eta\}$.  
Suppose $\gamma$ is a filling curve on $\Sigma$ and $\delta$ is the multi-arc defined by $\gamma$ in $\Sigma_{1}$. In the presence of a hyperbolic metric $X_1$, while the $X_1$-geodesic of $\gamma$ may not 
intersect the $X_1$-geodesic of $\eta$ orthogonally, we nevertheless have

\begin{lem} For $\epsilon>0$, there exists $K=K(\epsilon)$ so that for any hyperbolic structure $X_1$ on $\Sigma_1$, 
with  $\ell_{\eta}(X_1) \leq \epsilon$
\begin{displaymath}
 \ell_{\delta^{\prime}}(X_1) \leq a^{\prime}   \leq  K+ \ell_{\delta^{\prime}}(X_1)
\end{displaymath}
where $a$ is the geodesic multi-segment defined by the 
$X_1$-geodesic of $\gamma$ restricted to $\sigma_1$ , $a^{\prime}$ is the geodesic  $a$  minus its initial and terminal segments in  the collar of $\eta$, and 
$\delta^{\prime}$ is the multi-orthogeodesic as discussed previously. 
\end{lem}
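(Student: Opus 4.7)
Since the stated inequalities are additive over the components of the multi-arcs, I reduce to the case of a single arc $a$ in $\Sigma_1$ with corresponding orthogeodesic $\delta$ in the same free homotopy class rel $\eta$, and set $r := r(\ell_\eta(X_1)/2)$ for the standard collar half-width of $\eta$ in $X_1$.

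\emph{Lower bound} $\ell_{\delta'}(X_1) \leq \ell_{a'}(X_1)$: I would construct a competitor to $\delta$ from $a'$ by orthogonal collar extension. At each endpoint of $a'$ lying on $\eta_r$, append the unique geodesic segment of length $r$ crossing the collar orthogonally and landing on $\eta$. The resulting piecewise-geodesic arc $\tilde a$ has length $\ell_{a'}(X_1)+2r$ and endpoints on $\eta$. Because the collar of $\eta$ in $\Sigma_1$ is a (one-sided) annulus in which any two arcs from a fixed point on $\eta_r$ to $\eta$ are freely homotopic once the $\eta$-endpoint is allowed to slide, $\tilde a$ represents the same free homotopy class rel $\eta$ as $a$, hence as $\delta$. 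Since $\delta$ is the length-minimizing orthogeodesic in this class in $(\Sigma_1,\eta)$, we have $\ell_\delta \leq \ell(\tilde a) = \ell_{a'}+2r$; using that the orthogonality of $\delta$ at $\eta$ gives $\ell_\delta = \ell_{\delta'}+2r$, subtracting yields $\ell_{\delta'} \leq \ell_{a'}$.

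\emph{Upper bound} $\ell_{a'}(X_1) \leq K(\epsilon) + \ell_{\delta'}(X_1)$: Writing $\Delta_i := \ell_{a_i}-r \geq 0$ for the excess length of the $i$th collar crossing of $a$ over an orthogeodesic crossing, the inequality is equivalent to $\ell_a - \ell_\delta \leq \Delta_1+\Delta_2 + K$. I would work in the universal cover of $\Sigma_1$, lifting $\delta$ to the common perpendicular of two lifts $L_1,L_2$ of $\eta$ with $d(L_1,L_2)=\ell_\delta$, and $a$ to a geodesic segment from $p\in L_1$ to $q\in L_2$ at offsets $u,v$ from the feet $P,Q$ of the common perpendicular and with crossing angles $\theta_1,\theta_2$. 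Two identities drive the estimate: the Clairaut conservation law in the Fermi coordinates of $L_i$ yields $\sin\theta_i\,\sinh \ell_{a_i} = \sinh r$, hence $\Delta_i = \mathrm{arcsinh}(\sinh r/\sin\theta_i) - r$; and the hyperbolic quadrilateral formula
\begin{equation*}
\cosh \ell_a = \cosh d\,\cosh u\,\cosh v \;\pm\; \sinh u\,\sinh v
\end{equation*}
controls $\ell_a - d$ in terms of $u,v$. Combined with the relation between the offsets $u,v$ and the angles $\theta_i$ extracted from the same quadrilateral, the divergent contributions on the two sides cancel to leading order: in the grazing regime $\theta_i\to 0$ (equivalently large offsets), both $\ell_a - d$ and $\Delta_1+\Delta_2$ grow like $u+v$, and the residual is bounded by a constant depending only on $r(\epsilon/2)$ and the combinatorics of $\gamma\cap\eta$.

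\emph{The main obstacle} is the upper bound. The lower bound is essentially one line once the minimizing property of orthogeodesics is invoked. In the upper bound, however, the interesting case is $\ell_\eta(X_1)\to 0$: the collar width $r$ blows up, and nearly grazing intersections of $\gamma$ with $\eta$ force both $\ell_a - \ell_\delta$ and $\Delta_1+\Delta_2$ to grow without bound. One must verify via the hyperbolic identities above that their difference remains uniformly bounded by some $K(\epsilon)$ over all $X_1$ with $\ell_\eta(X_1)\leq\epsilon$.
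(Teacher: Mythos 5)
Your lower bound is fine and is essentially the paper's argument in slightly different packaging: the paper observes directly that $\delta'$ is an orthogeodesic from $\partial\mathcal{N}(\eta)$ to itself freely homotopic to $a'$, hence no longer than $a'$, while you extend $a'$ by two perpendicular collar crossings of length $r$ to build a competitor for $\delta$ and use $\ell_{\delta}=\ell_{\delta'}+2r$; these are equivalent one-line arguments.

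The upper bound, however, has a genuine gap: you never prove it. You reduce it to the claim that $\ell_a-\ell_\delta \leq \Delta_1+\Delta_2+K$, set up Fermi coordinates, the relation $\sin\theta_i\sinh\ell_{a_i}=\sinh r$, and a quadrilateral identity, and then assert that ``the divergent contributions cancel to leading order'' with a residual ``bounded by a constant depending only on $r(\epsilon/2)$'' --- and you yourself flag this verification as the unresolved main obstacle. That cancellation estimate is the entire content of the bound, and as stated it is not even clearly formulated: the relevant collar half-width is $r(\ell_\eta(X_1)/2)$, which blows up as $\ell_\eta\to 0$, so a bound ``depending on $r(\epsilon/2)$'' requires an argument that the divergence of $r$ drops out uniformly over all $X_1$ with $\ell_\eta(X_1)\leq\epsilon$ (and your aside that the constant depends on ``the combinatorics of $\gamma\cap\eta$'' should not appear in the per-arc estimate at all). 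The missing idea that makes all of this unnecessary is the one the paper uses: the boundary of the standard collar has length $\ell_\eta\coth(\ell_\eta/2)\leq \epsilon\coth(\epsilon/2)$ (it tends to $2$ as $\ell_\eta\to 0$), so it stays uniformly bounded precisely in the regime you worry about. Since $a'$ is a geodesic segment with endpoints on $\partial\mathcal{N}(\eta)$, it is no longer than the competitor path that runs along the collar boundary from an endpoint of $a'$ to an endpoint of $\delta'$, traverses $\delta'$, and returns along the collar boundary to the other endpoint of $a'$; this gives $\ell_{a'}\leq \ell_{\delta'}+K(\epsilon)$ with $K(\epsilon)$ twice the collar-boundary bound, with no trigonometric cancellation needed. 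If you want to keep your universal-cover computation, you must actually carry out the estimate relating $u,v$, $\theta_1,\theta_2$, and $r$ and exhibit the uniform constant; as written, the key step is only a plausibility sketch.
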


\begin{proof} The first inequality follows from the fact that $\delta^{\prime}$
is orthogonal to $\partial \mathcal{N}(\eta)$ and is freely homotopic to 
$a^{\prime}$. For the second inequality, note that $a^{\prime}$  is at most the piecewise geodesic segment given by going along the boundary of the collar, traversing the orthogeodesic $\delta^{\prime}$ and then traveling along the boundary of the collar of 
$\eta$ again. Since the boundary of the collar has length bounded from above by a function of $\ell_{\eta}(X_1)$ which is less than $\epsilon$, we are done. 

\end{proof}

 We fix  $X$  a hyperbolic surface and $\eta$ a separating simple closed geodesic on it.  Note that $\eta$ has a maximal collar, and that it is not necessarily symmetric. That is,
the width on one side may be different than the width on the other.  Denote by
 $\mathcal{C}$ this maximal collar. Let $a$ be a geodesic arc that joins the two components of $\partial \mathcal{C}$.  Now consider the geodesic  $h$ in  $\mathcal{C}$ which is perpendicular to $\eta$ and starts at the start point of $a$ and ends at the other component of  $\partial \mathcal{C}$.  We say that $a$ {\it winds  $m$-times  around $\eta$ in $\mathcal{C}$}  if  $a$ intersects $h$ 
$m$-times in the interior of $\mathcal{C}$. See the left most image in Figure \ref{fig:unrolling}.

%{\color{blue} Two figures needed: 1) a figure of  a cylinder and a geodesic arc $a$ wrapping around $m$-times. 2) Cylinder cut up along $h$.}

%%%%%%%%%%FIGURE;%%%%%%%%%%%%
\begin{figure}

\includegraphics[scale=.38]{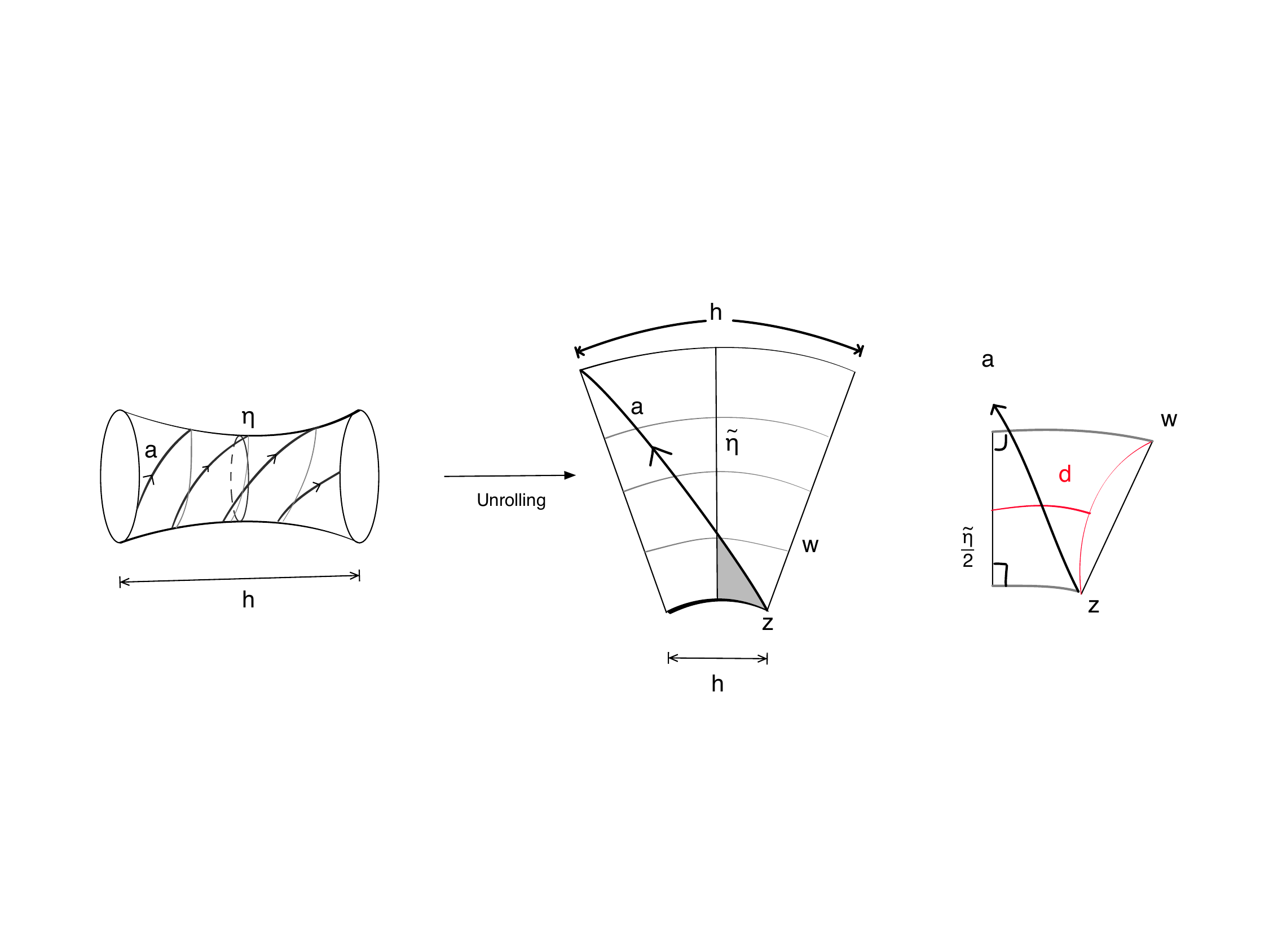}
\vspace{-90pt}
%\pu\mathcal{T}(0,36){}
%\pu\mathcal{T}(9.5,11){$2$}
%\pu\mathcal{T}(42.5,6.5){$S_D$}
%\pu\mathcal{T}(33,20){$\gamma$}

\caption{  Unrolling along $h$ a cylinder containing a geodesic arc $a$ wrapping around $m$-times.}
\label{fig:unrolling}
\end{figure}
%%%%%%FIGURE%%%%%%%%%%%%%

We define the function 

\begin{equation}\label{eq: collar winding function}
    f_m (x) = 2\text{arccosh}\left[\coth \left(\frac{x}{2}\right)  \cosh \left(\frac{mx}{2} \right)\right]
\end{equation}

The next Lemma shows that all but two turns of a geodesic arc about a simple closed geodesic  take place in the natural collar about the geodesic. 

\begin{lem}[Collar winding]
\label{lem:winding deep in a collar} Let $\mathcal{C}$ be the maximal collar about $\eta$. 

\begin{enumerate}
\item  Suppose the  geodesic arc $a$ crosses $\eta$ and winds $m$-times around $\eta$ in $\mathcal{C}$. Then all but at most two  of the intersections of $a$  with $\delta$  occur in the natural collar of $\eta$. That is, $a$ winds at least $(m-2)$-times around $\eta$ in the natural collar. 
In particular, the geodesic in the homotopy class of 
$\eta^{m}\ast \gamma_{0}$ winds $(m-2)$ times in the natural collar
$\mathcal{N}$. 

\item If the geodesic arc $a$ crosses $\eta$ and  winds $m$-times in  the natural collar of a geodesic of length $\ell$, then 

\begin{equation}
     f_m (l) \leq a \leq f_{m+1} (l)
\end{equation}

In particular, if $\ell = \ell_{\eta}$
\begin{equation}\label{eq: cyl. wrapping}
\coth  \left(\frac{\ell_{\eta}}{2} \right)\cosh \left(\frac{m \ell_{\eta}}{2} \right)  \leq \cosh \frac{a}{2}  \leq
 \coth \left(\frac{\ell_{\eta}}{2}\right)  \cosh \left(\frac{(m+1)  \ell_{\eta}}{2} \right)
\end{equation}

\end{enumerate}
\end{lem}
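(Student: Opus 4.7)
The plan is to lift everything to $\mathbb{H}^2$ and work in the upper half-plane with $\tilde{\eta}$ identified as the imaginary axis. In these coordinates the natural collar $\mathcal{N}$ becomes the wedge $\{re^{i\phi}:\phi\in[\phi_0,\pi-\phi_0]\}$ with $\tan\phi_0=\sinh(\ell/2)$, so $\sin\phi_0=\tanh(\ell/2)$ and $\csc\phi_0=\coth(\ell/2)$; the maximal collar $\mathcal{C}$ becomes a wider wedge $\phi\in[\phi_0',\pi-\phi_0']$ with $\phi_0'<\phi_0$; the consecutive lifts of $h$ are the semicircles $\{|z|=r_0 e^{k\ell}\}_{k\in\mathbb{Z}}$; and the lift $\tilde{a}$ is a Euclidean semicircle $z(\theta)=c+Re^{i\theta}$ with $|c|<R$, the inequality $R>|c|$ being forced by the hypothesis that $a$ crosses $\eta$.

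For part~(2), I would take the endpoints of the $\mathcal{N}$-segment of $\tilde{a}$ to be $A=r_1 e^{i\phi_0}$ and $B=r_2 e^{i(\pi-\phi_0)}$ with $r_1>r_2$, and let $t=\log(r_1/r_2)$ be the $\tilde{\eta}$-displacement between the feet of perpendicular from $A$ and $B$. Monotonicity of $|z|$ along $\tilde{a}$ (from $d|z|^2/d\theta=-2cR\sin\theta$) identifies the lifts of $h$ crossed by $\tilde{a}$ inside $\mathcal{N}$ with the $r_k=r_0 e^{k\ell}$ lying in $(r_2,r_1)$, so the winding hypothesis translates into $m\ell\leq t\leq(m+1)\ell$. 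Applying the upper half-plane distance formula
\[
\cosh d(A,B)=1+\frac{(r_1+r_2)^2\cos^2\phi_0+(r_1-r_2)^2\sin^2\phi_0}{2r_1r_2\sin^2\phi_0}
\]
and writing $(r_1+r_2)^2/(r_1r_2)=4\cosh^2(t/2)$, $(r_1-r_2)^2/(r_1r_2)=4\sinh^2(t/2)$, and $1+\cot^2\phi_0=\csc^2\phi_0$ collapses the right-hand side to $2\cosh^2(t/2)\csc^2\phi_0-1$, so that
\[
\cosh\bigl(d(A,B)/2\bigr)=\cosh(t/2)\csc\phi_0=\cosh(t/2)\coth(\ell/2).
\]
Substituting the extremes $t=m\ell$ and $t=(m+1)\ell$ gives $f_m(\ell)\leq d(A,B)\leq f_{m+1}(\ell)$, with the specialisation $\ell=\ell_\eta$ yielding (\ref{eq: cyl. wrapping}).

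For part~(1), the main tool is that $\phi$ is strictly monotonic along $\tilde{a}$ whenever $R>|c|$; this is a direct differentiation of $\tan\phi(\theta)=R\sin\theta/(c+R\cos\theta)$. Consequently $\tilde{a}$ meets each of the hypercycles $\partial\mathcal{N}$ and $\partial\mathcal{C}$ exactly once on each side of $\tilde{\eta}$, decomposing $\tilde{a}\cap\mathcal{C}$ into three consecutive geodesic subarcs: a top-shell segment in $\mathcal{C}\setminus\mathcal{N}$, the middle $\mathcal{N}$-segment, and a bottom-shell segment in $\mathcal{C}\setminus\mathcal{N}$. Reusing the trigonometric bookkeeping of part~(2) for a shell segment with endpoints on the hypercycles $\phi=\phi_0'$ and $\phi=\phi_0$ shows that its $\tilde{\eta}$-projection $\log(|z_{\phi_0'}|/|z_{\phi_0}|)$ is strictly less than $\ell$, and since the $h_k$ are spaced exactly $\ell$ apart along $\tilde{\eta}$, each shell segment meets at most one lift of $h$. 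Hence at most two of the $m$ crossings of $a$ with $h$ lie outside $\mathcal{N}$, giving the stated lower bound $(m-2)$ on winding inside $\mathcal{N}$.

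The main obstacle will be the shell estimate in part~(1): one must verify, directly from the upper half-plane coordinates and independently of the specific maximal collar width $d_{\max}$ or of the tilt parameter $\lambda=c/R$, that a geodesic subarc crossing from the hypercycle of width $r(\ell/2)$ out to the hypercycle of width $d_{\max}$ has $\tilde{\eta}$-projection strictly less than $\ell$. This is where the precise choice of $\phi_0$ encoded in the natural-collar width matters. Part~(2) by contrast is essentially a clean computation once $t$ has been identified with $\log(r_1/r_2)$ and the winding inequality $m\ell\leq t\leq(m+1)\ell$ has been recorded.
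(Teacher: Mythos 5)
Your part (2) is correct and is essentially the paper's own computation transported to upper half-plane coordinates: your identity $\cosh\bigl(d(A,B)/2\bigr)=\cosh(t/2)\coth(\ell/2)$ is exactly the paper's $\cosh(a/2)=\cosh(h/2)\cosh(x/2)$ with $\cosh(h/2)=\coth(\ell/2)$, which it obtains by cutting the collar along $h$, unrolling, and applying the hyperbolic Pythagorean theorem. One caveat in your bookkeeping: with an arbitrary normalization $r_0$ of the lifts of $h$, counting the points $r_0e^{k\ell}$ in $(r_2,r_1)$ only yields $(m-1)\ell< t<(m+1)\ell$. To get $m\ell\le t\le(m+1)\ell$ you must invoke the paper's convention that $h$ passes through the start point of $a$, so one lift of $h$ passes through $A$ (i.e.\ $r_0=r_1$) and the interior crossings occur at radii $r_1e^{-j\ell}$, $j\ge 1$; since the lower bound $f_m(\ell)\le a$ is the half used later (via $f_{m-2}$), this normalization should be stated.

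For part (1) there is a genuine gap, which you flag yourself: everything hinges on the claim that the shell segment of $\tilde a$ between the hypercycles $\phi=\phi_0$ and $\phi=\phi_0'$ has $\tilde\eta$-projection $<\ell$, and you do not prove it; a priori the maximal collar can be arbitrarily wide and the crossing arbitrarily oblique, so this is exactly the point that needs an argument. The claim is true and can be closed uniformly in the collar width and the tilt: in Fermi coordinates $(u,v)$ about $\tilde\eta$, with metric $\cosh^2 v\,du^2+dv^2$, the quantity $p=\cosh^2 v\,\frac{du}{ds}$ is constant along the geodesic $\tilde a$, and since $\tilde a$ meets the axis one has $|p|\le 1$; hence $\bigl|\frac{du}{dv}\bigr|\le \frac{1}{\cosh v\,\sinh v}$ outside the natural collar, and integrating from $v=r(\ell/2)$ to $\infty$ bounds the axial projection of each shell segment by $-\log\tanh r(\ell/2)=\log\cosh(\ell/2)<\ell/2$, more than enough. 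Note also that the paper proves (1) by a different route: it joins two consecutive lifts $z,w\in\partial\mathcal{C}$ of the start point of $a$ by a geodesic segment, shows the perpendicular from the axis to that segment has length $d$ with $\sinh d\,\sinh(\ell/2)<1$, so the segment dips into the natural collar, and then uses monotonicity of the distance to $\eta$ along $a$; your monotone-angle decomposition plus the shell estimate is a legitimate alternative, but only once the estimate above is actually supplied.
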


\begin{proof}
For the proof of item (1), 
we cut the cylinder $\mathcal{C}$ along $h$ and unroll it to get a
collar neighborhood of the lifted geodesic $\tilde{\eta}$ with $z$ and $w$ consecutive lifts of the intersection point between $h$ and  the right boundary component of $\mathcal{C}$ as in Figure
\ref{fig:unrolling}.
Now join $z$ and $w$ by a geodesic segment and let  $\delta$ be the unique orthogeodesic from the $y$-axis to this geodesic segment as on the right in Figure \ref{fig:unrolling}. The length of $\delta$, call it $d$, satisfies
$$\sinh d \sinh \frac{\ell_{\eta}}{2}<1
\text{  or equivalently } 
d < 
\text{arcsinh} \left(\frac{1}{\sinh \frac{\ell_{\eta}}{2}}\right)
$$
Hence, $\delta$ is contained in the natural collar of $\eta$ which implies the intersection of 
$\delta$ with $a$ is contained in the natural collar. Since the distance from  $\eta$ to a point on $a$ decreases as we move up  $a$,
we may conclude that with the exception of the part of $a$ that goes from $z$ to the intersection point of $\delta$ with $a$, the part of $a$ in the right half of 
$\mathcal{C}$ is contained in the natural right half collar.  Using a similar argument  for the left half of  $\mathcal{C}$, we may conclude that $a$ winds at least $(m-2)$-times around $\eta$ in the natural collar.

For item (2),
 first note that the length of $h$ is 
$2 r\left(\frac{\ell_{\eta}}{2}\right)$, and hence 
 $\cosh \frac{h}{2} =\cosh \left[r \left(\frac{\ell_{\eta}}{2} \right)\right]
=\coth \frac{\ell_{\eta}}{2}$. 
To prove  inequality (\ref{eq: cyl. wrapping}), we cut the cylinder along $h$ and unroll it 
to get a collar neighborhood of the lifted geodesic $\eta$ (see Figure \ref{fig:unrolling}). Denoting  the length of the unrolled cylinder by $x$,  note that by assumption 
\begin{equation}\label{eq: length of x}
m \ell_{\eta}  \leq x \leq (m+1)\ell_{\eta}.
\end{equation}
The unrolled cylinder contains  two isometric right triangles each with side lengths $\frac{h}{2}$, $\frac{x}{2}$, and hypotenuse $\frac{a}{2}$; hence by the hyperbolic Pythagorean Theorem
\begin{equation} \label{eq: p-theorem}
\cosh \frac{a}{2} =\cosh \frac{h}{2} \cosh \frac{x}{2}.
\end{equation}
Inequality (\ref{eq: cyl. wrapping}) now  follows by   using the inequalities in 
(\ref{eq: length of x})   for  $x$  in 
formula  (\ref{eq: p-theorem}),  and the fact that 
  $\cosh \frac{h}{2} =\coth \frac{\ell_{\eta}}{2}$.
\end{proof}

See  \cite{basmajian2025primeorthogeodesics} for 
related results on geodesic winding. 

\begin{lem} \label{lem: calculus}
 Fix $\text{b} >0$ and 
consider the function $f(x)=2 r\left(\frac{x}{2}\right)  +bx$ for $x >0$, 
where $ r\left(x\right)=\textup{arcsinh}\left(\frac{1}{\sinh x}\right)$.
Then $f(x) \geq 2 \left[\textup{arcsinh}(\text{b})+\text{b } \textup{arcsinh}(\frac{1}{b})                 \right]$. The  minimum value occurs at $x=2\textup{arcsinh}(\frac{1}{b}).$
\end{lem}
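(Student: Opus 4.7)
My plan is to treat this as a routine one-variable calculus minimization, where the only nontrivial step is simplifying the derivative of $r(x/2)$ using the identity $\cosh^{2}-\sinh^{2}=1$.

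First I would compute $f'(x)$. Writing $r(x)=\textup{arcsinh}\bigl(1/\sinh x\bigr)$ and applying the chain rule with $u=1/\sinh(x/2)$, we have $\tfrac{d}{du}\textup{arcsinh}(u)=1/\sqrt{1+u^{2}}$ and $u'=-\cosh(x/2)/\bigl(2\sinh^{2}(x/2)\bigr)$. The crucial simplification is
\begin{equation*}
\sqrt{1+\tfrac{1}{\sinh^{2}(x/2)}}=\frac{\cosh(x/2)}{\sinh(x/2)},
\end{equation*}
which causes the $\cosh(x/2)$ factors to cancel and yields
\begin{equation*}
\tfrac{d}{dx}r(x/2)=-\frac{1}{2\sinh(x/2)}, \qquad f'(x)=-\frac{1}{\sinh(x/2)}+b.
\end{equation*}

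Next I would solve $f'(x)=0$, which gives $\sinh(x/2)=1/b$, i.e.\ the unique critical point
\[x^{*}=2\,\textup{arcsinh}(1/b).\]
To confirm this is a global minimum on $(0,\infty)$, I would note that $f''(x)=\cosh(x/2)/\bigl(2\sinh^{2}(x/2)\bigr)>0$, so $f$ is strictly convex, and also observe that $f(x)\to\infty$ as $x\to 0^{+}$ and as $x\to\infty$.

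Finally, evaluating at $x^{*}$ and using $\sinh(x^{*}/2)=1/b$, we get $1/\sinh(x^{*}/2)=b$, so $r(x^{*}/2)=\textup{arcsinh}(b)$, and
\begin{equation*}
f(x^{*})=2\,\textup{arcsinh}(b)+b\cdot 2\,\textup{arcsinh}(1/b)=2\bigl[\textup{arcsinh}(b)+b\,\textup{arcsinh}(1/b)\bigr],
\end{equation*}
which is the claimed lower bound. There is no real obstacle here beyond carrying out the derivative cleanly; the only place to be careful is the cancellation in $f'$, since without it the critical point equation is not transparent.
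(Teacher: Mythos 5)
Your computation is correct: the derivative simplification $\tfrac{d}{dx}\,r(x/2)=-\tfrac{1}{2\sinh(x/2)}$, the critical point $x^{*}=2\,\textup{arcsinh}(1/b)$, the convexity/boundary check, and the evaluation of $f(x^{*})$ all check out. The paper leaves this lemma as a "calculus exercise" for the reader, and your argument is precisely the intended routine minimization, so there is nothing to add.
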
 %checked
Figure \ref{fig:lemma 5.1} depicts the graph $f$.
The proof of lemma \ref{lem: calculus} is a calculus exercise which we leave to the reader.  

%%%%%%%%%%FIGURE;%%%%%%%%%%%%
\begin{figure}
\includegraphics[scale=0.4]{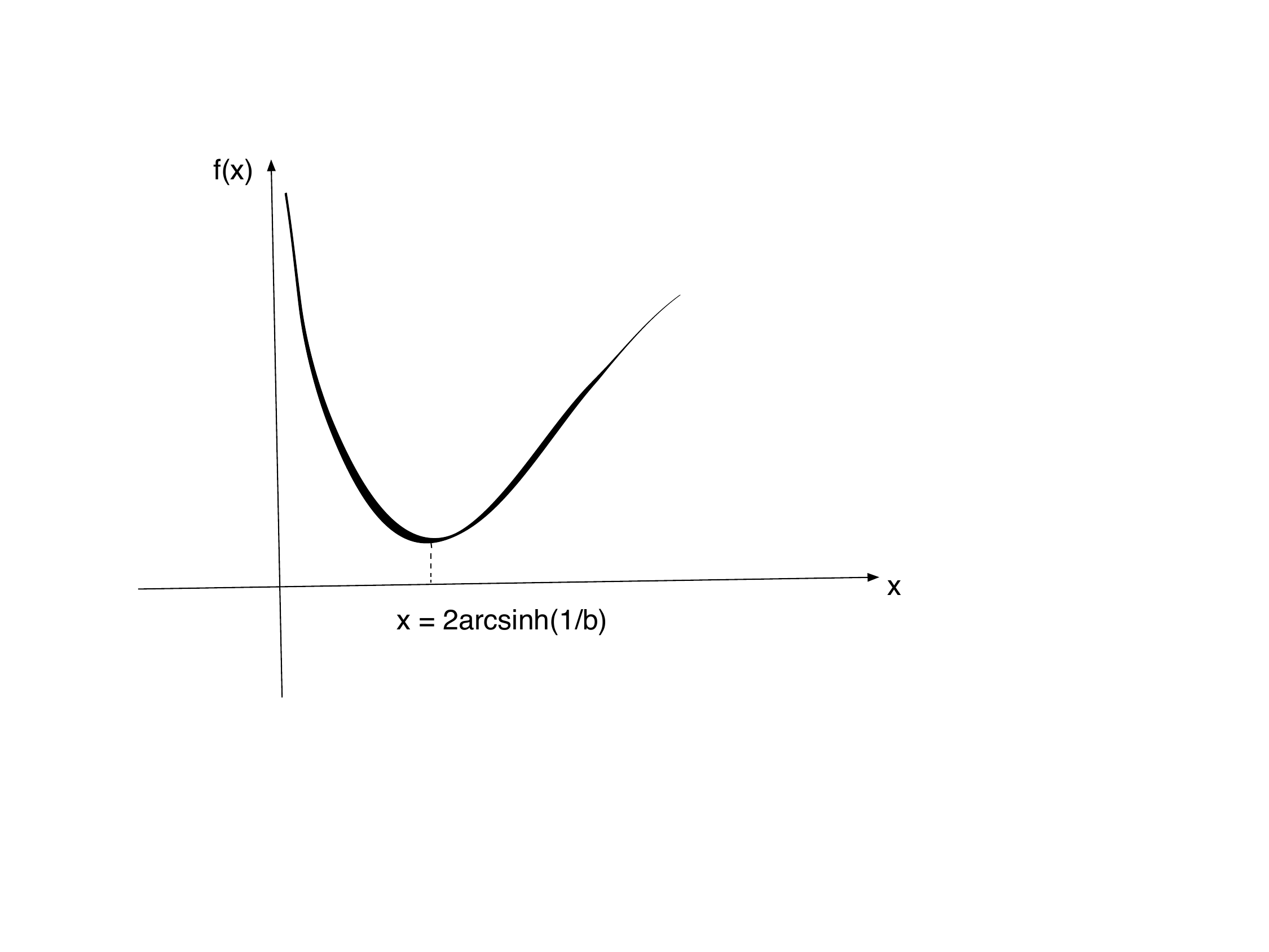}
\vspace{-90pt}
\caption{Graph of $f(x)$ for a fixed b }
\label{fig:lemma 5.1}
\end{figure}
%%%%%%FIGURE%%%%%%%%%%%%%

Finally we finish this section with  bounds that will be needed  
later.

\begin{lem} \label{lem: difference bounded} 

Let 
$f_{m}(x)=
 2\textup{arccosh}\left[\coth \left(\frac{x}{2}\right)  \cosh \left(\frac{mx}{2} \right)\right]$.  Then

\begin{enumerate}
\item   
$
2 r(\frac{x}{2})+xm-4\log 2 \leq f_{m}(x)
\leq    2r(\frac{x}{2})+xm+4\log 2$.

\item 
For any $s>0$  and $\epsilon >0$ there exists a positive constant $C$ depending only on $\epsilon$ and $s$ so that  
  $$ f_{m+s}(x)- f_{m}(x) \leq  C$$
  for all $x <\epsilon$ and all $m$ a positive integer.
  
  \end{enumerate}
\end{lem}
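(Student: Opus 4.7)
The plan is to reduce both bounds to elementary estimates on $\mathrm{arccosh}$ and $\log\cosh$. For part (1), I would first write $\mathrm{arccosh}(A) = \log(A + \sqrt{A^2-1})$ and use the standard sandwich
$$\log A \;\leq\; \mathrm{arccosh}(A) \;\leq\; \log(2A)$$
for $A \geq 1$, coming from $A \leq A + \sqrt{A^2-1} \leq 2A$. Applied with $A = \coth(x/2)\cosh(mx/2)$, this rewrites $f_m(x)$ as $2\log\coth(x/2) + 2\log\cosh(mx/2)$ up to an additive error of at most $2\log 2$.

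Second, I would exploit the elementary inequality $t - \log 2 \leq \log\cosh t \leq t$, valid for $t \geq 0$, in two places. Applied directly with $t = mx/2$ it gives $mx - 2\log 2 \leq 2\log\cosh(mx/2) \leq mx$. To handle the $\log\coth(x/2)$ term I would invoke the collar identity $\cosh(r(x/2)) = \coth(x/2)$, which follows immediately from $\sinh(r(x/2))\sinh(x/2) = 1$ in the definition of $r$. Writing $\log\coth(x/2) = \log\cosh(r(x/2))$ and applying the same $\log\cosh$ inequality yields $2r(x/2) - 2\log 2 \leq 2\log\coth(x/2) \leq 2r(x/2)$. Adding these estimates and absorbing the $\mathrm{arccosh}$-vs-$\log(2A)$ slack gives the two-sided bound in part (1) with total slack $4\log 2$ (the sharp slack is slightly smaller, but $4\log 2$ certainly suffices).

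Part (2) is then a one-line consequence. Applying the upper bound of part (1) to $f_{m+s}$ and the lower bound to $f_m$ gives
$$f_{m+s}(x) - f_m(x) \;\leq\; \bigl[2r(x/2) + (m+s)x + 4\log 2\bigr] - \bigl[2r(x/2) + mx - 4\log 2\bigr] \;=\; sx + 8\log 2,$$
and restricting to $x < \epsilon$ lets me take $C = s\epsilon + 8\log 2$, which depends only on $s$ and $\epsilon$.

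There is no real obstacle here; the lemma is a bookkeeping exercise once one observes that the collar identity $\cosh(r(x/2)) = \coth(x/2)$ is exactly the relation that aligns the $r(x/2)$ term of the target with the $\coth(x/2)$ factor inside $\mathrm{arccosh}$. The only mild care needed is tracking the direction of each inequality so that the estimates on $\mathrm{arccosh}$ and $\log\cosh$ combine additively without cancellation.
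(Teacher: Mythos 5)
Your proof is correct, and for part (1) it supplies exactly the elementary argument the paper leaves to the reader: the sandwich $\log A \leq \mathrm{arccosh}(A) \leq \log(2A)$, the bound $t-\log 2 \leq \log\cosh t \leq t$, and the collar identity $\cosh\bigl(r(x/2)\bigr)=\coth(x/2)$ combine to give slack $2\log 2$ above and $4\log 2$ below, within the stated $4\log 2$. For part (2) you take a mildly different route: you telescope the two-sided bound of part (1) to get $f_{m+s}(x)-f_m(x)\leq sx+8\log 2$, whereas the paper argues directly, bounding $f_{m+s}(x)-f_m(x)\leq 2\log 2+2\log\frac{\cosh((m+s)x/2)}{\cosh(mx/2)}$ and using the addition formula $\cosh\bigl(\tfrac{(m+s)x}{2}\bigr)=\cosh\bigl(\tfrac{mx}{2}\bigr)\cosh\bigl(\tfrac{sx}{2}\bigr)+\sinh\bigl(\tfrac{mx}{2}\bigr)\sinh\bigl(\tfrac{sx}{2}\bigr)$ to bound the ratio by $\cosh(sx/2)+\sinh(sx/2)=e^{sx/2}$, which yields the slightly sharper constant $2\log 2+s\epsilon$ versus your $8\log 2+s\epsilon$. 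Since the lemma only asks for the existence of some $C=C(s,\epsilon)$, both are perfectly adequate; yours has the virtue of making part (1) do double duty, the paper's of a better constant. One point worth making explicit in your write-up: applying part (1) with index $m+s$ for non-integer $s$ is legitimate precisely because your derivation of (1) never uses integrality of the parameter, so the two-sided bound holds for every real parameter $\geq 0$.
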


\begin{proof} Item (1) follows from elementary bounds
on $\text{arccosh}$ which we leave to the reader. 
Similarly for item (2), using elementary bounds on $\text{arccosh}$,  a straightforward calculation yields

  \begin{align*}
 f_{m+s}(x)- f_{m}(x) &\leq
 2\log 2 +
 2\log {\frac{\coth \left(\frac{x}{2}\right)  \cosh \left(\frac{(m+s)x}{2} \right)}{\coth \left(\frac{x}{2}\right)  \cosh \left(\frac{mx}{2} \right)}}  \\
  &=2\log 2 +
 2\log {\frac{\cosh \left(\frac{(m+s)x}{2} \right)}{\cosh \left(\frac{mx}{2} \right)}} \\
 &=2\log 2 +
 2\log {\frac{\cosh (\frac{mx)}{2} 
 \cosh (\frac{sx}{2})+\sinh (\frac{mx)}{2} )
 \sinh (\frac{sx}{2})}{\cosh \left(\frac{mx}{2} \right)}} \\
 &\leq 2 \log 2 + 2\log \left[
 \cosh (\frac{sx}{2})+
 \sinh (\frac{sx}{2})\right].\\
\end{align*}
Thus for $x<\epsilon$ and $s$ fixed the above expression is upper bounded. 
\end{proof}

\begin{rem}
To summarize, Lemmas \ref{lem:winding deep in a collar} and  \ref{lem: difference bounded} tell us that a geodesic arc that intersects a simple closed geodesic 
$\eta$ and winds $m$-times around it 
\begin{itemize}
\item does almost all of its winding in the natural collar about $\eta$
\item  up to an additive constant, has  length  in the natural collar of $\eta$  which is the sum of $m$-times the length of $\eta$ and the width of the natural collar about $\eta$.
\end{itemize}
\end{rem}

%%%%%%%%Constructing curve types%%%%%%%%%%%%

 \section{Constructing curve  families and their coarse length bounds} \label{sec: curve family}

For each closed surface of genus $\geq 2$, we   construct an infinite  family of filling curves   which depend on a base filling curve and a pair of integers $(m,n)$ (See Figure \ref{fig:alpha0fillingclosedsurface}).  The base curve corresponds to $(1,0)$. After constructing the $(m,n)$-curve we compute its intersection number and give coarse bounds on its length. Each such closed surface has at least one  separating curve, allowing us to describe a surgery process along this separating curve. 

\begin{figure}[htp!]

\includegraphics[scale=.35]{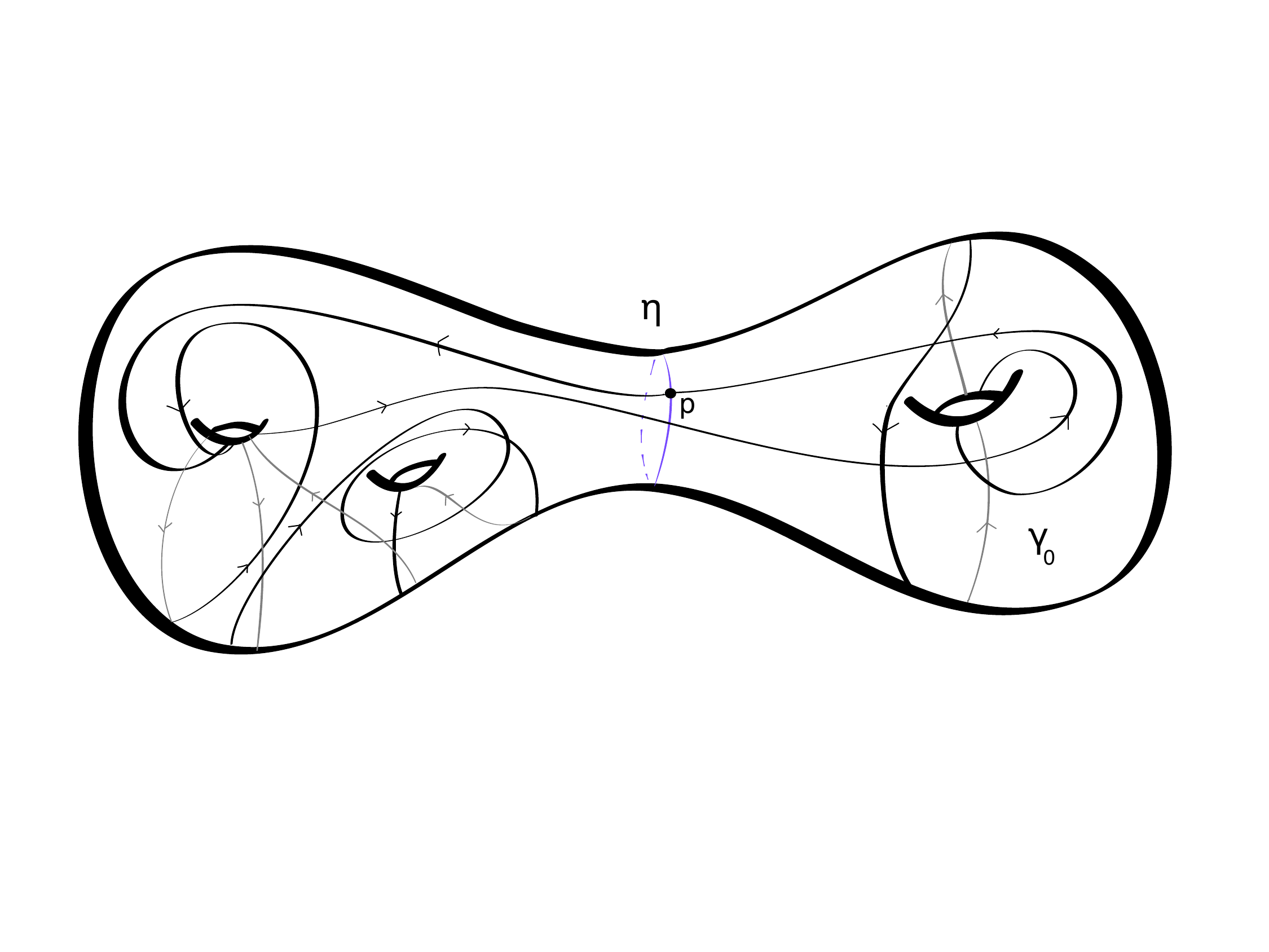}
%\pu\mathcal{T}(0,36){}
%\pu\mathcal{T}(9.5,11){$2$}
%\pu\mathcal{T}(42.5,6.5){$S_D$}
%\pu\mathcal{T}(33,20){$\gamma$}
\vspace{-80pt}
\caption{$\gamma_{0}$ filling on a closed  surface}
\label{fig:alpha0fillingclosedsurface}
\end{figure}
%%%%%%FIGURE%%%%%%%%%%%%%

\subsection*{Surgery along a separating simple curve:}

Let $\gamma_0$  be the minimal filling curve from Lemma \ref{lem:minimalandseparating}\and $\eta$  be the  separating curve intersecting it twice. Hence, $i(\gamma_0, \gamma_0)=  2g-1 $, and $i(\gamma_0, \eta)=  2$. We next describe a surgery construction along $\eta$. The curve $\eta$ separates $\Sigma$ into two components, call them $\Sigma_{1}$ and $\Sigma_{2}$.  Since   $i(\gamma_0,\eta) = 2$, 
$\gamma_{0} \cap \Sigma_{i}$ (for $i=1,2$) is a connected arc in $\Sigma_{i}$ from $\partial \Sigma_{i}$ to itself.  Let $p$   be  an intersection point of  
$\gamma_0  \cap \eta$, and consider $\gamma_0$ and 
$\eta$ as elements of $\pi_1 (\Sigma,p)$. Given a pair of positive integers $(m,n)$, let $\gamma_{m,n} \in \pi_1 (\Sigma,p)$
be the curve $\eta^{m}\ast  \gamma_{0}^{n}$, read from right to left (see Figure \ref{fig:alphaclosedsurface} for an example).
Let $X$ be a hyperbolic structure on $\Sigma$. 
 Recall that, $X_i$ is the hyperbolic structure $X$ restricted to $\Sigma_i$.
%%%%%%%%%%FIGURE;%%%%%%%%%%%%

\begin{figure}

\includegraphics[scale=0.35]{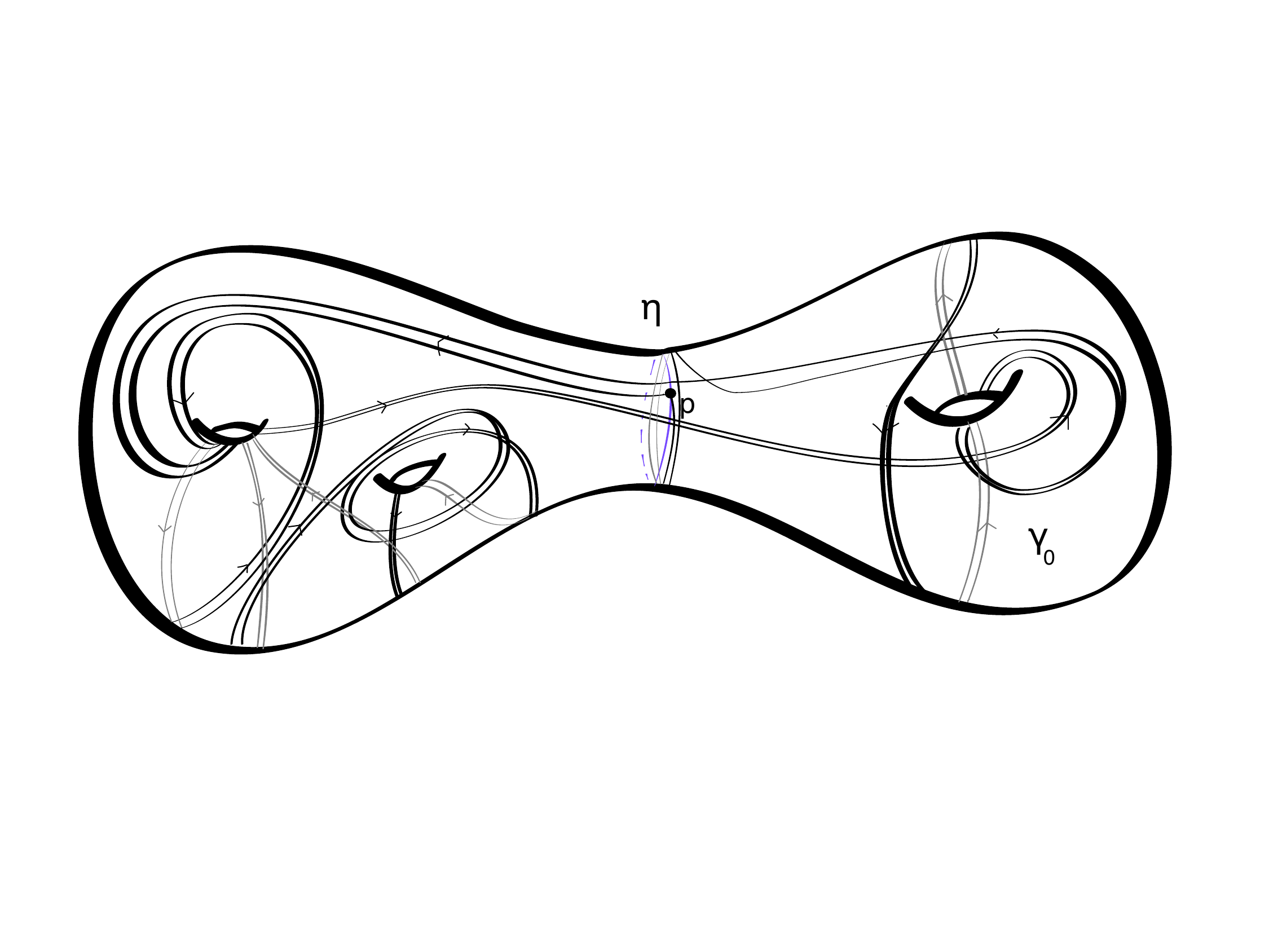}
\vspace{-90pt}
\caption{$\eta^{2}\ast  \gamma_{0}^{2}$ on a closed  surface}
\label{fig:alphaclosedsurface}
\end{figure}

\begin{prop} \label{prop: separating}
Given a pair of positive integers $(m,n)$, let $\gamma_{m,n}= \eta^{m}\ast  \gamma_{0}^{n}$, where $\gamma_0$ and $\eta$ are the curves in $\Sigma$ as above.  
Then 
\begin{enumerate}
\item  $i(\gamma,\gamma)=
n^{2}(2g - 1) + 2mn - m$.
\item   For any  $X$  contained 
in  $\mathcal{T}(\Sigma)$ and $m \geq 2$
$$ 2 \, \text{n} \bigg[ \, \text{r} \left(\frac{\textup{sys}(X_{1})} {2}\right)+ \, \text{r} \left(\frac{\textup{sys}(X_{2})} {2}\right)\bigg]   +  ( 4\text{n}-2) \, \text{r} \left(\frac{\ell_{\eta} (X)} {2}\right) + f_{m-2}(\ell_{\eta}(X)) \leq \ell_{\gamma_{m,n}}(X) $$

 \item   For any $X \in T_{\epsilon}(\Sigma)$

\begin{align*}
 \begin{split}
    \ell_{\gamma_{m,n}}(X) \leq & 
 (n-1)\left[2D+ 2K +4\, \text{r}\left(\frac{\ell_{\eta}(X)}{2}\right)   \right]\\
    &+\left[2D+ 3K +2\, \text{r}\left(\frac{\ell_{\eta}(X)}{2}\right)   \right]+ f_{m+1}(\ell_{\eta}(X))
    \end{split}
 \end{align*}

\end{enumerate}
where $f_{m}(x)$ is the function defined in formula 
(\ref{eq: collar winding function}),  the constant $D$ depends only on  $\epsilon$, and $K$ is a universal constant.
\end{prop}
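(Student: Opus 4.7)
To prove Proposition \ref{prop: separating}, I will use that $\eta$ is a simple separating curve and that $\gamma_{0}$ crosses $\eta$ exactly twice at $p$ and at a second point which I denote $q$. For part (1), I will put the representative obtained by concatenating the geodesic of $\gamma_{0}$ traversed $n$ times with $\eta$ traversed $m$ times into minimal position by smoothing the two corners of the concatenation at $p$, and verify minimality via the Hass--Scott bigon criterion cited in the remark following Lemma \ref{lem: minimalfillingcurve}. The crossings split into three locations: at each of the $2g-1$ self-intersections of $\gamma_{0}$ the $n$-fold power contributes $n^{2}$ crossings, giving $n^{2}(2g-1)$; at $q$ the $m$ through-$\eta$ strands cross the $n$ through-$\gamma_{0}$ strands, giving $mn$; and at $p$ a chord-diagram count on the boundary of a small disk about $p$, involving the $m-1$ through-$\eta$ passes, the $n-1$ through-$\gamma_{0}$ passes, and the two smoothed corners, yields $mn-m$ crossings. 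Summing gives $n^{2}(2g-1)+2mn-m$.

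For part (2), I decompose the $X$-geodesic representative $\gamma_{m,n}^{*}$ relative to $\eta$ and its natural collar $\mathcal{N}$. Since $i(\gamma_{m,n},\eta)=2n$, the geodesic crosses $\eta$ exactly $2n$ times, contributing $4n$ half-collar traversals each of length at least $r(\ell_{\eta}(X)/2)$. Between consecutive crossings there are $n$ essential arcs of $\gamma_{m,n}^{*}$ in each $\Sigma_{i}$, each in a fixed non-trivial homotopy class determined by $\gamma_{0}$; a standard collar argument forces each arc to traverse the standard collar of the systole of $\Sigma_{i}$, giving the lower bound $2r(\textup{sys}(X_{i})/2)$ per arc and hence the two systolic terms. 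Finally, the $\eta^{m}$ factor forces $\gamma_{m,n}^{*}$ to wind at least $m$ times around $\eta$; by Lemma \ref{lem:winding deep in a collar}(1) at least $m-2$ of these windings take place inside $\mathcal{N}$, so Lemma \ref{lem:winding deep in a collar}(2) contributes at least $f_{m-2}(\ell_{\eta}(X))$. The radial entry and exit of the winding arc into $\mathcal{N}$ absorb exactly two of the $4n$ half-collars, producing the coefficient $(4n-2)$ in the final bound.

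For part (3), I build an explicit non-geodesic representative of the free homotopy class realizing the claimed upper bound; since the geodesic minimizes length, this suffices. On each pass of the $\gamma_{0}$-factor through $\Sigma_{i}$, I use the orthogeodesic $\delta_{i}'$ from Lemma \ref{lem: arc bounded}, which has length at most $D$ on $\mathcal{T}_{\epsilon}(\Sigma_{i})$, and the subsequent lemma comparing $\ell_{\delta'}$ with the geodesic segment $a'$ lets me pay an additive constant $K$ to connect this orthogeodesic to the actual collar boundaries. Each full traversal of $\gamma_{0}$ then contributes at most $2D+2K+4r(\ell_{\eta}(X)/2)$, while the transition traversal joining into $\eta^{m}$ saves two half-collars at the cost of one extra $K$, giving the asymmetric term $2D+3K+2r(\ell_{\eta}(X)/2)$. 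For the $\eta^{m}$ factor I wind $m+1$ times inside $\mathcal{N}$, contributing at most $f_{m+1}(\ell_{\eta}(X))$ by Lemma \ref{lem:winding deep in a collar}(2). Summing yields the stated upper bound.

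I expect part (2) to be the main obstacle: the $(4n-2)r$ and $f_{m-2}$ contributions live in overlapping portions of $\mathcal{N}$, and careful bookkeeping is needed to verify that the radial traversals of the winding arc absorb exactly two of the $4n$ half-collars and no more. Additionally, the claim that the $\Sigma_{i}$-arcs of $\gamma_{m,n}^{*}$ each have length at least $2r(\textup{sys}(X_{i})/2)$ requires showing that their homotopy class actually forces them to cross the standard collar of the systole, which is a subtle consequence of $\gamma_{0}$ being filling. Part (1) is combinatorial once the chord diagram at $p$ is set up, and part (3) is a calibration exercise.
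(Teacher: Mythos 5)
Your proposal follows essentially the same route as the paper for all three items: the same local count of crossings at the self-intersections of $\gamma_{0}$ and at the two points of $\gamma_{0}\cap\eta$ for (1), the same thick/collar decomposition with the collar lemma for the $2n$ crossings, the filling property of the $\gamma_0$-arcs for the systole terms, and Lemma \ref{lem:winding deep in a collar} for the $f_{m-2}$ term in (2) (your ``$4n$ half-collars minus the two absorbed by the winding strand'' is just the paper's count of $2n-1$ straight strands plus one winding strand), and the same explicit piecewise path built from $\delta_i'$, collar-boundary segments of length at most $K$, collar crossings of length $2r(\ell_\eta/2)$, and a winding arc bounded by $f_{m+1}$ for (3). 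The only quibble is a phrasing slip in (3): the connecting arc should wind $m$ times (so Lemma \ref{lem:winding deep in a collar}(2) gives the bound $f_{m+1}$), not $m+1$ times, which would only give $f_{m+2}$.
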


%%%%%%%%%%FIGURE;%%%%%%%%%%%%
\begin{figure}[htp!]
\includegraphics[trim=1cm 6cm 1cm 1cm, clip=true, width=0.7\textwidth]{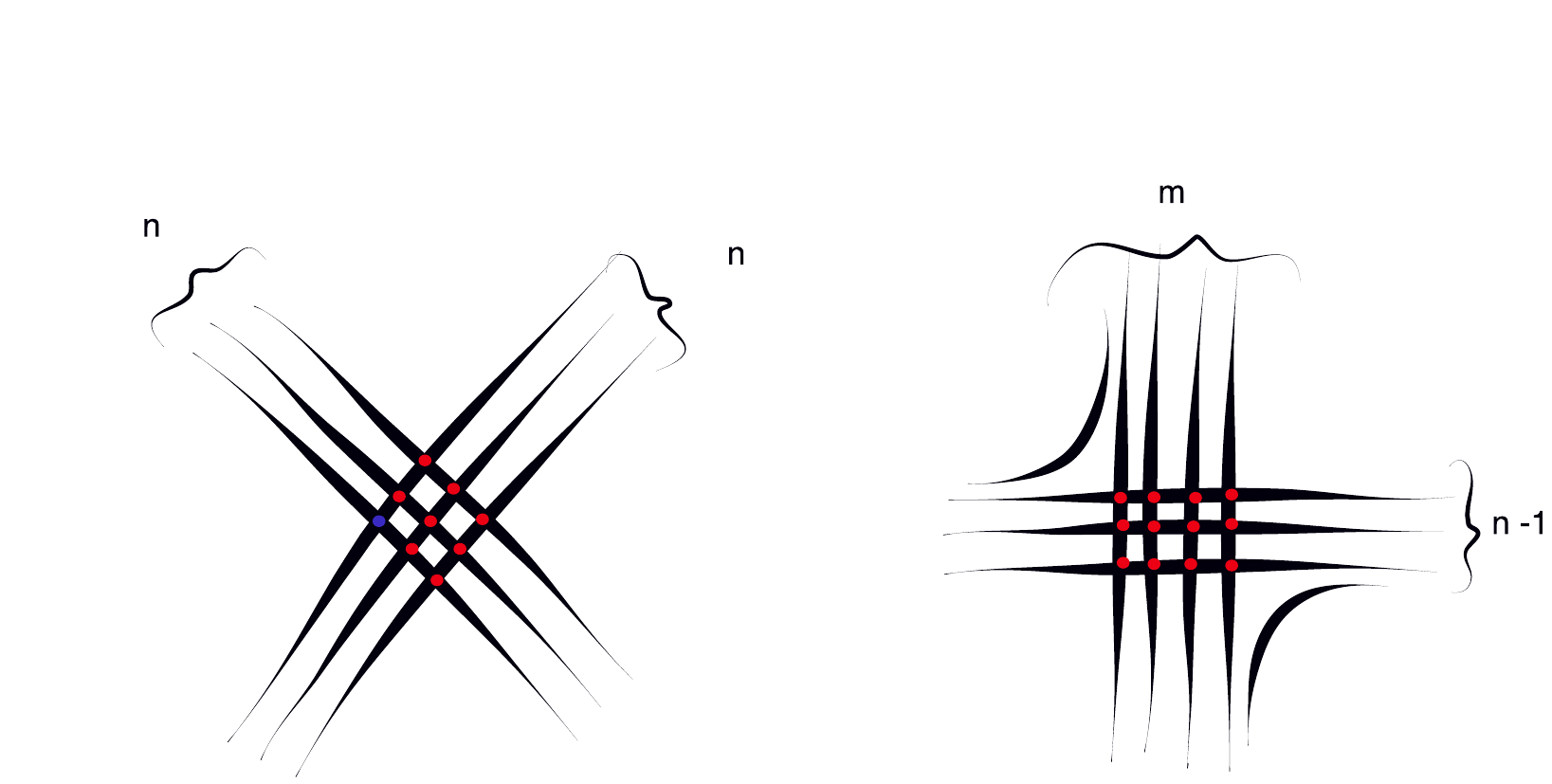}
%\pu\mathcal{T}(0,36){}
%\pu\mathcal{T}(9.5,11){$2$}
%\pu\mathcal{T}(42.5,6.5){$S_D$}
%\pu\mathcal{T}(33,20){$\gamma$}
\vspace{10pt}
\caption{Local self-intersection patterns of $\gamma_{m,n}$.}
\label{fig:intersection pattern}
\end{figure}
%%%%%%FIGURE%%%%%%%%%%%%%
\begin{proof}  Item (1) follows from the local intersection patterns that arise 
from the original intersections of $\gamma_{0}$ and the new intersections between $\gamma_{0}$ and $\eta$.
A self-intersection point for $\gamma_{m,n}$ arising  from the original curve  $\gamma_{0}$ is depicted on the left of  Figure \ref{fig:intersection pattern}. New intersection points that arise in a neighborhood of $\eta$ are illustrated on the right of Figure \ref{fig:intersection pattern}.

Proof of Item (2): For ease of notation, we denote the $X$-geodesic in the homotopy class of $\gamma_{m,n}$ by $\gamma$. Let $X$ be in $\mathcal{T}(\Sigma)$. Now, we cut $X$ up in 3-pieces, one being the natural collar $\mathcal{C}$ about the $X$-geodesic in the homotopy class of $\eta$, and the others being the two thick components in $X - \mathcal{C}$.  Set  $a_i' = X$-length of $\gamma$ in $X$ restricted to the thick pieces for $i= 1,2 $ and $a_3 = X$-length of $\gamma$ in the natural collar about $\eta$ (See Figure \ref{fig:separating case split}).
Now, $\gamma$ winds $m$-times around $\eta$, and hence by Lemma \ref{lem:winding deep in a collar}, $\gamma$ winds at least $m-2$ times in $\mathcal{C}$. Also, there are $2n-1$-strands in $\mathcal{C}$, each of length at least $2 \text{r} \left( \frac{\ell_{\eta}(X)}{2}\right )$. So, $a_3\geq  (2\text{n} -1) 2\text{r} \left( \frac{\ell_{\eta}(X)}{2}\right ) + f_{m-2}(\ell_{\eta}(X))$.
$\gamma$ has $n$-strands in each of the thick parts and each of these strands must cross $\text{sys}(X_i)$, ${i = 1,2}$. 
Moreover, since the natural collar of a systole in $X_i$ is disjoint from the natural collar of $\eta$, we may conclude $a_i' \geq 2 \text{r} \left(\frac{\text{sys}(X_i)}{2} \right)\text{n}$ for ${i = 1,2}$.

So,
\begin{align}
    \begin{split}
a_1' + a_3 + a_2' \geq \bigg[2 \, \text{r} \left(\frac{\textup{sys}(X_{1})} {2}\right) &+ 2 \, \text{r} \left(\frac{\textup{sys}(X_{2})} {2}\right)\bigg] \text{n} \\
&+ (2 \,\text{n}-1) \,2 \, \text{r} \left(\frac{\ell_{\eta} (X)} {2}\right) + f_{m-2}(\ell_{\eta}(X)).\\
        \end{split}
\end{align}
To prove item (3), we continue to denote $\gamma_{m,n}$ by $\gamma$ and parametrize  $\gamma : [0,1] \rightarrow X$ so that 
$\gamma(0)=\gamma(1)=p$. Assume  $X \in \mathcal{T}_{\epsilon}(\Sigma)$.
The curve $\eta$ divides $\Sigma$ into two components we call $\Sigma_{1}$  and $\Sigma_{2}$. 
Now for each $i=1,2$, $\gamma \cap \Sigma_{i}$ is a multi-arc  with endpoints in $\eta$. Denote the homotopy class of  this arc (rel $\eta$) by  $\delta_{i}$   in 
$\Sigma_{i}$.
 We denote by $\ell_{\delta_i}(X)$, the $X$-length of the orthogeodesic in the homotopy class of $\delta_i$.
Let $\delta_i'$ be $\delta_i$ with its initial and final segment in the natural collar about $\eta$ cut-off. Let $\mathcal{C}$ be this collar and denote the two boundary components of $\mathcal{C}$ as $\partial_{+} \mathcal{C}$ and $\partial_{-} \mathcal{C}$ .  Hence, $\ell_{\delta_i} = \ell_{\delta_i'} + 2\text{r} \left (\frac{\ell_{\eta}}{2} \right )$.
 
 Now, the $X$-geodesic length of $\gamma$ is at most as long as the following piecewise smooth curve which is homotopic to it. In the following, we construct this piecewise curve step-by-step, where the pieces of the curve are made of $X$-geodesics and segments of the boundary of $\mathcal{C}$ (See Figure  \ref{fig:constructing piecewise curve}). Next to each step, we provide the length bound on that piece.  Let $q$ be the intersection point of $\gamma_0$ with $\partial_{+} \mathcal{C}$.

 \begin{center}\begin{table}

\begin{tabular}{ | m{1em} | m{10 cm}| m{2.5cm}| } 
		\hline \hline 
		{\bf} & {\bf Pieces} &{\bf Length} \\
		\hline
		\hline 
		\hline
		1&Start at $q \in \partial_{+}\mathcal{C}$ and travel along $\partial_{+} \mathcal{C}$ to get to the starting point of $\delta_1'$  &$\leq K $ \\
		\hline
		2& Travel along $\delta_1'$ & $\leq D$ \\
        \hline
         3& Travel along the orthogeodesic from $\partial_{+}\mathcal{C}$ to $\partial_{-}\mathcal{C}$  starting at endpoint of $\delta_1'$& $2 \text{r} \left (\frac{\ell_{\eta}}{2} \right ) $ \\
         \hline
         4 & Travel along $\partial_{-} \mathcal{C}$ to the start of $\delta_2'$ & $\leq K $ \\
         \hline
         5 & Traverse along $\delta_2'$ & $\leq D$\\
         \hline
         6 & For $n=1$ go to step 8; for $n > 1$ from endpoint of $\delta_2'$ traverse orthogeodesic from $\partial_{-}\mathcal{C}$ to $\partial_{+}\mathcal{C}$ & $2\text{r} \left (\frac{\ell_{\eta}}{2} \right )$\\
         \hline
         7 & Repeat steps 1 through 6 $(n-1)$-times & \\
         \hline
         8 & After the  $n^\text{th}$ time finishing on $\partial_{-}\mathcal{C} $, wrap around $\eta $ $m$-times and end at $\partial_{+}\mathcal{C}$& $f_{m+1}(\ell_{\eta}(X))$ \\
         \hline
        9 & Might possibly have to traverse along $\partial_{+}\mathcal{C}$ to get back to $q$ &$\leq K$\\
         \hline
        
        \end{tabular}
    
         \caption{Piecewise geodesic}
         \label{table:2}
        \end{table}
         \end{center}

 %%%%%%%%%%FIGURE;%%%%%%%%%%%%
\begin{figure}

\includegraphics[scale=0.4]{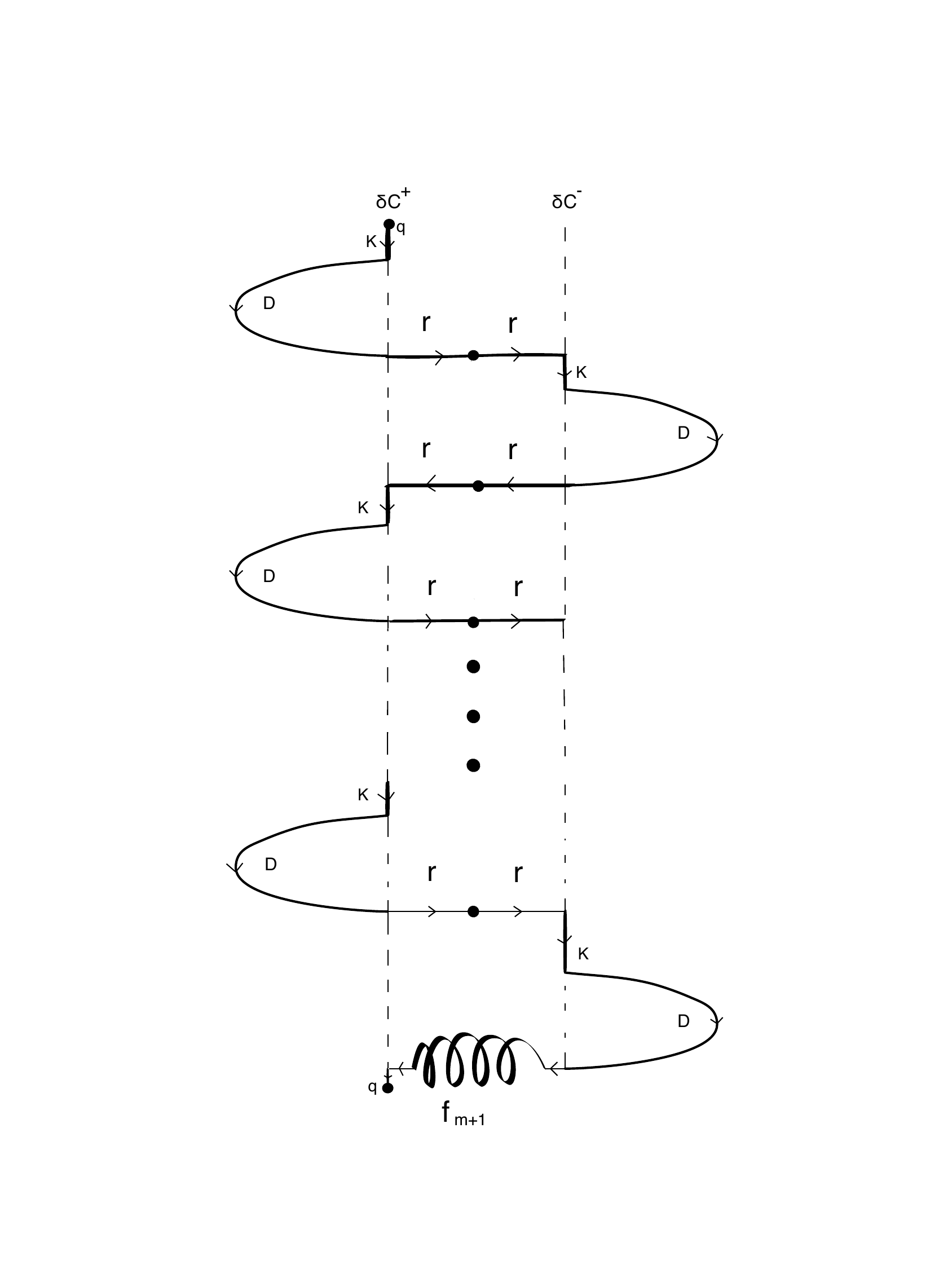}
%\pu\mathcal{T}(0,36){}
%\pu\mathcal{T}(9.5,11){$2$}
%\pu\mathcal{T}(42.5,6.5){$S_D$}
%\pu\mathcal{T}(33,20){$\gamma$}
\vspace{-30pt}
\caption{Constructing a piecewise curve homotopic to $\gamma$.}
\label{fig:constructing piecewise curve}
\end{figure}
%%%%%%FIGURE%%%%%%%%%%%%%

 The upper bound in item (3) can be obtained by summing the lengths of the pieces in the steps described in Table \ref{table:2}.
Steps (1) through (7) contribute:  $$(n-1)\left( \left[2D+ 4r\left(\frac{\ell_{\eta}(X)}{2}\right)+2 K\right]\right )$$
where $D$ is the constant that appears in Lemma  $\ref{lem: arc bounded}$, and $K$ is a universal constant arising from the fact that  the length of the boundary of a collar about $\eta$ is upper bounded since 
$X \in \mathcal{T}_{\epsilon}(\Sigma)$.

For the $n^\text{th}$ time we repeat steps (1)-(5), which contribute $$ \left(\left[2D+ 2r\left(\frac{\ell_{\eta}(X)}{2}\right)+2 K\right]\right ).$$ 

We are now at a point on $\partial_{-} \mathcal{C}$. We finish by wrapping around $\eta$ $m$-times and ending at 
$\partial_{+} \mathcal{C}$ (step 8).
Finally,  we move along $\partial_{+} \mathcal{C}$ to the start point $q$ (step 9).

Adding all the contributions, gives us 
 
 \begin{align*}
 \begin{split}
    \ell_{\gamma_{m,n}}(X) \leq & 
 (n-1)\left[2D+ 2K +4\, \text{r}\left(\frac{\ell_{\eta}(X)}{2}\right)   \right]\\
    &+\left[2D+ 2K +2\, \text{r}\left(\frac{\ell_{\eta}(X)}{2}\right)   \right]+ f_{m+1}(\ell_{\eta}(X)) + K.
    \end{split}
 \end{align*}
   
\end{proof}

\begin{rem}
    Our construction can be adapted to construct other families of filling curves by starting with a different filling curve or a different separating curve.
\end{rem}
%%%%%%%%%%FIGURE;%%%%%%%%%%%%

\begin{lem}\label{lem:etagoestozero} 
Let $\gamma_{m,n} \in \pi_1 (\Sigma,p)$
be the curve $\eta^{m}\ast  \gamma_{0}^{n}$. Then
\begin{enumerate}
\item  For $n$ bounded above, $\ell_{\eta}(X_{\gamma_{m,n}}) \rightarrow 0$, as $m \rightarrow \infty$.
\item For $m$ bounded above, 
$\liminf_{n \rightarrow \infty} \ell_{\eta}(X_{\gamma_{m,n}}) \geq C$, where $C$ is a constant that only depends on $\gamma_{0}$.
\end{enumerate}
\end{lem}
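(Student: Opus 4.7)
The plan is to sandwich $m_{\gamma_{m,n}}$ between the lower bound of Proposition \ref{prop: separating}(2) evaluated at the optimal metric $X_{\gamma_{m,n}}$ and the upper bound of Proposition \ref{prop: separating}(3) evaluated at a well-chosen test metric, then read off the behavior of $\ell := \ell_\eta(X_{\gamma_{m,n}})$. On the lower side I would first discard the nonnegative systole contribution from Proposition \ref{prop: separating}(2) and apply Lemma \ref{lem: difference bounded}(1) to $f_{m-2}(\ell)$, collapsing it to the clean inequality
$$
m_{\gamma_{m,n}} \;\geq\; 4n\, r(\ell/2) + (m-2)\,\ell - 4\log 2.
$$
On the upper side I would use a one-parameter test family $Y_t \in \mathcal{T}_\epsilon(\Sigma)$ obtained by freezing the Fenchel--Nielsen coordinates on $\Sigma_1$ and $\Sigma_2$ as well as the twist along $\eta$, and setting $\ell_\eta(Y_t) = t$. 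Since $\mathrm{sys}(Y_{t,i})$ is determined by the frozen FN data on $\Sigma_i$, this family lies in $\mathcal{T}_\epsilon(\Sigma)$ for all $t \in (0,1]$ with $\epsilon$ independent of $t$.

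For item (1), fix $n$ and set $t_m := 2\,\mathrm{arcsinh}(1/(m+1))$, the minimizer from Lemma \ref{lem: calculus} of $2r(x/2) + (m+1)x$. A direct computation gives $r(t_m/2) = \mathrm{arcsinh}(m+1) = O(\log m)$, and Lemma \ref{lem: difference bounded}(1) then yields $f_{m+1}(t_m) = O(\log m)$. Plugging these into Proposition \ref{prop: separating}(3) at $Y_{t_m}$ gives $m_{\gamma_{m,n}} \leq C_n \log m$ with $C_n$ independent of $m$. Substituting into the lower bound and dropping the nonnegative $4n\, r(\ell/2)$ term forces $(m-2)\,\ell \leq C_n \log m + 4\log 2$, which gives $\ell_\eta(X_{\gamma_{m,n}}) \to 0$ as $m \to \infty$.

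For item (2), fix $m$ bounded and apply Proposition \ref{prop: separating}(3) at the single reference metric $Y_1$ drawn from the family above, obtaining a linear-in-$n$ bound
$$
m_{\gamma_{m,n}} \;\leq\; C_1\, n + C_2(m),
$$
where $C_1 = 2D + 2K + 4\, r(1/2)$ is independent of $m$ and $C_2(m)$ is uniformly bounded for $m$ in any bounded set (its only $m$-dependence is through $f_{m+1}(1)$). Dropping the nonnegative $(m-2)\ell$ term from the lower bound yields $4n\, r(\ell/2) \leq C_1\, n + C_2(m) + 4\log 2$, so $r(\ell/2) \leq C_1/4 + O(1/n)$. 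Taking $\liminf_{n\to\infty}$ and using that $r$ is a strictly decreasing continuous bijection onto $(0,\infty)$ yields $\liminf_{n\to\infty}\ell_\eta(X_{\gamma_{m,n}}) \geq 2\, r^{-1}(C_1/4) =: C > 0$, a constant depending only on the fixed data attached to $\gamma_0$. The main subtlety is calibrating the test family in item (1): the upper bound on $m_{\gamma_{m,n}}$ must grow only logarithmically in $m$, which is exactly what the $m$-dependent choice of $t_m$ from Lemma \ref{lem: calculus} delivers; a cruder pinching rate would not suffice.
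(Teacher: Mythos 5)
Your proposal is correct and takes essentially the paper's approach: both arguments sandwich $m_{\gamma_{m,n}}$ between the collar/winding lower bound (Proposition \ref{prop: separating}(2) together with Lemma \ref{lem: difference bounded}) evaluated at the optimal metric and the upper bound of Proposition \ref{prop: separating}(3) evaluated at a comparison metric with $\ell_\eta \approx 1/m$ for item (1) and $\ell_\eta = 1$ for item (2). The paper phrases both items as proofs by contradiction whereas you extract the quantitative bounds directly (a cosmetic improvement), and the only patches your write-up needs are trivial: for $m=1$ the term $(m-2)\ell$ is not nonnegative (though harmless, and Proposition \ref{prop: separating}(2) is stated for $m\geq 2$ anyway), and the uniform systole bound for your frozen Fenchel--Nielsen test family holds because the pieces converge to the cusped structure as $t\to 0$, not because their systoles are literally independent of $t$.
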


\begin{proof} For ease of notation in this proof we use $X_{m,n}$ to denote the optimal metric for $\gamma_{m,n}$.
In  the natural collar about 
$\eta$ in  $X_{m,n}$ we obtain the following lower bound 
$\ell_{\gamma_{m,n}}(X_{m,n}) \geq $
\begin{equation}\label{eq: lower bound}
 2(2n-1) r\left(\frac{\ell_{\eta}(X_{m,n})}{2}\right)
+2  r\left(\frac{\ell_{\eta}(X_{m,n})}{2}\right)
+(m-2) \ell_{\eta}(X_{m,n}) -C
\end{equation}
The first term is from the $2n-1$ threads of $\gamma_{m,n}$ that pass through the natural collar, and the final three terms come from the contribution of the final thread that winds $m$ times around $\eta$ (see Lemmas
\ref{lem:winding deep in a collar} and Lemma
\ref{lem: difference bounded}). 

We next derive an upper bound for the  $X_{m,n}$-length of $\gamma_{m,n}$ by using a comparison metric, $X  \in 
\mathcal{T}_{\epsilon}$ for some small fixed $\epsilon >0$. Applying item (3) of Proposition \ref{prop: separating}
and Lemmas \ref{lem:winding deep in a collar} and 
\ref{lem: difference bounded} we obtain the following upper bound on the $X$-length of $\gamma_{m,n}$

\begin{align}\label{al: upper bound}
 \begin{split}
    \ell_{\gamma_{m,n}}(X) \leq & 
 (n-1)\left[2D+ 2K +4\, \text{r}\left(\frac{\ell_{\eta}(X)}{2}\right)   \right]\\
    &+\left[2D+ 3K +2\, \text{r}\left(\frac{\ell_{\eta}(X)}{2}\right)   \right]+ f_{m+1}(\ell_{\eta}(X)) \\
   &\leq  4n \text{r}\left(\frac{\ell_{\eta}(X)}{2}\right)  +2nD+(2n+1)K+(m+1)\ell_{\eta}(X)
   \end{split}
 \end{align}
where in the last inequality we have combined terms and used 
$$
f_{m+1}(\ell_{\eta}(X))\leq (m+1)\ell_{\eta}(X) +2\text{r}\left(\frac{\ell_{\eta}(X)}{2}\right).
$$

Now if $n$ is bounded above,  assume  $\ell_{\eta}(X_{m,n})$ is bounded below for the sake of contradiction. 
  Then   the lower bound  in inequality (\ref{eq: lower bound}) grows at least linearly in $n$.
  On the other hand, for the upper bound we choose a comparison metric $X \in \mathcal{T}_{\epsilon}$ so that   the $X$-length, $\ell_{\eta}(X)=\frac{1}{m+1}$.  Plugging into the  upper bound  (\ref{al: upper bound}) 
  and using the fact that $r(\frac{x}{2}) \sim \log \frac{1}{x}$ as $x$ goes to infinity we see that the upper bound is grows at most like $\log m$. This is a contradiction. Thus 
  $\ell_{\eta}(X_{m,n}) \rightarrow 0$, as $m \rightarrow \infty$. This proves item (1).

  To prove item (2), suppose  $m$ is bounded above and assume for the sake of contradiction that 
  $\ell_{\eta}(X_{m,n}) \rightarrow 0$,  as $n \rightarrow \infty$. Set $f(n):=\ell_{\eta}(X_{m,n})$. Then the lower bound inequality (\ref{eq: lower bound}) grows at least like $4n\log \frac{1}{f(n)}$.  For the upper bound inequality (\ref{al: upper bound}) choose a  comparison $X \in \mathcal{T}_{\epsilon}$ where 
  $\ell_{\eta}(X)=1$. Then the upper bound grows at most like   a linear function in $n$. This is a contradiction for large $n$ proving item (2).
\end{proof}

\section{Designer metrics} \label{sec: designer metrics}

Let $\Sigma$ be a surface of genus $g$ with $g \geq 2$. In Section \ref{sec: curve family} we constructed a family of filling curves $\{\gamma_{m,n}\}$, given by two integer parameters.  Using this family,  we construct an infinite  curve pair family $\{(\alpha_{k},\beta_{k})\}_{k \in \mathcal{K}}$, where 
\begin{itemize}
    \item $\alpha_k$ winds around the minimal filling curve once and the separating curve multiple times
    \item $\beta_k$ winds around the minimal curve multiple times and once around the separating one
    \item $\alpha_k$ and $\beta_k$ have the same self-intersection number $k$. 
\end{itemize}

More precisely their definitions are:

\vskip10pt

\noindent{\bf The $\beta_{k}$ curve:}  For  $m=1$ and  any
$n \in \mathbb{N}$,  denote the curve $\gamma_{1,n}$ by $\beta_{k}$,
where  by item (1) of Proposition \ref{prop: separating}

\begin{equation}\label{eq:intersection k}
k:=i(\beta_{k}, \beta_{k})=(2g-1)n^{2} +2n - 1
 \end{equation}
Now we can express $n$ in terms of $k$ as $$n =\frac{-1 + \sqrt{2g + (2g -1)k }}{2g-1}$$

\vskip10pt
\noindent{\bf The $\alpha_{k}$ curve:} For $n=1$ and 
$m = k-2g+1$, denote the curve $\gamma_{m,1}$ by $\alpha_{k}$.
Using item (1) of Proposition 
\ref{prop: separating} we obtain 

$$i(\alpha_{k}, \alpha_{k})=(2g-1) +2m - m = k$$

Set $$\mathcal{K} = \{(2g-1)n^{2} +2n - 1 : n \in \mathbb{N} \}$$ 
This is the set of all self-intersection  values  $k$ obtained in Equation \ref{eq:intersection k} as $n$ runs through the natural numbers.

In this section, we focus on the optimal  metrics associated with these curves. These so-called designer metrics have geometric features that reflect the behavior of the curves (and hence the geodesics) they represent. 

%For ease of notation we set $\alpha =\alpha_{k}$, %$X_{\alpha}$ to be its corresponding optimal metric, and  %$\ell =\ell_{\eta}(X_{\alpha})$. 

\begin{thm}[Optimal metrics] 
\label{thm: sepoptimalmetric}

Suppose $ \Sigma$ is a closed surface of genus $g \geq 2$.  Let  $\{(\alpha_{k},\beta_{k})\}_{k \in \mathcal{K}}$ be the infinite collection of curve pairs as defined above. Then

\begin{enumerate}

\item the optimal metrics $\{X_{\beta_{k}}\}_{k  \in \mathcal{K}}$ satisfy
$$ \liminf_{k \rightarrow \infty} sys(X_{\beta_{k}}) 
\geq 2 r\left(\frac{\ell_{{\gamma}_{0}}(X_{\gamma_0})}{2}\right).$$
In particular, the metrics $\{X_{\beta_{k}}\}_{k  \in \mathcal{K}}$
are contained in a  compact subspace of $\mathcal{M}(\Sigma)$.

\item  There is a constant $C=C(\gamma_0)>0$ so that   
\begin{displaymath}
\textup{sys}\left(X_{\alpha_{k}}-\{\eta\}\right) \geq C \text{ for  any  $k  \in \mathcal{K}$}.
\end{displaymath}
Moreover,  The optimal metrics $\{X_{\alpha_{k}}\}_{k  \in \mathcal{K}}$ limit  to the stratum $\mathcal{S}$ in $\partial\mathcal{M}(\Sigma)$ which
 correspond  to  $\eta$ being pinched.
\end{enumerate}

\end{thm}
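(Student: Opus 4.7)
The plan is to bound $m_{\alpha_k}$ and $m_{\beta_k}$ both from above, by evaluating the length function on a carefully chosen comparison metric, and from below, by applying Proposition~\ref{prop: separating}(2) at the optimal metric itself, and then to compare the two estimates to extract geometric information about the optimal metrics.

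For item (1), I would take the comparison metric to be $X_{\gamma_0}$, the optimal metric for $\gamma_0$. The piecewise-geodesic representative of $\beta_k = \eta \ast \gamma_{0}^{n}$ at the basepoint yields
\[
m_{\beta_k} \;\leq\; \ell_{\beta_k}(X_{\gamma_0}) \;\leq\; n\,\ell_{\gamma_0}(X_{\gamma_0}) + \ell_\eta(X_{\gamma_0}),
\]
while Proposition~\ref{prop: separating}(2), with the $f_{m-2}$ winding term suppressed since here $m = 1$, supplies
\[
m_{\beta_k} \;\geq\; 2n\bigl[r(\text{sys}(X_{\beta_k,1})/2) + r(\text{sys}(X_{\beta_k,2})/2)\bigr] + (4n-2)\,r(\ell_\eta(X_{\beta_k})/2).
\]
Dividing by $2n$ and taking $\liminf$ as $n \to \infty$, the $O(1/n)$ terms vanish; using that $r$ is its own inverse (from $\sinh x \sinh r(x) = 1$) and decreasing, one obtains $\liminf \text{sys}(X_{\beta_k,i}) \geq 2 r(\ell_{\gamma_0}(X_{\gamma_0})/2)$ for each $i$. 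The parallel comparison applied to the collar term yields an even larger lower bound on $\liminf \ell_\eta(X_{\beta_k})$. Since every essential simple closed geodesic on $X_{\beta_k}$ either lies in some $\Sigma_i$, is isotopic to $\eta$, or crosses $\eta$ (and so has length at least $2r(\ell_\eta(X_{\beta_k})/2)$), the global systole inequality follows, and the metrics stay in a compact part of $\mathcal{M}(\Sigma)$ by Mumford compactness.

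For item (2), set $m = k - 2g + 1$, and take as comparison metric any $X \in \mathcal{T}_\epsilon(\Sigma)$ with $\ell_\eta(X)$ of order $1/m$. Proposition~\ref{prop: separating}(3) at $n = 1$, combined with Lemma~\ref{lem: difference bounded}(1) bounding $f_{m+1}(\ell_\eta)$ from above and Lemma~\ref{lem: calculus} to minimize $4r(\ell_\eta/2) + (m+1)\ell_\eta$, gives $m_{\alpha_k} \leq 4\log m + O(1)$. On the other hand, Proposition~\ref{prop: separating}(2) at $n = 1$, combined with Lemma~\ref{lem: difference bounded}(1) used from below on $f_{m-2}(\ell_\eta)$ and Lemma~\ref{lem: calculus} applied to the $\ell_\eta(X_{\alpha_k})$-dependent terms, yields
\[
m_{\alpha_k} \;\geq\; 2\bigl[r(s_1/2) + r(s_2/2)\bigr] + 4\log m - O(1),
\]
where $s_i = \text{sys}(X_{\alpha_k,i})$. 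The $4\log m$ terms cancel, forcing $r(s_i/2)$ to stay bounded, and hence $s_i \geq C(\gamma_0) > 0$ since $r$ is decreasing. Together with Lemma~\ref{lem:etagoestozero}(1), which gives $\ell_\eta(X_{\alpha_k}) \to 0$, this shows that $\{X_{\alpha_k}\}$ leaves every compact part of $\mathcal{M}(\Sigma)$ only through the stratum on which $\eta$ is pinched.

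I expect the main obstacle to be matching the leading $\log m$ contributions on the two sides of item (2). Both the upper-bound optimization over the length of $\eta$ in the comparison metric and the lower-bound minimization of the $\ell_\eta(X_{\alpha_k})$-dependent combination rest on Lemma~\ref{lem: calculus}, and they must produce the same coefficient (which comes out to $4$) for the gap to collapse to $O(1)$ and for the systole estimate to survive. A secondary technical point is that Proposition~\ref{prop: separating}(2) is stated for $m \geq 2$, so item (1) requires a small adaptation: the arc contribution $2n[r(s_i/2)]$ and the collar contribution $(4n-2)r(\ell_\eta/2)$ remain valid verbatim, only the $f_{m-2}$ winding term has to be dropped.
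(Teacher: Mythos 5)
Your treatment of item (2) is correct and takes a route genuinely different from the paper's: the paper never produces a $\log m$ term at all --- it bounds the thick portion $a_1'+a_2'$ of the optimal geodesic by a constant $D_0(\gamma_0)$ and cancels the winding contributions $f_{m+1}(\ell_\eta(X_{\alpha_k}))-f_{m-2}(\ell_\eta(X_{\alpha_k}))$ at the \emph{same} metric via Lemma~\ref{lem: difference bounded}(2) --- whereas you optimize the comparison metric's $\ell_\eta$ with Lemma~\ref{lem: calculus} to get $m_{\alpha_k}\leq 4\log m+O(1)$ and extract the matching $4r\left(\frac{\ell_\eta}{2}\right)+(m-2)\ell_\eta\geq 4\log m-O(1)$ from Proposition~\ref{prop: separating}(2) plus Lemma~\ref{lem: difference bounded}(1). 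The leading coefficients do match ($4\operatorname{arcsinh}\frac{m+1}{2}$ versus $4\operatorname{arcsinh}\frac{m-2}{2}$, difference $O(1)$), so the systole bound on $X_{\alpha_k}-\{\eta\}$ survives; note this is sharper bookkeeping than the paper's own Proposition~\ref{prop: bounds on alpha}, whose $6\log k$ upper and $2\log k$ lower bounds would not cancel.

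Item (1), however, has a genuine gap in the last step. Your lower bound via Proposition~\ref{prop: separating}(2) controls only $\textup{sys}(X_1)$, $\textup{sys}(X_2)$ and $\ell_\eta(X_{\beta_k})$ \emph{from below}; to handle the third case of your trichotomy --- a simple closed geodesic $\sigma$ crossing $\eta$ --- you invoke the collar width $2r\left(\frac{\ell_\eta(X_{\beta_k})}{2}\right)$, but since $r$ is decreasing this is only useful if $\ell_\eta(X_{\beta_k})$ is bounded \emph{above}, and no such upper bound is established (nor does one follow from your inequalities: for $m=1$ there is no term in the lower bound that grows with $\ell_\eta$). Nothing in your argument rules out $\ell_\eta(X_{\beta_k})\to\infty$ together with a very short geodesic crossing $\eta$, which would violate the claimed global systole bound. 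The paper avoids this case analysis entirely: it uses that $\beta_k=\eta\ast\gamma_0^n$ crosses \emph{every} essential simple closed curve roughly $n$ times (because $\gamma_0$ is filling), so in particular it crosses the collar of the actual systole of $X_{\beta_k}$, wherever it lies, giving $m_{\beta_k}\geq 2n\,r\left(\frac{\textup{sys}(X_{\beta_k})}{2}\right)$ directly, which against the comparison bound $n\ell_{\gamma_0}(X_{\gamma_0})+\ell_\eta(X_{\gamma_0})$ yields the stated $\liminf$. To repair your proof you would need either that crossing-count argument or an independent upper bound on $\ell_\eta(X_{\beta_k})$; as written the deduction of the global systole inequality does not go through.
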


\begin{proof}[Proof of Theorem \ref{thm: sepoptimalmetric}]

    To prove item (1),  first recall that $\beta_k=\eta \ast  \gamma_{0}^{n}$ where 
$n= \frac{-1 + \sqrt{2g + (2g -1)k}}{2g-1}$, 
and $\beta_k$ crosses every simple closed curve $n \approx \sqrt{k}$ times. Now to simplify notation set  
$s_1=sys(X_{\beta_k})$.  We have

\begin{equation}\label{eq: lower bound with epsilon}
m_{\beta_k}:=\ell_{\beta_k} (X_{\beta_k}) \geq 2n r\left(\frac{s_1}{2}\right) = 2 \left(\frac{-1 +\sqrt{2g + (2g -1)k}}{2g-1} \right)r\left(\frac{s_1}{2}\right).
\end{equation}

On the other hand, 
\begin{align}
m_{\beta_k} 
=\ell_{\beta_k} (X_{\beta_k})\leq \ell_{\beta_k} (X_{\gamma_0}) \leq 
 n\ell_{\gamma_0} (X_{\gamma_0})+\ell_{\eta}(X_{\gamma_0})
 \\  \leq 
 \left(\frac{-1 +\sqrt{2g + (2g -1)k}}{2g-1}\right) 
 \ell_{\gamma_0} (X_{\gamma_0})+\ell_{\eta}(X_{\gamma_0})
\end{align}
where the last inequality follows from the fact that 
$\beta_k=\eta \ast  \gamma_{0}^{n}$ and 
$$n= \frac{-1 +\sqrt{2g + (2g -1)k}}{2g-1}.$$ 

Putting these inequalities together we have 

\begin{equation}\label{eq: double inequality}
2 n r\left(\frac{s_1}{2}\right)
\leq m_{\beta_k}  
\leq 
n
 \ell_{\gamma_0} (X_{\gamma_0})+\ell_{\eta}(X_{\gamma_0})
\end{equation}

Solving for $s_1$ in (\ref{eq: double inequality}) gives the lower bound

$$
\textup{sys}(X_{\beta_k}) 
:= s_{1}\geq  2 r \left(\frac{\ell_{{\gamma}_{0}}(X_{\gamma_0})}{2} 
+\frac{\ell_{\eta}(X_{\gamma_0})}{2\left(\frac{-1 +\sqrt{2g + (2g -1)k}}{2g-1}\right) } \right).
$$

Thus, 
$$\liminf_{k \rightarrow \infty} \textup{sys}(X_{\beta_k}) 
\geq 2 r \left(\frac{\ell_{{\gamma}_{0}}(X_{\gamma_0})}{2}\right)
$$
Finally, the Mumford compactness theorem implies that the metrics 
$\{X_{\beta_{k}}\}_{k  \in \mathcal{K}}$ are contained in a compact subspace of moduli space \cite{farb2011primer}.

%%%%%%%%%%FIGURE;%%%%%%%%%%%%
\begin{figure}
\includegraphics[width=.87\linewidth]{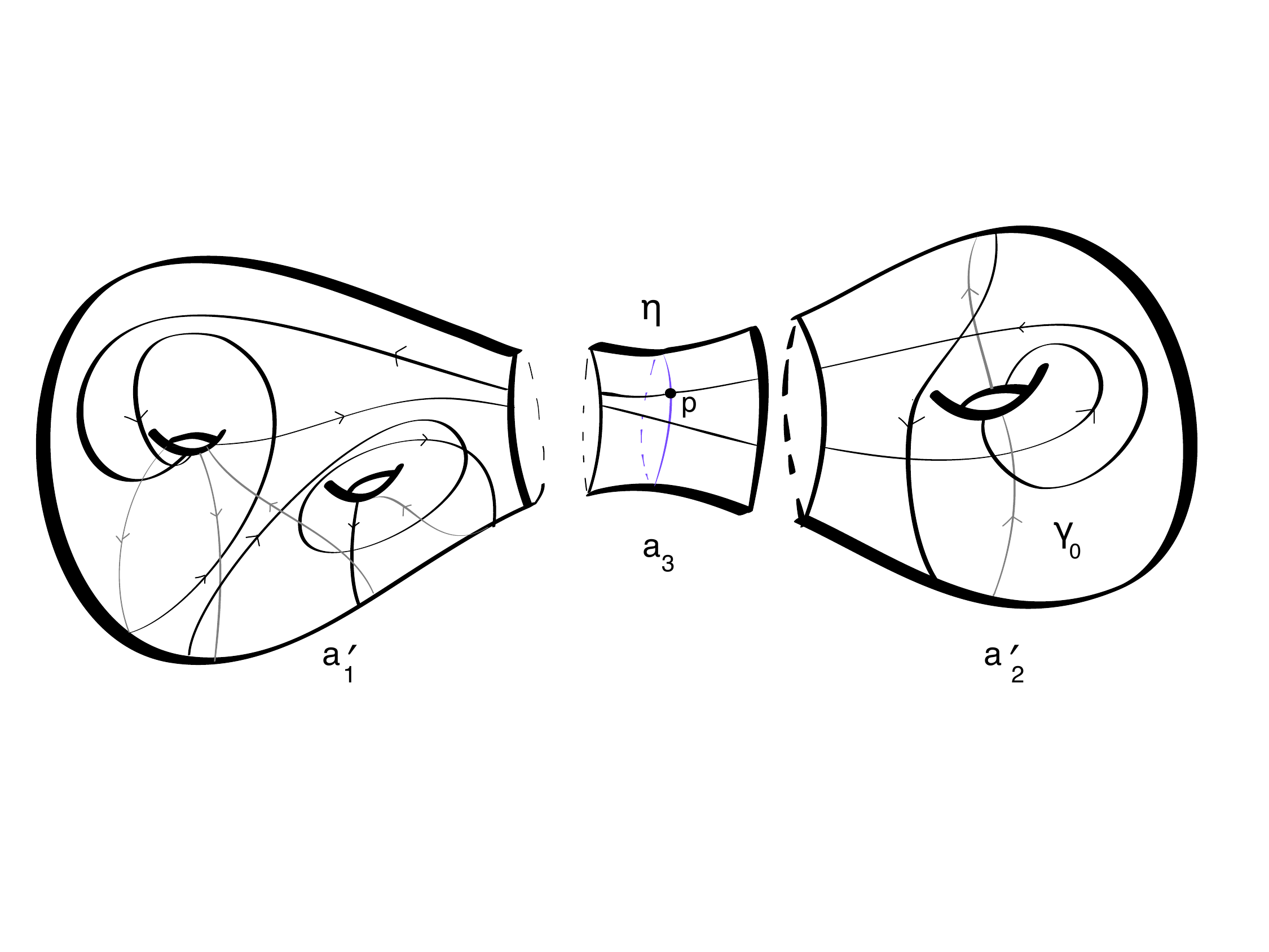}
%\pu\mathcal{T}(0,36){}
%\pu\mathcal{T}(9.5,11){$2$}
%\pu\mathcal{T}(42.5,6.5){$S_D$}
%\pu\mathcal{T}(33,20){$\gamma$}
\vspace{-80pt}
\caption{Thick-thin decomposition }
\label{fig:separating case split}
\end{figure}
%%%%%%FIGURE%%%%%%%%%%%%%

To prove item (2),  let $X_{1}$ and $X_{2}$ be the left and right components of $X_{\alpha_{k}}-\{\eta\}$.
We need to show  that outside the natural collar of  
$\eta$  the optimal metric for the $\alpha_{k}$-curve  has  a lower bound on its systole which is independent of $k$ and only depends on the initial filling curve  $\gamma_{0}$. To this end,  we partition the 
$X_{\alpha_{k}}$-geodesic in the homotopy class of $\alpha_{k}$ 
into 3 geodesic segments  $a_1^{\prime}, a_3, a_2^{\prime}$ as in Figure \ref{fig:separating case split}. The geodesics  $a_1^{\prime}$ and $a_2^{\prime}$  are in the so-called thick part of $X_{\alpha_{k}}$, and 
$a_3$  in the so-called thin part.
 By abuse of notation, we will use the same symbols to denote the lengths of these parts. Thus, 
$\ell_{\alpha_k}(X_{\alpha_k})= a_{1}^{\prime}$+ $a_{3}$ + $a_{2}^{\prime}  $.

First we get bounds on $\alpha_{k}$ in the thick part
of $X_{\alpha_{k}}$. We know from Lemma \ref{lem:etagoestozero}, that $\ell_{\eta}(X_{\alpha_k})$ for $k$ large is  $ \leq 1$. Assuming $k$ is large, choose $\epsilon >0$, so that
$X_{\alpha_k} \in \mathcal{T}_{\epsilon}(\Sigma)$.

Next,  choose a comparison metric $X \in T_{\epsilon}(\Sigma)$
and let $\delta^{\prime}_{1}$ be the orthogeodesic from the boundary of the $\eta$-collar to itself in the same relative free homotopy class of $a_1^{\prime}$. Then note  that $a_{1}^{\prime}$ is less than the length of the piecewise curve given by going from the start point of $a_{1}^{\prime}$ along the boundary of the collar to the start point of $\delta_{1}^{\prime}$, followed by doing $\delta_{1}^{\prime}$,  and then going back to the end point of $a_1'$ along the boundary of the collar.
Now, since $\ell_{\eta}(X)\leq {1}$, the length of the boundary of the $\eta$-collar is bounded from above by a positive constant $E$. This is because, $E \rightarrow 2$ as $\ell_{\eta} \rightarrow 0$.
Putting these facts together and using Lemma
\ref{lem: arc bounded}, we  conclude that $a_1' \leq D_1 + E$. Similarily, $a_2' \leq D_2 + E$  where the constants $D_1$ and $D_2$ depend only on $\gamma_0$. Putting these upper bounds together we  have that the length of $\alpha_{k}$ in the thick part satisfies
\begin{equation}\label{eq:thickpart upper}
a_{1}^{\prime} + a_{2}^{\prime} \leq D_0
\text{    where } D_0=2E + D_1 + D_2.
\end{equation}

For ease of notation, set $s_2 :=\text{min} \big\{\textup{sys} (X_{1}),
\textup{sys}(X_{2})\big\}$.
To realize  a lower bound on $a_{1}^{\prime} + a_{2}^{\prime}$, note that
since $a_{1}^{\prime}$ or $a_{2}^{\prime}$ must cross the systole,  by the collar lemma we have
\begin{equation}\label{eq:thickpart lower}
a_{1}^{\prime} + a_{2}^{\prime}
>2r\left(\frac{s_2}{2}\right).
\end{equation}

We next get bounds on $\alpha_{k}$ in the thin part,
that is on $a_3$. Recall,  that $a_3$ has two strands, one which winds 
$m=k-2g+1$ times around $\eta$ and the other that passes through the natural collar of $\eta$.
Then, by the Collar lemma and Lemma \ref{lem:winding deep in a collar}, for any $X \in \mathcal{T}_{\epsilon}$ we have

\begin{equation} \label{eq:thinpart bound}
  2 r \left( \frac{\ell_{\eta}} {2}\right) + f_{m-2}(\ell_{\eta})\leq 
 a_3  
 \leq
 f_{m+1}(\ell_{\eta}) + 2 r \left(\frac{\ell_{\eta}} {2}\right) \text{  where}
\end{equation}
\begin{equation}
 f_{m}(x)=
 2\text{arccosh}\left[\coth \left(\frac{x}{2}\right)  \cosh \left(\frac{mx}{2} \right)\right]
\end{equation}
Putting together the bounds in the thick and thin parts (inequalities \ref{eq:thickpart upper}, \ref{eq:thickpart lower}, and \ref{eq:thinpart bound}) we obtain

\begin{align}
\begin{split}
  2r\left(\frac{s_2}{2}\right) + 2r\left(\frac{\ell_{\eta}(X_{\alpha_k})} {2}\right) +  f_{m-2}(\ell_{\eta}(X_{\alpha_k})) 
  &\leq 
 \ell_{\alpha_k}(X_{\alpha_{k}}) \\
 & \leq 2r\left(\frac{\ell_{\eta}(X_{\alpha_k})} {2}\right) +
D_0+f_{m+1}(\ell_{\eta}(X_{\alpha_k}))\\
\end{split}
\end{align}

Using the left and right above inequalities and solving for $s_2$ we obtain,

\begin{equation}\label{eq: sys bound}
s_2 \geq 2 r\left( \frac{1}{2}\left[D_0+f_{m+1}(\ell_{\eta}(X_{\alpha_k}))-f_{m-2}(\ell_{\eta}(X_{\alpha_k}))\right]\right)
\end{equation}

Now Lemma \ref{lem: difference bounded} guarantees that there is a lower bound on the right-hand side of inequality (\ref{eq: sys bound}) for large $m$ and $\ell_{\eta}$ bounded above. Combining this with the fact that the constant $D_0$ only depends on 
$\gamma_{0}$ yields a lower bound for $\textup{sys}\left(X_{\alpha_{k}}-\{\eta\}\right)$ whcih only depends on $\gamma_0$.

Finally, 
by Lemma \ref{lem:etagoestozero},  $\ell_{\eta}(X_{\alpha_k})$ goes to zero as $k \rightarrow \infty$, thus we may conclude that the optimal metrics 
$\{X_{\alpha_{k}}\}_{k  \in \mathcal{K}}$ limit  to the stratum $\mathcal{S}$ in $\partial\mathcal{M}(\Sigma)$ corresponding to pinching the curve
$\eta$.
\end{proof}
%%%%%%FIGURE%%%%%%%%%%%%%

\section{The inf  invariant and self-intersection}
\label{sec: the inf invariant and self-}

Fix $\Sigma$ a surface of the genus $g \geq 2$. Recall the construction of  the curve pairs $\{(\alpha_{k},\beta_{k})\}_{k \in \mathcal{K}}$ from Section \ref{sec: designer metrics}.
 We will use the coarse bounds constructed for the families of curves $\gamma_{m,n} =\eta^{m}\ast\gamma_{0}^{n}$  in Proposition \ref{prop: separating}, to get bounds on their inf invariants.  

\vskip10pt
\noindent{\bf Lower bound on the $\beta_{k}$ curve:}  Recall that the curve $\gamma_{1,n} = \beta_{k}$, where

$$n =\frac{-1 + \sqrt{2g + (2g -1)k}}{2g-1}$$

Let $X = X_{\beta_k}$ be the optimal metric for $\beta_k$.
Using the collar lemma for the collar of $\eta$ and the fact that there are $2n$ strands passing through the natural collar, we have

\begin{align}\label{eq: mbeta}
\begin{split}
& m_{\beta_{k}}:= \ell_{\beta_{k}}(X) \\
&\geq  2\text{n} \left[2\text{ r}\left(\frac{\ell_{\eta}(X)}{2}\right)\right] \\
&\geq  4\left( \frac{-1 + \sqrt{2g + (2g -1)k}}{2g-1} \right)\text{ r}\left(\frac{\ell_{\eta}(X)}{2}\right)\\
&\geq 4 \left(\frac{\sqrt{2g + (2g -1)k}}{2g-1} \right)c_{} - \frac{2c}{2g -1}\\
&\geq 4 \left(\sqrt{\frac{k}{2g-1}} \right)c_{}  - \frac{2c}{2g -1}\\
\end{split}
\end{align}
  where we have used $$n= \frac{-1 + \sqrt{2g + (2g -1)k}}{2g-1} .$$ 

   In the above inequalities the constant $c$ appears from the following argument. Since $\text{sys}(X_{\beta_{k}})$ has a lower bound that does not depend on $k$ by Theorem \ref{thm: sepoptimalmetric}, the optimal metrics $\{X_{\beta_{k}}\}$ are contained in a compact subspace of moduli space. Hence, $\ell_{\eta}$ being a continuous function has an upper bound on this subspace. Thus 
 $\text{r}\left(\frac{\ell_{\eta}(X)}{2}\right)$ has a lower bound, we denote it by ${c}$. 
  
 \vspace{20pt} 

We have shown

\begin{prop}\label{prop: bounds on beta}
 There exists a constant $c >0$ independent of $k$ so that   $$m_{\beta_{k}}= \ell_{\beta_{k}}(X) \geq 4 \left( \sqrt{\frac{k}{2g-1}} \right)c_{}  - \frac{2c}{2g -1}$$
\end{prop}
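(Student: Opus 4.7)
The plan is to exhibit a lower bound for $\ell_{\beta_k}(X_{\beta_k})$ by restricting attention to the portion of the geodesic representative of $\beta_k$ that lies in the standard collar about $\eta$, and then to leverage the earlier compactness result (Theorem~\ref{thm: sepoptimalmetric}(1)) to convert this into a bound that depends only on $k$ and the topology. Writing $\beta_k = \eta \ast \gamma_0^n$ with $n = \tfrac{-1+\sqrt{2g+(2g-1)k}}{2g-1}$, the first observation is that $\beta_k$ crosses $\eta$ a total of $2n$ times (once from the $\eta$ factor and $i(\gamma_0,\eta) = 2$ times for each copy of $\gamma_0$, after accounting for orientation). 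Thus the $X_{\beta_k}$-geodesic in the class of $\beta_k$ contains $2n$ subarcs, each traversing the natural collar $\mathcal{N}(\eta)$ from one boundary component to the other.

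Next, by the collar lemma each such crossing subarc has length at least $2\,r\!\left(\tfrac{\ell_\eta(X_{\beta_k})}{2}\right)$, so summing over the $2n$ strands gives
\begin{equation*}
m_{\beta_k} \;=\; \ell_{\beta_k}(X_{\beta_k}) \;\geq\; 4n\,r\!\left(\tfrac{\ell_\eta(X_{\beta_k})}{2}\right).
\end{equation*}
The main task is then to bound $r\!\left(\tfrac{\ell_\eta(X_{\beta_k})}{2}\right)$ from below by a positive constant that is independent of $k$. This is where I would invoke Theorem~\ref{thm: sepoptimalmetric}(1): the family $\{X_{\beta_k}\}_{k\in\mathcal{K}}$ lies in a compact subset of $\mathcal{M}(\Sigma)$, so the continuous function $\ell_\eta$ attains a finite maximum over this subset. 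Since $r$ is strictly decreasing with $r(x)\to \infty$ as $x\to 0$ and $r(x)\to 0$ as $x\to\infty$, an upper bound on $\ell_\eta(X_{\beta_k})$ immediately yields a positive lower bound $c>0$ on $r\!\left(\tfrac{\ell_\eta(X_{\beta_k})}{2}\right)$.

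To finish, I substitute the expression for $n$ in terms of $k$ and simplify. The inequality $\sqrt{2g+(2g-1)k} \geq \sqrt{(2g-1)k}$ distributes through to give
\begin{equation*}
4n\,c \;\geq\; \frac{4c}{2g-1}\bigl(\sqrt{(2g-1)k}\,\bigr) \;-\; \frac{4c}{2g-1} \;=\; 4c\sqrt{\tfrac{k}{2g-1}} \;-\; \tfrac{4c}{2g-1},
\end{equation*}
which after minor bookkeeping produces the stated inequality (up to a routine adjustment of the additive constant). The only real step of substance is the compactness argument, but that is already established upstream, so the remaining work is elementary algebra with the intersection-number formula from Proposition~\ref{prop: separating}(1). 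I do not anticipate any obstacle; the proposition should follow directly from these three ingredients.
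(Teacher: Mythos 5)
Your argument is correct and is essentially the paper's proof: the same lower bound from the $2n$ strands crossing the natural collar of $\eta$, each of length at least $2\,r\!\left(\frac{\ell_\eta(X_{\beta_k})}{2}\right)$, the same appeal to Theorem \ref{thm: sepoptimalmetric}(1) to get a $k$-independent positive lower bound $c$ on $r\!\left(\frac{\ell_\eta(X_{\beta_k})}{2}\right)$ via compactness in moduli space, and the same substitution of $n$ in terms of $k$. Your additive constant $-\frac{4c}{2g-1}$ is in fact what the computation yields (the paper's $-\frac{2c}{2g-1}$ is a harmless slip in bookkeeping), and the discrepancy is immaterial to the coarse bound $\sqrt{k}\lesssim m_{\beta_k}$.
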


\vskip20pt
\noindent{\bf Upper bound on the $\alpha_{k}$ curve:} Recall that the curve $\gamma_{m,1} = \alpha_{k}$,
where  
$$m = k-2g+1 $$

To simplify notation,  we set $Y_{\alpha_k}=X_{\alpha_k}-\{\eta\}$. That is,
$Y_{\alpha_k}$ is the optimal metric $X_{\alpha_k}$ restricted to the complement of the geodesic in the homotopy class of $\eta$.

\begin{prop} \label{prop: bounds on alpha}
Let $\Sigma$ be a closed surface with genus $\geq 2$. There exist positive  constants $c_{*}$ and $c_{**}$  so that 
$$ 2 r\left(\frac{\textup{sys}(Y_{\alpha_k})}{2}\right) + 2 \log (k-2g+1) -c_{**}
\leq m_{\alpha_{k}} \leq   c_* + 6 \log (k-2g+1) ,
 \text{ for $k$ large}.$$
 In particular,  
 \begin{align}
 \textup{sys}(Y_{\alpha_k}) \geq  
 2r \left(\frac{c_* +c_{**}+ 4\log (k-2g+1)}{2}\right)
 \end{align}
\end{prop}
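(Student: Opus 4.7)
The plan is to specialize Proposition \ref{prop: separating} to the pair $(m,n) = (k-2g+1, 1)$, since $\alpha_k = \gamma_{m,1}$ by definition. Item (2) of that proposition, evaluated at the optimal metric $X = X_{\alpha_k}$, will yield the lower bound; item (3), evaluated at a carefully chosen comparison metric in $\mathcal{T}_\epsilon(\Sigma)$, will give the upper bound. The systole inequality then falls out by subtracting the two bounds and inverting the function $r$.

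For the lower bound, I would plug $n=1$ into Proposition \ref{prop: separating}(2) to get
\begin{equation*}
m_{\alpha_k} \geq 2\bigl[r(\tfrac{\textup{sys}(X_{\alpha_k,1})}{2}) + r(\tfrac{\textup{sys}(X_{\alpha_k,2})}{2})\bigr] + 2 r(\tfrac{\ell_\eta(X_{\alpha_k})}{2}) + f_{m-2}(\ell_\eta(X_{\alpha_k})).
\end{equation*}
Because $\textup{sys}(Y_{\alpha_k})$ is the smaller of the two component systoles and $r$ is positive, the bracketed sum is at least $r(\textup{sys}(Y_{\alpha_k})/2)$. To control $f_{m-2}$ uniformly in $\ell_\eta$, Lemma \ref{lem: difference bounded}(1) gives $f_{m-2}(x) \geq 2r(x/2) + (m-2)x - 4\log 2$ for every $x>0$, and the infimum over $x$ of the right-hand side can be read off from Lemma \ref{lem: calculus} (with $b = m-2$) as $2\log(m-2) - c$ for a universal $c$. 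Substituting $m - 2 = k - 2g - 1$ and absorbing the $O(1/k)$ discrepancy between $\log(k-2g-1)$ and $\log(k-2g+1)$ into the constant $c_{**}$ yields the claimed lower bound.

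For the upper bound, I would fix $\epsilon>0$ and choose a comparison metric $X \in \mathcal{T}_\epsilon(\Sigma)$ with $\ell_\eta(X)$ of order $1/(m+1)$; this is feasible because $\Sigma_1$ and $\Sigma_2$ admit arbitrary thick hyperbolic structures independently of how small $\ell_\eta$ is pinched. Applying Proposition \ref{prop: separating}(3) with $n=1$ together with the upper half of Lemma \ref{lem: difference bounded}(1) yields $m_{\alpha_k} \leq \ell_{\alpha_k}(X) \leq A + 4 r(\ell_\eta(X)/2) + (m+1)\ell_\eta(X)$, where $A = A(\gamma_0, \epsilon)$. With $\ell_\eta(X) \asymp 1/(m+1)$ the linear term is $O(1)$, while $r(\ell_\eta(X)/2) \sim \log(m+1)$ because $r(x) \sim \log(1/x)$ as $x\to 0^+$. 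The total sits comfortably inside the stated form $c_* + 6\log(k-2g+1)$.

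To finish, subtracting the two bounds gives $2 r(\textup{sys}(Y_{\alpha_k})/2) \leq c_* + c_{**} + 4\log(k-2g+1)$. Since $r$ is strictly decreasing on $(0,\infty)$ and is its own inverse (the relation $\sinh(x)\sinh(r(x))=1$ is symmetric in $x$ and $r(x)$), applying $r$ to both sides flips the inequality and produces the desired lower bound on $\textup{sys}(Y_{\alpha_k})$. The main friction I expect is the constant bookkeeping and checking that the chosen comparison metric genuinely lies in $\mathcal{T}_\epsilon(\Sigma)$ for all large $k$; both are routine once one uses the freedom to independently prescribe $\ell_\eta$ and the thick geometry of the two subsurfaces.
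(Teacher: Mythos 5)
Your proposal is correct and follows essentially the same route as the paper: the lower bound comes from the collar/winding estimate (the paper writes it directly from Lemmas \ref{lem:winding deep in a collar} and \ref{lem: difference bounded}, you obtain it from item (2) of Proposition \ref{prop: separating} plus Lemma \ref{lem: difference bounded}) combined with Lemma \ref{lem: calculus}, while the upper bound uses item (3) of Proposition \ref{prop: separating} at a comparison metric in $\mathcal{T}_{\epsilon}(\Sigma)$ with $\eta$ pinched, and the systole bound follows by inverting the decreasing involution $r$. The only cosmetic difference is your choice $\ell_{\eta}(X)\asymp \frac{1}{m+1}$ in place of the paper's $\frac{\log m}{m}$, which in fact yields a slightly sharper $4\log$ upper bound that still implies the stated $c_{*}+6\log(k-2g+1)$.
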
 

\begin{proof}   Throughout this proof, as a consequence of Lemma \ref{lem:etagoestozero}, we have chosen  $\epsilon >0$
so that  $X_{\alpha_k} \in \mathcal{T}_{\epsilon}(\Sigma)$ . 
We first prove the upper bound. 
  Let $X$ be a metric in $\mathcal{T}_{\epsilon}(\Sigma)$ where $\ell_{\eta}(X)=\frac{\log m}{m}$. 
Using the upper bound in item (3) from Proposition \ref{prop: separating} we have

\begin{align} \label{eq: alphak upper bound}
m_{\alpha_{k}} \leq  \ell_{\alpha_{k}}(X) 
\leq
2D+ 3K + 2 r\left(\frac{\log m}{2m}\right)+
 f_{m+1}\left(\frac{\log m}{m}\right)
\end{align}
\begin{align}\label{eq: alphak upper bound2}
\leq (2D+3K)+2 r\left(\frac{\log m}{2m}\right)+2r\left(\frac{\log m}{m}\right)+2\left(\frac{\log m}{m}\right)(m+1) +2\log 2
\end{align}
\begin{align}\label{eq: alphak upper bound3}
\leq (2D+3K)+4 r\left(\frac{\log m}{2m}\right)+2\left(\frac{\log m}{m}\right)(m+1) +2\log 2
\end{align}

\begin{align} \label{eq: alphak upper bound4}
\leq  c_* + 6 \log m - 4 \log \log m \leq  c_* + 6 \log m
\end{align}
where inequality (\ref{eq: alphak upper bound2}) uses Lemma \ref{lem: difference bounded}, 
  inequality (\ref{eq: alphak upper bound4})  uses  the fact that $r(\frac{x}{2}) \sim \log \frac{1}{x}$  as $x$ goes  to $0$, and hence there exists an $M >0$ so that  \begin{align}
r\left(\frac{\log m}{2m}\right)\leq  \frac{1}{4}+\log \left(\frac{m}{\log m}\right),
\text{ for } m \geq M
\end{align}
and we have
set $c_{*}$ to be  $(2D+3K+ 2 + 2 \log 2 +1)$  in inequality  (\ref{eq: alphak upper bound4}).
Using the upper bound in inequality (\ref{eq: alphak upper bound4}) and plugging in $k-2g+1$ for $m$ we obtain the desired upper bound.

For the lower bound,  we first observe that $\alpha_{k}$ must cross the collar neighborhoods of the systole of $Y_{\alpha_k}$ and the curve $\eta$. Since $\eta$ is separating,  $\alpha_{k}$ must in addition   wind  around  
$\eta$  up to an additive constant $(k-2g+1)$-times. Accounting for both strands of $\alpha_{k}$ we have
 \begin{align}\label{eq: better lower bound on alpha}
 \begin{split}
 m_{\alpha_{k}} &= \ell_{\alpha_{k}}(X_{\alpha_k})\\
 &\geq
2r\left(\frac{\text{sys} (Y_{\alpha_k})}{2}\right) +2r\left(\frac{\ell_{\eta}(X_{\alpha_k})}{2}\right)
  + (k-2g+1) \ell_{\eta}(X_{\alpha_k}) -c_{**}
  \end{split}
  \end{align}
  
  Now using Lemma \ref{lem: calculus} we can bound the sum of the two middle terms from below by  $2 \log (k-2g+1)$ verifying the desired lower bound. 
   \end{proof}

\vskip10pt

In this section, we have proven the following. 
\begin{thm}
\label{thm: separating curve construction} Suppose  $\Sigma$ is a closed surface of genus $g \geq 2$. There exists an infinite set $\mathcal{K}$ of positive integers  and 
a collection of curve pairs $\{(\alpha_{k},\beta_{k})\}_{k \in \mathcal{K}}$ so that

\begin{enumerate}
\item  $\alpha_k$ and $\beta_k$ are each filling curves

\item $i(\alpha_k, \alpha_k)=i(\beta_k, \beta_k)=k$

\item $m_{\alpha_k}\lesssim \log k < \sqrt{k}\lesssim  m_{\beta_k}$
\end{enumerate}
\end{thm}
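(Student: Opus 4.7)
The plan is to assemble the theorem from the construction and estimates already established in Sections \ref{sec: curve family} and \ref{sec: the inf invariant and self-}. The building blocks are Lemmas \ref{lem: minimalfillingcurve} and \ref{lem:minimalandseparating}, which produce a minimal filling curve $\gamma_0$ together with a separating simple curve $\eta$ meeting it twice, and the two-parameter family $\gamma_{m,n} = \eta^m \ast \gamma_0^n$ from Section \ref{sec: curve family}. The theorem is essentially a packaging of Propositions \ref{prop: separating}, \ref{prop: bounds on beta}, and \ref{prop: bounds on alpha}.

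First I would fix $\mathcal{K} = \{(2g-1)n^2 + 2n - 1 : n \in \mathbb{N}\}$, which is infinite by construction, and for each $k \in \mathcal{K}$ set $\alpha_k := \gamma_{k-2g+1,\,1}$ and $\beta_k := \gamma_{1,n}$, where $n$ is the unique positive integer with $k = (2g-1)n^2 + 2n - 1$. Item (2) is then a direct substitution into Proposition \ref{prop: separating}(1): for $\alpha_k$ one obtains $(2g-1) + 2(k-2g+1) - (k-2g+1) = k$, and for $\beta_k$ the definition of $n$ gives $(2g-1)n^2 + 2n - 1 = k$. Item (3) combines the upper bound $m_{\alpha_k} \leq c_* + 6\log(k-2g+1)$ from Proposition \ref{prop: bounds on alpha} with the lower bound $m_{\beta_k} \geq 4c\sqrt{k/(2g-1)} - 2c/(2g-1)$ from Proposition \ref{prop: bounds on beta}, together with the elementary inequality $\log k < \sqrt{k}$ valid for all sufficiently large $k$. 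Choosing $\mathcal{K}$ (or passing to a cofinite subset thereof) large enough so that this separation holds uniformly gives the chain $m_{\alpha_k} \lesssim \log k < \sqrt{k} \lesssim m_{\beta_k}$.

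The one remaining verification is item (1), filling. Since each $\gamma_{m,n}$ with $n \geq 1$ traverses every arc of $\gamma_0$ at least once, the image of $\gamma_{m,n}$ in $\Sigma$ contains the image of $\gamma_0$. Consequently the complement of $\gamma_{m,n}$ is contained in the complement of $\gamma_0$, a disjoint union of discs, so $\gamma_{m,n}$ is filling. This observation applies equally to $\alpha_k$ and $\beta_k$. Given that the substantive coarse geometric work has already been accomplished in the earlier sections, I do not expect any serious obstacle; the hardest part is organizational, namely tracking that the constants hidden in the symbols $\lesssim$ and $\gtrsim$ genuinely depend only on the fixed data $(\Sigma, \gamma_0, \eta)$ and not on $k$, which is already the content of Propositions \ref{prop: bounds on beta} and \ref{prop: bounds on alpha}.
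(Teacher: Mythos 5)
Your proposal is structurally identical to the paper's proof: the same set $\mathcal{K}$, the same choices $\alpha_k=\gamma_{k-2g+1,\,1}$ and $\beta_k=\gamma_{1,n}$, item (2) by substitution into Proposition \ref{prop: separating}(1), and item (3) by combining the upper bound of Proposition \ref{prop: bounds on alpha} with the lower bound of Proposition \ref{prop: bounds on beta}. That assembly is exactly what the paper's proof of Theorem \ref{thm: separating curve construction} consists of.

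The one place where you add an argument that the paper leaves as ``clear by construction'' is item (1), and your justification is not valid as stated. Filling is a property of the free homotopy class --- equivalently, $i(\gamma,\delta)>0$ for every essential simple closed curve $\delta$, or the complement of a \emph{minimal position} representative consists of discs --- whereas your argument only uses that the image of one particular representative of $\gamma_{m,n}$ contains the image of $\gamma_0$. That containment by itself proves nothing: the nullhomotopic loop $\gamma_0\ast\gamma_0^{-1}$ also has image containing $\gamma_0$, yet it is certainly not filling, and more generally the complementary regions of your chosen (non-minimal) representative need not be discs nor certify anything about the homotopy class. A correct justification must use the specific form $\eta^{m}\ast\gamma_0^{n}$ with positive exponents: for example, check that the explicit representative assembled from the local patterns of Figure \ref{fig:intersection pattern} contains no monogons or bigons, so it is in minimal position by Hass--Scott, and its complementary components are discs (this is what the paper is implicitly relying on when it computes $i(\gamma_{m,n},\gamma_{m,n})$ in Proposition \ref{prop: separating}(1)); alternatively, show directly that $i(\gamma_{m,n},\delta)>0$ for every essential simple closed curve $\delta$. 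This is a minor gap --- the conclusion is true and the paper itself does not spell it out --- but the general principle you invoked is false, so the step needs to be repaired rather than cited. Everything else in your write-up matches the paper's argument.
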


\begin{proof} Item (1) is clear {by construction} and item  (2) follows  from Proposition 
\ref{prop: separating}.
The proof of item (3)  follows from Propositions \ref{prop: bounds on beta} and \ref{prop: bounds on alpha}.

 \end{proof}

\begin{rem}
 We remark that the above construction holds for a punctured surface of Euler characteristic less than $-1$. Details are left for future investigation. 
 
\end{rem}

\section{The Inf spectrum and final remarks}
\label{sec: Final Remarks}
In this section, we consider the set of all filling curves  on a topological surface.
\vskip10pt

\noindent \textbf{The Inf Spectrum:}
Recall that for $\gamma$ a filling curve,  its orbit under the mapping class group  is denoted  $[\gamma]$, and  its inf invariant by   $m_{[\gamma]}$. Associated to each topological surface $\Sigma$ of genus $g\geq 2$,  we
 define the {\it \text{inf}  spectrum of } $\Sigma$  to be
 $$
 \mathcal{I}Spec (\Sigma) := \{m_{[\gamma]}: \gamma \text{ filling on } \Sigma \}
 $$

We next provide coarse bounds on the {\it inf spectrum}.

\begin{thm}\label{thm: inf spec bounds}
   For any $L>0$, the set

$$\{ [\gamma]:\gamma \text{ filling on }\Sigma, m_{\gamma} \leq L \}  \text{ is finite.}$$

 More concisely,  there exists $L_{0}>0$ so that 

\begin{equation}\label{eq: coarse bounds}
e^{bL}\leq
|\{ [\gamma]:\gamma \text{ filling on }\Sigma,m_{\gamma}\leq L \}|
\leq e^{ce^{L}} 
\end{equation}
for all  $L \geq L_{0}$.
Where $b,c>0$ are constants that only depend on the Euler characteristic $\chi :=\chi(\Sigma)$.
\end{thm}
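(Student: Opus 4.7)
The plan is to decompose the double inequality into independent upper and lower bounds, each relying on a different counting result from the literature. The upper bound (and hence the finiteness statement) will pass through self-intersection numbers via Basmajian's inequality, while the lower bound will come from counting filling closed geodesics on a single reference hyperbolic structure.

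For the upper bound and finiteness, I would first take any filling curve $\gamma$ with $m_\gamma \leq L$ and observe that on its optimal metric $X_\gamma$ the geodesic representative of $\gamma$ has length exactly $m_\gamma \leq L$. The universal self-intersection inequality of \cite{basmajian2013universal}, which asserts that a geodesic of length less than $\tfrac{1}{2}\log(k/2)$ has at most $k$ self-intersections, then forces $i(\gamma,\gamma) \leq 2e^{2L}$. Next I would invoke the quantitative form of the Cahn \cite{cahn2018mapping} and Sapir \cite{sapir2016orbitsnonsimpleclosedcurves} bounds on the number of MCG orbits of closed curves with at most $k$ self-intersections. Finiteness of our set is then immediate for each fixed $L$, and composing the two inequalities gives an upper bound of the claimed shape $e^{c e^{L}}$ for $L \geq L_0$, with $c$ depending only on $\chi(\Sigma)$ after absorbing polynomial and logarithmic factors into the exponent for $L$ large enough.

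For the lower bound, I would fix any reference hyperbolic structure $X_0 \in \mathcal{M}(\Sigma)$. Since $m_\gamma = \inf_{X} \ell_\gamma(X) \leq \ell_\gamma(X_0)$, every distinct MCG orbit of a filling closed geodesic of $X_0$-length at most $L$ injects into our counting set. Applying the counting theorem of Aougab and Souto \cite{souto2018counting} for the growth of MCG orbits of closed geodesics of bounded length on a fixed hyperbolic surface yields an exponential lower bound of the form $e^{bL}$, with $b = b(\chi) > 0$ and valid for $L \geq L_0$. Since $X_0$ is fixed, the implicit constants depend only on the topology of $\Sigma$.

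The main obstacle will be calibrating constants in the upper bound: composing Basmajian's bound (which is already exponential in $L$) with Sapir/Cahn's orbit count must land on the shape $e^{c e^{L}}$ exactly as stated, which requires some care to absorb the logarithmic and polynomial factors coming out of \cite{sapir2016orbitsnonsimpleclosedcurves} into the constant $c$ for $L \geq L_0$. A secondary subtlety is that the Aougab--Souto count must be restricted to \emph{filling} orbits. This can be handled by subtracting off non-filling contributions: every non-filling orbit is confined to an essential proper subsurface of strictly larger Euler characteristic, of which only finitely many topological types exist, and on each such subsurface the corresponding exponential counting function has a strictly smaller base than the full count on $\Sigma$, so the filling orbits dominate for $L$ beyond a threshold $L_0$.
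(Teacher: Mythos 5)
Your upper bound follows the paper's route in spirit (optimal-metric length $\leq L$ forces $i(\gamma,\gamma)\leq 2e^{2L}$ via the universal inequality of \cite{basmajian2013universal}, then count orbit types with bounded self-intersection), except that the paper gets the quantitative count from the upper bound in Theorem 1.1 of \cite{souto2018counting}, whose shape (exponential in $\sqrt{K}$) is exactly what produces $e^{ce^{L}}$ after substituting $K\leq 2e^{2L}$. If you instead quote the Cahn/Sapir bounds you must check their exponent: a bound that is exponential in $K$ rather than in $\sqrt{K}$ composes to $e^{ce^{2L}}$, and the discrepancy between $e^{L}$ and $e^{2L}$ in the inner exponent cannot be absorbed into the constant $c$. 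This part is fixable simply by citing the Aougab--Souto upper bound, as the paper does.

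The genuine gap is in your lower bound. The paper does not fix a reference metric: it uses Theorem 1.2 of \cite{souto2018counting}, which produces for each filling $\gamma$ a $\gamma$-dependent hyperbolic structure $X$ with $\ell_{\gamma}(X)\leq c_{1}\sqrt{i(\gamma,\gamma)}$, so that $m_{\gamma}\leq c_{1}\sqrt{i(\gamma,\gamma)}$; combined with the lower bound of their Theorem 1.1 on the number of filling curve types with at most $L^{2}/c_{1}^{2}$ self-intersections, this gives $e^{bL}$ orbits with $m_{\gamma}\leq L$. Your proposal instead fixes $X_{0}$ and asserts an ``Aougab--Souto counting theorem for the growth of MCG orbits of closed geodesics of bounded length on a fixed hyperbolic surface''; no such statement is in that paper, and deducing it from the prime geodesic theorem would require a subexponential bound, uniform over all mapping class group orbits, on the number of geodesics of $X_{0}$-length at most $L$ lying in a single orbit --- precisely the kind of statement you would have to prove, and essentially equivalent in difficulty to the theorem being established. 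Your proposed repair of the filling restriction has a similar problem: a non-filling geodesic lies in some proper essential subsurface, but there are infinitely many isotopy classes of such subsurfaces (only finitely many up to the mapping class group), so bounding the non-filling contribution ``per topological type with a strictly smaller base'' again hides an orbit-counting problem rather than a finite union bound. As written, the lower half of the inequality is not established; replacing the fixed-metric argument by the curve-dependent metric of Aougab--Souto Theorem 1.2 closes the gap.
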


\begin{proof}
Given $\gamma$ a filling curve, it is a result of  Theorem 1.2 in \cite{souto2018counting} that there exists a hyperbolic structure   $X$   on $\Sigma$ so that 
$$
\ell_{\gamma}(X) \leq c_{1} \sqrt{i(\gamma,\gamma)}
$$
where $c_{1}$ is an explicit constant that only depends on 
$\chi$.
Now  from the lower bound in Theorem 1.1 of  \cite{souto2018counting} there is an explicit constant
$b=b(|\chi|)>0$  so that 
\begin{align*}
e^{bL} &  \leq  |\{[\gamma] : \gamma  
  \text{ filling on }\Sigma, i(\gamma, \gamma)  \leq
  \frac{L^{2}}{c_{1}^{2}}\}| \\
& \leq |\{ [\gamma]:\gamma \text{ filling on }\Sigma, 
c_{1} \sqrt{i(\gamma,\gamma)} \leq L \}| \\
&\leq 
|\{ [\gamma]:\gamma \text{ filling on }\Sigma, \ell_{\gamma}(X_{\gamma}) \leq L \}|
\end{align*}
where the last inequality follows from 
$m_{\gamma}=\ell_{\gamma}(X_{\gamma})\leq \ell_{\gamma}(X)\leq c_{1} \sqrt{i(\gamma,\gamma)}$.
This proves the left-hand  inequality of expression (\ref{eq: coarse bounds}).

For the right-hand inequality of expression (\ref{eq: coarse bounds}),
 we note that for $\gamma$
a filling curve  it follows from \cite{basmajian2013universal} that 
$$\ell_{\gamma}(X_{\gamma}) \geq \frac{1}{2}
\log{\frac{i(\gamma,\gamma)}{2}}.
$$
Hence
\begin{align*}
&|\{ [\gamma]:\gamma \text{ filling on }\Sigma, \ell_{\gamma}(X_{\gamma}) \leq L \}| & \\
    \leq  &  |\{ [\gamma]:\gamma \text{ filling on }\Sigma, \frac{1}{2}
\log{\frac{i(\gamma,\gamma)}{2}} \leq L \}|& \\
= & |\{ [\gamma]:\gamma \text{ filling on }\Sigma, 
i(\gamma,\gamma) \leq 2e^{2L} \}| \\
\leq & e^{ce^{L}} 
\end{align*}
   where $c=c(|\chi |)$, and the last inequality follows from 
the upper bound in Theorem 1.1 of \cite{souto2018counting}.
\end{proof}

\noindent \textbf{Final remarks:}  

 %%%%%%%%%%FIGURE;%%%%%%%%%%%%

%%%%%%FIGURE%%%%%%%%%%%%%The integer $n$ ranges from 
\vskip5pt
\begin{itemize}
    \item \textbf{Question 1:} Consider the two parameter family of curves $\gamma_{m,n} =\eta^{m}\ast\gamma_{0}^{n}$. Do these curves have different inf invariants?
    
    \item \textbf{Question 2:} How do the set of curves  $\{\gamma_{m,n}: {m,n \in  \mathbb{N}}\}$, viewed as currents,  sit inside the space of currents. Is there a natural subspace for which these curves form a lattice.
    
    \item \textbf{Question 3:} Given $k \in \mathbb{N}$, define $N(k)$ to be the number of distinct curves with self-intersection number $k$ having different inf invariants. Determine the growth rate of $N(k)$.

    \end{itemize}

%%%%%%%%%%%%%Designer metrics%%%%%%%%%%%

%%%%%%%%%%%%%%%%%%%%%%%%

\newpage

\bibliographystyle{plain}
\bibliography{REFERENCES}

\end{document}